\documentclass[oneside,english]{amsart}
\ifx\HCode\UnDef\else\hypersetup{tex4ht}\fi 

\usepackage[T1]{fontenc}

\pagestyle{plain}
\setcounter{tocdepth}{1}
\synctex=-1
\usepackage{mathtools}

\usepackage{amsthm,amsmath}
\usepackage{amssymb}
\usepackage{esint}
\usepackage[pdfstartview=FitH,colorlinks=true]{hyperref}
\usepackage{datetime}
\usepackage[alphabetic,initials]{amsrefs}
\usepackage{amsaddr}

\usepackage{tikz}
\usetikzlibrary{plotmarks}
\usetikzlibrary{external}
\usetikzlibrary{patterns}
\usepackage{verbatim}
\usepackage{graphicx}

\DeclareMathOperator{\goodb}{GOOD}
\DeclareMathOperator{\badb}{BAD}

\makeatletter
\numberwithin{equation}{section}
\numberwithin{figure}{section}
\theoremstyle{plain}
\newtheorem{thm}{\protect\theoremname}[section]
  \theoremstyle{definition}
  \newtheorem{defn}[thm]{\protect\definitionname}
  \theoremstyle{plain}
  \newtheorem{lem}[thm]{\protect\lemmaname}
  \theoremstyle{plain}
  \newtheorem{prop}[thm]{\protect\propositionname}
  \theoremstyle{remark}
  
  \theoremstyle{plain}
  \newtheorem{cor}[thm]{\protect\corollaryname}
  \theoremstyle{remark}
  \newtheorem{rem}[thm]{\protect\remarkname}
  \theoremstyle{plain}
  
  \theoremstyle{plain}
  \newtheorem{crit}[thm]{\protect\critname}
  
  \theoremstyle{plain}
  \newtheorem*{fact*}{\protect\factname}

\makeatother

  \providecommand{\corollaryname}{Corollary}
  \providecommand{\definitionname}{Definition}
  \providecommand{\lemmaname}{Lemma}
  \providecommand{\notationname}{Notation}
  \providecommand{\propositionname}{Proposition}
  \providecommand{\remarkname}{Remark}
  \providecommand{\theoremname}{Theorem}
  \providecommand{\factname}{Fact}
  \providecommand{\ackname}{Acknowledgement}
  \providecommand{\critname}{Condition}

\begin{document}

\global\long\def\asigma{\mathscr{A}}
\global\long\def\aesigma{\mathscr{A}_{E}}
\global\long\def\plmu{\mu_{\lambda}^{\aesigma,\omega}}
\global\long\def\condmu{\mu_{\omega}^{\aesigma}}
\global\long\def\aroots{\mathcal{A}_{\left\{  \pm1,0\right\}  }}
\global\long\def\singu{S_{\perp}}
\global\long\def\pl{\Phi_{\lambda}}
\global\long\def\lpi{\pi_{\lambda}}
\global\long\def\tmu{\tilde{\mu}}
\global\long\def\Aai{A_{\infty}^{\alpha}}
\global\long\def\Aan{A_{n}^{\alpha}}
\global\long\def\supp{\mathrm{supp}}
\global\long\def\next{\mathrm{next}}
\global\long\def\match{\mathrm{match}}
\global\long\def\suggs{\mathrm{suggs}}
\global\long\def\Leb{\mathrm{\mathscr{L}eb}}
\global\long\def\dLeb{\mathrm{d\mathscr{L}eb}}
\global\long\def\Var{\mathrm{Var}}
\global\long\def\intervals{\mathrm{Int}}
\global\long\def\thetainp{\theta_{\mathrm{input}}}
\global\long\def\thetaoutp{\theta_{\mathrm{output}}}
\global\long\def\Nup{N^{\mathrm{upper}}}
\global\long\def\Nlow{N^{\mathrm{lower}}}
\global\long\def\nup{n^{\mathrm{upper}}}
\global\long\def\nlow{n^{\mathrm{lower}}}
\global\long\def\rwrs{\mathrm{RWRS}}
\global\long\def\red{\mathrm{red}}
\global\long\def\len{\mathrm{len}}
\global\long\def\regwalks{\mathcal{R}}
\global\long\def\walks{\mathcal{W}}
\global\long\def\sinwalks{\walks_{\mathrm{s}}}
\global\long\def\pbin{\Psi_{\mathrm{Bin}}}
\global\long\def\coloneqq{:=}
\global\long\def\regwalksall{\mathcal{R}_{\ge}^{\mathrm{all}}}

\title{Scenery Reconstruction for Random Walk on Random Scenery Systems}

\author{Tsviqa Lakrec}
\address{Einstein Institute of Mathematics\\
               The Hebrew University of Jerusalem\\
                Edmond J. Safra Campus, Jerusalem, 91904, Israel}

\date{\today}\thanks{This work is part of the author's PhD thesis. The author acknowledges support from the ISF grant 891/15 and ERC 2020 grant HomDyn 833423.}
\begin{abstract}
Consider a simple random walk on $\mathbb{Z}$ with a random coloring of $\mathbb{Z}$. 
Look at the sequence of the first $N$ steps taken in the random walk, together with the colors of the visited locations. We call this the record.
From the record one can deduce the coloring of of the interval in $\mathbb{Z}$ that was visited, which is of size approximately $\sqrt{N}$. 
This is called scenery reconstruction. 
Now suppose that an adversary may change $\delta N$ entries in the record that was obtained. What can be deduced from the record about the scenery now?
In this paper we show that it is likely that we can still reconstruct a large part of the scenery.

More precisely, we show that for any $\theta<0.5,\,p>0$ and $\epsilon>0$, there are $N_{0}$ and $\delta_{0}$ such that if $N>N_{0}$ and $\delta<\delta_{0}$ then with probability $>1-p$ the walk is such that we can reconstruct the coloring of $>N^{\theta}$ integers in the scenery, up to having a number of suggested reconstructions that is less than $2^{\epsilon s}$, where $s$ is the number of integers whose color we reconstruct.

\end{abstract}

\maketitle
\tableofcontents{}

\section{Introduction}

Consider a simple random walk over the integers. Denote by $\omega$ the steps of the walk.
We may look at $\omega$ as a series of $\omega_{t}\in\left\{\pm 1\right\}$ 
for $t\in\left\{0,1,\dots\right\}$. Let the integers be colored in a random i.i.d way, 
for some set of colors that we will refer to as the alphabet, $C$. We refer to this random coloring 
as the scenery. Let $X^{\omega}_{t}$ denote the position of the walk after $t$ steps.

We look at the following sequence: for times $t$ from $0$ to some $N$, look at the 
color of the scenery at the position $X^{\omega}_{t}$, and at the direction of the walk 
from this position at that time, $\omega_{t}$. Note that from the record of these values for 
$0\le t < N$, one can deduce what exactly were the first $N$ steps of the walk, and what was 
the coloring of $\mathbb{Z}$ on the interval that the walk visited in the first $N$ steps. 
Informally speaking, this interval would be of length of order $\sqrt{N}$.

The problem we consider is this: An adversary changed $\delta N$ entries in the first $N$ entries 
in the record of colors and directions. We want to get as much information as possible about the 
visited scenery, and the adversary wants to confuse us as much as possible.

In our result, the information we obtain on the scenery is 
delivered in the following form: A set of partial suggestions, each of which describes some part of the walk and visited scenery prior to their modification, such that one of these suggestions agrees with the real scenery and walk
. This is best though of as an algorithm that takes the 
walk-scenery record sequence that had been edited and returns suggestions for possible 
reconstruction of the original walk-scenery sequence.

The dynamical system that produces the record is known as the $T,T^{-1}$ system, or alternatively as the random walk on random scenery system (abbreviation: RWRS). The recovery of information about the scenery from the record 
falls under the general theme of scenery reconstruction.
\begin{defn}
Let $n$ be some positive integer, and let $t$ be an integer between $0$ and $n-1$. 
For any $\omega\in\left\{\pm 1\right\}^{I}$, where $I\subset\mathbb{Z}$ is a possibly infinite interval containing $0$, define:
\begin{align*}
X_{t}^{\omega}&\coloneqq
\begin{dcases}
\sum_{i=0}^{t-1}\omega\left(i\right) &\text{ if } t>0, \\
0 &\text{ if } t=0, \\
-\sum_{i=t}^{-1}\omega\left(i\right) &\text{ if } t<0.
\end{dcases}
\end{align*}
This is called \textbf{the position of the walk $\omega$ at time $t$}.
\end{defn}

\begin{defn}
\label{defn:next}
Let $n,L$ be some positive integers, and let $i$ be an integer between $0$ and $n-1$. 
For any $\omega\in\left\{\pm 1\right\}^{\left\{0,1,\dots,n-1\right\}}$ define:
\[
\next\left(\omega,L,i\right)\coloneqq\inf\left(\left\{ j>i:X_{j}^{\omega}-X_{i}^{\omega}\in\left\{ \pm L\right\} \right\} \cup\left\{ n\right\}\right), 
\]
and for any $\omega\in\left\{\pm 1\right\}^{\mathbb{N}\cup\left\{0\right\}}$ or $\omega\in\left\{\pm 1\right\}^{\mathbb{Z}}$ define:
\[
\next\left(\omega,L,i\right)\coloneqq\inf\left\{ j>i:X_{j}^{\omega}-X_{i}^{\omega}\in\left\{ \pm L\right\} \right\}.
\]
\end{defn}
(see Figure \ref{fig:next2}).
\medskip

\begin{figure}[h]
\tikzsetnextfilename{illustration0}
\begin{tikzpicture}[y=.3cm, x=.3cm,font=\sffamily]
 		\draw[->] (0,0) -- coordinate (x axis mid) (26,0);
    \draw[<->] (0,-6) -- coordinate (y axis mid) (0,6);
                    		\node[left=4cm] at (x axis mid) {$\left(t_0,0\right)$};
	\node[right=4.25cm,above=1.5cm] at (x axis mid) {$\left(\mathrm{next}\left(\omega,L,t_0\right),+L\right)$};
	\node[below=0.25cm,right=1cm] at (x axis mid) {$t$};
	\node[above=2.25cm] at (y axis mid) {$X^{\omega}_{t}$};
	\node[above=1.5cm,left=0.5cm] at (y axis mid) {$+L$};
	\node[below=1.5cm,left=0.5cm] at (y axis mid) {$-L$};
	\draw [fill] (0,0) circle [radius=0.2];
	\draw [fill] (25,5) circle [radius=0.2];

		\draw plot[mark=., mark options={fill=white}] 
		file {next_graph.data};
        
        \foreach \y in {-5,0,...,5}
    	\draw[-,dotted] (0,\y) -- (26,\y);

\end{tikzpicture}
\caption{The function $\next$ from Definition \ref{defn:next}}
 \label{fig:next2}
\end{figure}

Throughout this paper, we will use a series of parameters $L_{1},\dots,L_{k}$, which are positive integers. The exact values of these parameters will only be given later (see Definition \ref{defn:sinwalks}). 
Until then we will just collect the different constraints that this sequence needs to satisfy. 
The value $k$ is intentionally flexible, so that we will be able to use our method for arbitrarily long walks (which up to technicalities means arbitrarily large $k$). 
How to get the value of $k$ for some length is described in the proof of Theorem \ref{thm:main_theorem}.
\begin{defn}
For our fixed value $k$ and our fixed sequence of $L_{i}$-s, define the set of walks:
\[
\walks\coloneqq\left\{
	\omega \in {\left\{\pm 1\right\}}^{\left\{0,1,\dots,n-1\right\}}:
	\begin{array}{l}
	n\in\mathbb{N},\enskip \left|X^{\omega}_n\right|=L_{1}\cdots L_{k}, \\
	\text{and } 
	\next\left(\omega,L_{1}\cdots L_{k},0\right)=n
	\end{array}
\right\}.
\]
This is the set of finite sequences of $\pm1$-s that end at the first time that they are $L_{1} \cdots L_{k}$ places away from the origin. 
This set is endowed with the probability measure defined by $\Pr\left(\omega\left(i\right)=1\right)=\Pr\left(\omega\left(i\right)=-1\right)=\frac{1}{2}$ independently.
\end{defn}
We also use the following notation:
\begin{defn}
For $\omega\in\left\{\pm 1\right\}^{\left\{0,1,\dots,n-1\right\}}$, denote $\len\left(\omega\right)=n$.
\end{defn}

\begin{defn}
\label{def:test}
For $N,l\in\mathbb{N}$, a \textbf{test over length $N$ of size $l$} is a sequence of tuples $\left(\left(t_{i},\Delta_{i}\right)\right)_{i=1}^{l}$ 
such that for each $i$, $0\le t_{i} <N$ and $-N<\Delta_{i}<N$ are integers.

\noindent A \textbf{test over length $N$} is a test over length $N$ of size $l$ for some $l$, and a \textbf{test} is a test over length $N$ for some $N$. 
The size of a test $\lambda$ is denoted by $\left|\lambda\right|$.
\end{defn}
\begin{defn}
\label{def:passes_test}
Let $\lambda = \left(\left(t_{i},\Delta_{i}\right)\right)_{i=1}^{l}$ be a test over length $N$ of size $l$. 
Let $\sigma\in C^{\mathbb{Z}}$ be a coloring of the integers.
Let $x'\in\left\{\pm1\right\}^{N}\times C^{N}$ be a record of a RWRS, and denote by $\omega'$ its projection to $\left\{\pm1\right\}^{N}$ (the walk record) 
and $\left( x'\left(t\right)_{2} \right)_{t=0}^{N-1}$ its projection to $C^{N}$ (the scenery record).
We say that \textbf{$x'$ passes the test $\lambda$ relative to $\sigma$} if for any $i\in\left\{1,\dots,l\right\}$ and any 
$t\in\left\{t_{i},\dots,\next\left(\omega',L_{1},t_{i}\right)\right\}$ 
it holds that $\sigma\left(X^{\omega'}_{t}+\Delta_{i}\right) = x'\left(t\right)_{2}$.
\end{defn}
If $x'$ passes $\lambda$ relative to $\sigma$, and we know $x'$ and $\lambda$, then this means that we can get partial information on $\sigma$ from $x',\lambda$, or in other words, partially reconstruct $\sigma$ from them. Definition \ref{def:size of reconstructed scenery} measures exactly how much we can reconstruct using $\lambda$ and $x'$.
\begin{defn}
\label{def:size of reconstructed scenery}
Given a test $\lambda=\left(\left(t_{i},\Lambda_{i}\right)\right)_{i=0}^{l}$ over length $N$, and $\omega\in\left\{\pm 1\right\}^N$, the \textbf{size of reconstructed scenery} of $\lambda,\omega'$ is defined by:
\[
\left|\lambda\left(\omega'\right)\right| = 
\left|\bigcup_{i=1}^{l} \left\{X^{\omega'}_{t}+\Delta_{i} : 
	t\in\left\{t_{i},\dots,\next\left(\omega',L_{1},t_{i}\right)\right\}
\right\}\right|.
\]
\end{defn}

Now we can state the main result of this paper:
\begin{thm}[Main Theorem]
\label{thm:main_theorem}
Let $p,\epsilon>0$, $\theta<\frac{1}{2}$. There exist $N_{0}$ and $\delta>0$ such that for any $N>N_{0}$ there exist:
\begin{enumerate}
\item
a set $\sinwalks\subset\left\{\pm 1\right\}^{N}$ with $\Pr\left(\sinwalks\right)>1-p$,
\item
a set $\Lambda$ of tests over length $N$,
\end{enumerate}
so that the following holds. If $x=\left(\left(\omega\left(t\right),\sigma\left(X^{\omega}_{t}\right)\right)\right)_{t=0}^{N-1}\in\left\{\pm 1\right\}^{N}\times C^{N}$, 
where $\omega\in\sinwalks$ and $\sigma\in C^{\mathbb{Z}}$, 
and $x'\in\left\{\pm 1\right\}^{N}\times C^{N}$ is so that 
$\left|\left\{0\le t < N : x\left(t\right) \neq x'\left(t\right) \right\}\right| < \delta N$, 
then there exists $\lambda_{\mathrm{passed}} \in \Lambda$ that satisfies:
\begin{enumerate}
\item
$x'$ passes the test $\lambda_{\mathrm{passed}}$ relative to $\sigma$,
\item
$\left|\lambda_{\mathrm{passed}}\left(\omega'\right)\right| \ge N^{\theta}$, 
where $\omega'$  is the projection of $x'$ to $\left\{\pm 1\right\}^{N}$,
\item
$\ln\left|\Lambda\right| < \epsilon \left|\lambda_{\mathrm{passed}}\left(\omega'\right)\right|$.
\end{enumerate}
\end{thm}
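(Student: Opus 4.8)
The plan is to build a hierarchical reconstruction scheme indexed by the scale parameters $L_1,\dots,L_k$, reconstructing the scenery ``bottom up'' and controlling at each level both the number of surviving candidate reconstructions and the probability that the walk $\omega$ behaves well. At the finest scale, I would partition the first $N$ steps of the walk into the excursions delimited by $\next(\omega,L_1,\cdot)$: each such block of steps visits an interval of length roughly $L_1$, and for a typical walk most of these blocks are traversed in a ``regular'' way (the walk crosses the $L_1$-interval without too many long detours), so that the scenery record on that block, \emph{if unedited}, literally reads off the colors of that interval. The set $\regwalks$ of regular walks, and more importantly the regularity at each scale $L_i$, is where the probabilistic input enters: a union bound over $i=1,\dots,k$ of the (small, geometrically controlled) probabilities of irregularity at scale $L_i$ gives the bound $\Pr(\sinwalks)>1-p$, provided $N_0$ is large enough and the $L_i$ grow appropriately; this is essentially the content hidden in Definition~\ref{defn:sinwalks}, which I would invoke.

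The key combinatorial step is the recursion. Suppose inductively that at scale $L_1\cdots L_{j}$ we have a family of tests, each reconstructing a contiguous piece of scenery of length $\approx L_1\cdots L_{j}$, with the number of candidate reconstructions for a given true piece bounded by $2^{\epsilon_j\,(\text{length})}$. To pass to scale $L_1\cdots L_{j+1}$: a regular excursion at scale $L_{j+1}$ is itself composed of $O(L_{j+1})$ sub-excursions at scale $L_{j}$, and the walk's \emph{combinatorics} on this excursion (the sequence of which $L_j$-sub-block it is in at each moment, i.e. the reduced walk $\red$) can be read from the edited walk-record $\omega'$ except on the at most $\delta N$ corrupted coordinates; since corruptions are globally $\delta N$-sparse, only a $\delta$-fraction of sub-blocks are affected in a typical excursion. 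On each uncorrupted $L_j$-sub-block the inductive hypothesis supplies a consistent reconstruction, and the regular geometry lets us \emph{glue} adjacent sub-blocks: because a regular excursion revisits the boundary between consecutive $L_j$-blocks, two sub-reconstructions must agree on an overlap, which pins down their relative offset $\Delta$. The corrupted sub-blocks are then filled in by trying all $|C|^{O(L_j)}$ possibilities, but since there are only $O(\delta L_{j+1})$ of them per excursion, this costs a factor $|C|^{O(\delta L_1\cdots L_{j+1})}=2^{O(\delta)\cdot(\text{length})}$ in the number of candidates. Choosing $\delta=\delta(\epsilon)$ small and the number of scales $k$ as a slowly growing function of $N$ (so that $L_1\cdots L_k$ stays below $N^{1/2}$ but above $N^{\theta}$), the accumulated exponential rate $\sum_j \epsilon_j$ stays below $\epsilon$, giving conclusion (3), while the length $|\lambda_{\mathrm{passed}}(\omega')| \ge L_1\cdots L_k \ge N^{\theta}$ gives (2). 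Conclusion (1)—that $x'$ actually passes the assembled test relative to the true $\sigma$—is built into the construction, since at every stage we only retained reconstructions consistent with the observed (possibly edited) record, and the true scenery is always among them.

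I expect the \textbf{main obstacle} to be the gluing/alignment step under adversarial corruption: an adversary will concentrate edits at the boundaries between $L_j$-blocks to break the overlap-matching that fixes the offsets $\Delta_i$, thereby multiplying the number of plausible global alignments. Handling this requires that a \emph{regular} walk visit each inter-block boundary many times (so the adversary cannot corrupt all visits with a sparse budget), and a careful accounting showing that the number of ``ambiguous gluings'' is subexponential in the reconstructed length; this is where most of the real work lies, and where the precise definition of the regularity event $\sinwalks$ and of $\regwalks$ must be reverse-engineered to make the counting close. A secondary technical point is bookkeeping the dependence of $\epsilon_j$, $\delta$, $k$, and $N_0$ on the target $(\epsilon,\theta,p)$ so that all constraints are simultaneously satisfiable; I would set these at the very end, after all the inequalities needed at each scale have been collected, mirroring the paper's stated convention of ``collecting constraints'' on the $L_i$ before fixing them in Definition~\ref{defn:sinwalks}.
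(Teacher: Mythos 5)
Your proposal captures the paper's high-level architecture --- a multi-scale hierarchy indexed by $L_1,\dots,L_k$ built on the $\next$ function and reduced walks, a regularity event whose probability is controlled by a union bound over scales, and a recursion that simultaneously bounds the number of tests and the reconstructed length --- but the central combinatorial mechanism you propose is not the one the paper uses, and you yourself correctly flag it as the weak point. Your scheme reconstructs a \emph{contiguous} stretch of scenery by gluing overlapping $L_j$-blocks, inferring each relative offset $\Delta$ from agreement on the overlap, and brute-force guessing the colors of corrupted blocks to keep the chain unbroken. Two things go wrong. First, as you anticipate, a sparse adversary can target the overlap regions, and the promised accounting showing that the number of ambiguous gluings stays subexponential is exactly what does not close; nothing in the regularity of the walk prevents the adversary from severing a few carefully chosen boundaries and fragmenting the alignment. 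Second, and more fundamentally, ``filling in'' a corrupted block with guessed colors is incompatible with Definitions~\ref{def:test} and~\ref{def:passes_test}: a test $\lambda$ is a finite list of record positions $t_i$ and scenery offsets $\Delta_i$ at which $x'$ must \emph{literally read off} $\sigma$ correctly, so there is no slot in the formalism for asserting content that the record does not contain.

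The paper sidesteps both issues by making the reconstruction \emph{non-contiguous} and by never inferring alignments from the data. In Proposition~\ref{prop:existence_of_satisfied_test}, at each scale $m$ one selects only $B_m<L_m$ sub-intervals whose scenery ranges are \emph{pairwise disjoint}; corrupted sub-blocks are simply discarded, and no gluing occurs. The final reconstructed set is a union of $B_2\cdots B_k$ disjoint $L_1$-length intervals whose total size, by Lemma~\ref{lem:how_much_psila}, still exceeds $N^{\theta}$. The offsets $\Delta_i$ are not relative shifts pinned down by overlap matching; they are the drifts $X^{\omega}_t - X^{\omega'}_t$ induced by errors in the walk channel, and the set $\Lambda_m$ of Definition~\ref{def:Lambda} enumerates in advance, blindly, all admissible $(t_j,\Delta_j)$ at every level, with Proposition~\ref{prop:Bound_on_size_of_lambda} bounding its cardinality. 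That some enumerated test actually passes is then a pigeonhole argument (Lemma~\ref{inner lemma : J ratio}) using the local-time bound built into $\sinwalks$ to guarantee enough good sub-intervals with distinct scenery positions. This disjointness-and-enumeration strategy is the piece of ``real work'' your proposal leaves unresolved, and it is a genuinely different route from the overlap-matching you outline.
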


This result can be more clearly understood in these informal terms: 
Let $x$ be a record of the first $N$ steps and colors of a RWRS. For $x$, let $\omega$ be its walk and $\sigma$ the coloring of the scenery $\mathbb{Z}$. 
Our adversary changed $\delta N$ entries from this record, and we now have a corrupted record $x'$.
Then with probability $>1-p$ ($x$ is random, but the adversary's changes aren't) we can reconstruct some part of the scenery by using the ``algorithm'' $\Lambda$: 
Go over all $\lambda\in\Lambda$, and for each one construct a scenery suggestion $\sigma_{\lambda}$ by the formula $\sigma_{\lambda}\left(X^{\omega'}_{t} + \Delta_{i}\right)=x'\left(t\right)_{2}$ for any $i,t$ as in Definition \ref{def:passes_test}. 
The theorem's three conclusions respectively mean that for some $\lambda_{\mathrm{passed}}\in\Lambda$:
(1) The partial reconstruction $\sigma_{\lambda_\mathrm{passed}}$ of $\sigma$ is correct;
(2) We reconstruct over $N^{\theta}$ places in the scenery; and
(3) The number of tests in $\Lambda$ is very small compared to the amount of scenery we successfully reconstruct using $\lambda_{\mathrm{passed}}\in\Lambda$.
Note that as the scenery entropy grows smaller, the probability that $x'$ passes some test in a set of random tests of size $\Lambda$ grows higher. 
This is the motivation for the parameter $\epsilon$, which should be between $0$ and the scenery entropy.

Additionally, $p,\theta$ and $\epsilon$ can be pushed to be arbitrarily close to their limits, which are $p \ge 0,\epsilon\ge0$ and $\theta\le\frac{1}{2}$,
 at the cost of imposing stricter constraints on $N$ and $\delta$. Note that $\theta$ cannot exceed $\frac{1}{2}$, since the law of the iterated logarithm 
 guarantees that for any $\theta'>\frac{1}{2}$ the probability that $\omega\in\left\{\pm 1\right\}^{N}$ visits more than $N^{\theta'}$ places goes to zero as $N$ goes to infinity.

\subsection{Background}

The results of this paper can be viewed as part of the literature on scenery reconstruction, 
and are also closely connected to research in ergodic theory stemming from Kalikow's famous work \cite{Kalikow} on the $T,T^{-1}$ system.

Given a full record $\left(\left(\omega\left(t\right),\sigma\left(X^{\omega}_{t}\right)\right)\right)_{t=0}^{\infty}$ 
of both the random walk, and the scenery as observed along the random walk, 
it is trivial to reconstruct the original scenery, 
as long as the random walk does indeed wander through all sites (an event with probability 1). 
Given a finite segment $\left(\left(\omega\left(t\right),\sigma\left(X^{\omega}_{t}\right)\right)\right)_{t=0}^{n-1}$ 
one can reconstruct $\sigma$ along all sites visited by the random walk up to time $n$, 
which with very high probability contain an interval of size $\sim\sqrt{n}$ around the origin.

The scenery reconstruction problem deals with what can be said if one does not get the full information. 
For instance, there is a large body of literature regarding what happens if one obtains only 
$\left(\left(\sigma\left(X^{\omega}_{t}\right)\right)\right)_{t=0}^{\infty}$, 
without explicitly being given $\omega$. 
From our point of view, errors and omitting the random walk are related obstacles to reconstruction --- 
both deal with having partial information regarding the the full record of the random walk on the scenery $\left(\left(\omega\left(t\right),\sigma\left(X^{\omega}_{t}\right)\right)\right)_{t=0}^{\infty}$. 

The problem of scenery reconstruction as stated above, from a record with omitted walk channel, was the original version formulated by den Hollander and Keane \cite{den_hollander_keane} and by Benjamini and Kesten \cite{benjamini_kesten}.
Note that this question can only be resolved up to a few degrees of freedom, most importantly a reflection of the scenery $\sigma$. It was proven by Lindenstrauss in \cite{lindenstrauss_indistinguishables} that this problem could be resolved at best only up to a set of measure $0$ of sceneries (indeed, it is shown in that paper that there are uncountably many distinct sceneries that are all indistinguishable from each other for reconstruction). We also must get rid of a set of measure 0 of walks (e.g walks that do not visit any integer infinitely many times). The solution of the problem, that reconstruction is indeed possible up to those limits, was given by Matzinger in \cite{matzinger_reconstruction_three} and \cite{matzinger_phd}.

Unlike the regular scenery reconstruction problem, where the record omits the walk channel, the record in this paper does not. Instead, errors are added to the record. When errors can happen, the gap in difficulty between the problem with the steps and without them is less than it may seem, in the following sense. Informally, as the size of the alphabet $C$ grows to infinity\footnote{More precisely, as its entropy grows to infinity.}, the additional information that we get on the scenery from the steps becomes less important. For example if we see in the record  somewhere three consecutive different colors, then it is likely that the walk there was two steps in the same direction, and if we see a color recurring in close indices $i,i+2$ in the record, then it is likely that the steps went in opposite directions. A requirement of a large alphabet is a familiar assumption in the field of scenery reconstruction, e.g \cite{lowe_matzinger_2002_background}. Extension of Theorem \ref{thm:main_theorem} in this direction would be interesting, but not necessary for the motivating problem of this paper from ergodic theory.

Reconstruction method that tolerate errors in the observation record in the classical problem were studied extensively in the last 3 decades.
In \cite{matzinger_rolles_2003b_background},  Matzinger and Rolles showed that reconstruction is possible with probability $1$, if there are random i.i.d errors in the record, with sufficiently low error rate (and also allowing for bounded jumps other than $\pm1$). 
Additionally, in \cite{lember_matzinger_background} Lember and Matzinger  consider random walks in which the step sequence $\omega$ can contain steps other than $\pm1$ or $0$
and show that it is possible to recover a so-called fingerprint of the scenery\footnote{That is, some datum which is in most cases equal for a pair of records only when their sceneries are the same.}. 
A step of $+2$ for example, can be thought of as a deletion of a single entry, and greater steps are deletions of more entries. The step distribution can be chosen so that Lember and Matzinger's result becomes a result on random i.i.d deletions from the record of a simple random walk\footnote{in a subsequent work, \cite{lowe_matzinger_merkl_background}, Lember, Matzinger and Merkl showed it for another type of step distribution in the random walk that is not quite interpretable as a random error.}.

However, the aforementioned results are always based on the assumption that the errors are random, whereas we consider the harder problem of allowing an adversary to cunningly introduce errors in the worst possible way, only restricting the number of errors as a proportion of the record. 
We want to be able to reconstruct some scenery, even if the errors are placed in the most irksome and confusing places for us. As far as we know, scenery reconstruction with this assumption was not explicitly considered until now, but it was indeed implicitly handled in \cite{Kalikow}, 
which also handled adversarial deletions in addition to adversarial errors. Incorporating deletions is somewhat natural for the RWRS we consider, that includes the walk channel $\omega$ which can also contain errors, but was avoided in this paper since it is not conceptually different, but somewhat more technical. Worst-case or adversarial errors are also a topic considered in coding theory \cite{guruswami}.
Because of the different assumption on the errors, the previously cited reconstruction with errors papers are not applicable for our problem. 

Another aspect of the main result of this paper, is that from a finite record of size $N$, we reconstruct a part of the scenery of size $N^\theta$. The original scenery reconstruction problem assumed that the record is infinite, but later finite records were also considered. 
In  \cite{matzinger_rolles_2003a_background} and \cite{matzinger_rolles_2006_background}, for example, the records are finite. Since it is always possible that the finite random walk was particularly unlucky, these results have to make an additional assumption of the probability of successful reconstruction being close to $1$, as we also did.
For more details on scenery reconstruction, and further references, the reader may consult the review paper by den Hollander and Steif \cite{denHollander_Steif_review}.

From the ergodic theoretic perspective, a random walk on a random scenery can be seen as a probability measure preserving system. Recall that a measure preserving system is said to be \emph{Bernoulli} if it is equivalent to an i.i.d. process equipped with the shift map.
Bernoulli systems have very strong mixing property. Another very strong mixing condition, that is still weaker than being Bernoulli system, is for a system to be a \emph{K-system} (or Kolmogorov system; this class of systems is also known as systems with \emph{Completely Positive Entropy}). 
Meilijson proved in \cite{Meilijson} that the RWRS system is a K-system. 
In the early years of ergodic theory, it was an open question whether any K-system is also Bernoulli. 
In \cite{Kalikow}, Kalikow showed that the RWRS system, also known as the $T,T^{-1}$ system in ergodic theory, is not Bernoulli, proving that this natural system is a counter-example for that question. Note that prior to Kalikow's work a K-system that is not Bernoulli was constructed by by Ornstein in \cite{ornstein}; the suggestion that $T,T^{-1}$ might be a counter-example was made by Benjamin Weiss in \cite{benjiweiss}.

Kalikow's proof can be thought of as an application of a certain scenery reconstruction method that he introduces.
He shows that the $T,T^{-1}$ system does not satisfy a property that all systems isomorphic to a Bernoulli system must satisfy, called \emph{Very Weak Bernoulli (VWB)}. 
The VWB property for a probability measure preserving shift system $(Q^{\mathbb Z},T,\mu,\mathcal{B})$ for a finite set $Q$ says the following: 
Using the measure disintegration theorem on the measure $\mu$ for the map $Q^{\mathbb Z}\rightarrow Q^{\left\{\dots,-2,-1\right\}},\;(\omega_i)_{i\in\mathbb Z}\mapsto(\omega_i)_{i<0}$, we get a measurable map $c\mapsto\mu_c$ that takes $(c_i)_{i\in\mathbb Z}\in Q^{\mathbb Z}$ to a probability measure on $Q^{\mathbb Z}$ that depends only on $(c_i)_{i<0}$ and is supported on the subset $S_c = \left\{\omega\in Q^{\mathbb Z}:\forall i<0,\enskip \omega_i=c_i\right\}$, such that $\mu = \int \mu_c d \mu$. We call $\mu_c$ the conditional probability measure on the set $S_c$.
Now pick independently according to $\mu$ two sequences $a,b\in Q^{\mathbb Z}$,
and let $\mu'_a=\pi_{*}\mu_a,\mu'_b=\pi_{*}\mu_b$ be the measures on $Q^n$ we get by
pushing forward $\mu_a,\mu_b$ using the map $\pi:Q^{\mathbb Z}\rightarrow Q^n$ taking $(\omega_i)_{i\in\mathbb Z}$ to $(\omega_i)_{i=0}^{n-1}$. The $\bar{d}$ metric is a metric between probability measures on $Q^n$ defined as the infimum of the expectation of the Hamming distance between $x,y\in Q^n$, where $(x,y)$ is distributed according to some joining $\gamma$ of the measures.
The VWB property says that for any $\epsilon>0$, the probability that the aforementioned $a,b$ satisfy that $\bar{d}(\mu'_a,\mu'_b)>\epsilon$ goes to $0$ as $n$ goes to infinity.

Consider the VWB property in the context of the RWRS system, where $Q=\left\{\pm1\right\}\times C$. Then with probability $1$, the conditional measures on the sets $S_c$ can be thought of as conditional measures on the set of measures with the same scenery as $c$, and the same walk for $i<0$. Thus, the measures $\mu'_x,\;x=a,b,\;$ are the distributions of the first $n$ entries in the record of a RWRS for the fixed scenery~$c$.
Now assume that $\bar{d}(\mu'_a,\mu'_b)<\epsilon$. Then for most records $x$ of length $n$ in the support of $\mu'_a$, we get a that by inserting at most $O(\epsilon)n$ errors we can get a record $y$ in the support of $\mu'_b$, through some joining of them. 
Theorem \ref{thm:main_theorem}, or a weaker analogue of it that is implicitly given in \cite{Kalikow}, implies that the scenery of $x$, which is $a$, needs to be related to the scenery of $y$, which is $b$. But the probability of a random pair $(a,b)$ satisfying this is low, in contradiction, and hence the $T,T^{-1}$ system is not VWB.
The method in Kalikow's proof was further developed to $d$-dimensional walks by den Hollander and Steif \cite{denHollander_Steif_kalikowism}, and to a certain class of smooth systems by D. Rudolph in~\cite{rudolph}.

The particular problem that we look into in this paper, of scenery reconstruction with adversarial errors, is also related to (and in fact was motivated by) recent work of Austin \cite{Austin} answering a longstanding question from ergodic theory about RWRS systems: is the entropy of the scenery of a RWRS system invariant under isomorphism of measure preserving systems? Let us describe this question in some more detail.
Given two RWRS system where the sceneries have the same Kolmogorov-Sinai entropy, we know by Ornstein isomorphism theorem that there exist an isomorphism between the scenery systems, and from it we can make an isomorphism of the RWRS systems.
It is natural to ask whether the converse is true --- that is, given an isomorphism of measure preserving system between two RWRS systems, is the entropy of the sceneries of the two systems necessarily the same? This was answered in the affirmative in full generality by Austin in \cite{Austin}, and prior to that under additional assumptions by Aaronson in \cite{Aaronson}. 
It seems plausible that a stronger form of a scenery reconstruction algorithm under adversarial errors than the one we give in this paper could give a more direct approach to Austin's theorem; this line of attack is suggested by Austin in subsection 4.3 of \cite{Austin}.

\subsection{Overview of the Proof}\label{subsection:proof_overview}

The statement of Theorem \ref{thm:main_theorem} is that typically we can find a set of tests, not too big, so that one of them instructs us how to place an error-free chunk of the record into the scenery, allowing us to reconstruct that part of the scenery from the record. 

The set of tests is generated in a way inspired by Kalikow's proof, but more efficient. The idea is to look at the record through a hierarchy of scales. In our view, it presents an algorithm for reconstruction, and Theorem \ref{thm:main_theorem} shows that this algorithm succeeds with high probability in the walk (though the computational complexity of this algorithm appears to be rather poor). Suppose that we look at a part of the record of size $N$ that covers a scenery interval of size $L$, and has $p N$ adversarial errors in it. 
Then it is clear that for some $N'<N,L'<L,p'\approx p$ there exist a sub-interval of this part of the record which has size $N'$, covers a scenery interval of size $L'$, and has $p' N'$ adversarial errors in it. Kalikow noted that even for the worst possible errors, for a typical walk, there is a pair of such sub-intervals of the record, which have disjoint scenery intervals. 
In this paper, we improve the efficiency of this by packing many more sub-intervals with disjoint scenery intervals into this, so that we are able to cover $>L^{1-o(1)}$ of the scenery. Now, we can use a reconstruction method for the sub-intervals, thus reconstructing part of the scenery, and then trying every possible way to place them all together in our scenery --- it turns out, that we do not get too many possibilities for the reconstructed scenery. The reconstruction of the sub-interval is done in the same way, just with smaller numbers, and we continue so on until we get to the lowest level, when the rate of errors is $0$ and reconstruction of the sub-interval is easy.

The precise formulation of the above ``algorithm'' appears in Definition \ref{def:Lambda} of the sets $\Lambda_m$, and later in section \ref{section_Lambda} it is also proved that for a typical walk we can reconstruct a significant part of the scenery, specifically the color of $N^\theta$ of the visited integers for $\theta<0.5$ arbitrarily close to $0.5$. By a typical walk, we specifically mean any walk  $\omega\in\sinwalks$, which is defined at length in section \ref{section_sinwalks}, culminating in Definition \ref{defn:sinwalks}, and is finally shown to be a set of arbitrarily high probability in Proposition \ref{prop:prob_of_sinuosity}. The definition is quite involved, since it will be used in each level of the multilevel analysis in the ``algorithm'' above. In each level, we require several assumptions to hold, including that the size of the scenery interval covered during a record sub-interval interval of that level's size is approximately the square root of that sub-interval's size, and that the local time measure of the random walk in this sub-interval is never of unusually high value, and also the recursive definition that sufficiently many lower level sub-intervals of this sub-interval have the same typicality properties. The local time measure of a walk $\omega\in\left\{\pm1\right\}^{\left\{0,1,\dots,n-1\right\}}$ is a probability measure on $\mathbb{Z}$ that gives an integer $x$ the probability $\frac{1}{n+1}\left|\left\{0\le t\le n: X^{\omega}_t = x\right\}\right|$.

The proof of Theorem \ref{thm:main_theorem} for a walk in $\sinwalks$ appears in section \ref{section_pf_of_main_thm}, and has three parts:
\begin{enumerate}
\item The proof that some test $\lambda_\mathrm{passed}\in\Lambda$ is satisfied, assuming that the walk is in $\sinwalks$, appears in Lemma \ref{prop:existence_of_satisfied_test} and in subsection \ref{section_some_test_satisfied}. This is item (1) in Theorem \ref{thm:main_theorem}.
\item The proof that the set of tests $\Lambda$ is not too big is given in Proposition \ref{prop:Bound_on_size_of_lambda}, and touched again in subsection \ref{section_some_test_satisfied}. This is item (3) in Theorem \ref{thm:main_theorem}.
\item The proof that the test reconstructs a lot of scenery is given in Lemma \ref{lem:how_much_psila}, and also touched again in subsection \ref{section_some_test_satisfied}. This is item (2) in Theorem \ref{thm:main_theorem}.
\end{enumerate}
A more technical overview of the proof of \ref{thm:main_theorem}, elaborating on this last three points, will be given in subsection \ref{subsection_def_of_tests}.

For the first-time reader, we recommend to first read section \ref{section_sinwalks} without reading the Lemmas and the proofs, then read Proposition \ref{prop:prob_of_sinuosity}, then read section \ref{section_Lambda}, and finally return to read \ref{section_sinwalks} in detail. This will allow the reader to understand the motivation for the definitions in section \ref{section_sinwalks} first, and see the final proof, before getting into the probabilistic arguments as to why the properties in these definitions are expected to hold with high probability.

\subsection*{Acknowledgement}
This work is part of my PhD thesis, done under the direction of Elon Lindenstrauss who suggested the problem of scenery reconstruction 
and  assisted with the proof. 

Many arguments in Section~\ref{section_sinwalks} were suggested by Ori Gurel-Gurevich, 
particularly in the proofs of Lemmas \ref{lem:prob lower bound on next of walk} and \ref{lem:0inbadlocal_prob}. 
Additional useful suggestions were made by Benjy Weiss, Jon Aaronson, Zemer Kosloff, Tim Austin and Ohad Noy Feldheim.

I would also like to thank the anonymous referee for their peer review and helpful suggestions.  

\section{Construction of the Set \texorpdfstring{$\sinwalks$}{W s}}
\label{section_sinwalks}
\label{section_prob_sinuous}

In this section we define the set $\sinwalks$ from Theorem \ref{thm:main_theorem}, and bound from above $\Pr\left(\omega\notin\sinwalks\right)$.

To say whether $\omega$ is in $\sinwalks$, we pick some $L$, 
and divide $\omega$ as a sequence to intervals that begin when $X^{\omega}_{t}$ is a multiple of $L$, and end when it reaches a different multiple of $L$. This process is done for several values of $L$, a sequence of $L$-s in which each term in it is a multiple of the previous term in the sequence. As we shall define throughout Section \ref{section_sinwalks}, $\omega\in\sinwalks$ if these sequences of intervals satisfy a long list of typicality conditions.

Our construction 
is inspired by \cite{Kalikow}: 
in that paper, Kalikow uses a reconstruction that begins by dividing $\left\{0,\dots,\len\left(\omega\right)-1\right\}$ 
to intervals of a constant size and showing that for a typical walk one can find two such intervals, over which the walk is typical, errors are few, and the scenery intervals 
visited at these time intervals are disjoint. To enable us to easily see this disjointness, we divide the scenery to intervals of constant size instead, and therefore we get that the size of the intervals of time we get from it is variable (these intervals are denoted below as $I^{m'}_{m}\left(\omega\right)$). This is the origin of the function $\next$ from Definition \ref{defn:next}.

\begin{defn}
\label{defn:redwalk1}
Let $N,L$ be positive integers, and let $\omega\in\left\{\pm1\right\}^{\left\{0,1,\dots,N-1\right\}}$ be a sequence such that $L$ divides $X_{N}^{\omega}$. 
We define $i_{\omega,L}$, \textbf{the $L$-reduced embedding in $\omega$},  recursively:
\begin{multline*}
\begin{aligned}
i_{\omega,L}\left(0\right) \coloneqq & 0, \\
i_{\omega,L}\left(j+1\right) \coloneqq & \next\left(\omega,L,i_{\omega,L}\left(j\right)\right),
\end{aligned}
\end{multline*}
for any $j$ until we have $i_{\omega,L}\left(j\right)=N$.
\end{defn}
\begin{defn}
\label{defn:redwalk2}
Let $N,L$ be positive integers, and let $\omega\in\left\{\pm1\right\}^{\left\{0,1,\dots,N-1\right\}}$ be a sequence such that $L$ divides $X_{N}^{\omega}$. 
We define $\red\left(\omega,L\right)$, \textbf{the $L$-reduced embedded walk of $\omega$} as the sequence 
$\omega_{2}\in\left\{\pm1\right\}^{\left\{0,1,\dots,m-1\right\}}$ such that for $j\in\left\{0,1,\dots,m-1\right\}$ we have
\[
\omega_{2}\left(j\right) \coloneqq \frac{X^{\omega}_{i_{\omega,L}\left(j+1\right)}-X^{\omega}_{i_{\omega,L}\left(j\right)}}{L},
\]
where $\left\{0,1,\dots,m-1\right\}$ is the domain of the $L$-reduced embedding in $\omega$. 
\end{defn}
Note that $m=\len\left(\red\left(\omega,L\right)\right)$ and that $i_{\omega,L}$ is a monotone increasing function from $\left\{0,\dots,\len\left(\red\left(\omega,L\right)\right)-1\right\}$ to $\left\{0,\dots,N-1\right\}$.
Definitions \ref{defn:redwalk1} and \ref{defn:redwalk2} could be extended to infinite $\omega$-s and to negative $j$-s but this is unnecessary for our goals.

\begin{defn}
\label{defn:redwalk3}
For a finite walk $\omega\in\walks$ and an integer $0\le m \le k$ denote:
\[
\len_{m}\left(\omega\right) \coloneqq \len\left(\red\left(\omega,L_{1}\cdots L_{m}\right)\right).
\]
\end{defn}

Now we state the first of many typicality conditions that a walk $\omega\in\sinwalks$ must satisfy: $\omega\in\regwalksall$. This condition will be expressed using positive integer parameters $N_{m}$, that will be fixed later at Definition \ref{defn:sinwalks}.
\begin{defn}
\label{defn:r_all}
We denote the set of walks which have level-$m$-reduced walks of length at least 
$N$ by:
\[
\regwalks_{\ge N}^{m}\coloneqq\left\{ \omega\in\walks:\len_{m}\left(\omega\right)\ge N\right\} .
\]

Usually we will have $N=N_{m}$, and then we will define:
\[
\regwalks_{\ge}^{m} \coloneqq \regwalks_{\ge N_{m}}^{m}.
\]

Additionally, denote $\regwalksall \coloneqq \cap_{m=0}^{k}\regwalks_{\ge}^{m}$.
\end{defn}
Note that $N_{0}$ of Definition \ref{defn:r_all} is not related to $N_{0}$ of Theorem \ref{thm:main_theorem}. 
\begin{defn}
\label{defn:intervals}
We define \textbf{the $j$-th interval of level $m$ at ground level} 
of a walk $\omega$ as:
\[
I_{m,j}^{0}\left(\omega\right)\coloneqq\left\{ i_{\omega,L_{1}\cdots L_{m}}\left(j\right),\dots,i_{\omega,L_{1}\cdots L_{m}}\left(j+1\right)-1\right\}. 
\]
More generally, for $m'<m$ we define 
\textbf{the $j$-th interval of level $m$ at level $m'$} of a walk $\omega$ as:
\[
I_{m,j}^{m'}\left(\omega\right)\coloneqq\left\{ i_{\red\left(\omega,L_{1}\cdots L_{m'}\right),L_{m'+1}\cdots L_{m}}\left(j\right),\dots,i_{\red\left(\omega,L_{1}\cdots L_{m'}\right),L_{m'+1}\cdots L_{m}}\left(j+1\right)-1\right\}. 
\]
We particularly denote $I_{m,j}\left(\omega\right)\coloneqq I_{m,j}^{m-1}\left(\omega\right)$
and this is called \textbf{the $j$-th interval of level $m$}.
In the case of $m'=m$ we simply define $I_{m,j}^{m'}\left(\omega\right)\coloneqq\left\{j\right\}$.
\end{defn}

Figure \ref{fig:red_walk} illustrates the last few definitions for $L=L_{1}\dots L_{m}$:
\begin{figure}[h]
\tikzsetnextfilename{illustration1_new}
\begin{tikzpicture}[y=.13cm, x=.024cm,font=\sffamily]
    \draw[->] (0,0) -- coordinate (x axis mid) (335+15,0);
    \draw[<->] (0,-15) -- coordinate (y axis mid) (0,20);
    	\foreach \y in {-2,-1,...,3}
     		\draw (1pt,5*\y) -- (-3pt,5*\y) 
     		    node[anchor=east] {\y $L$};
    \node[right=4.5cm] at (x axis mid) {$t$};
	\node[above=2.5cm] at (y axis mid) {$X^{\omega}_{t}$};
	\draw plot[mark=., mark options={fill=white}] 
	    file {div_soft_short.data};
    \foreach \y in {-15,-10,...,15}
    	\draw[-,dotted] (0,\y) -- (335,\y);
     \draw[pattern = north east lines]  (0,0+5) rectangle (11, -5)  (56, 0+5) rectangle (75, -5)  (100, 0-5) rectangle (105, 5)  (144, 10+5) rectangle (173, 5) (222, 0+5) rectangle (263, -5)  (304, 0-5) rectangle (315, 5)  (330, 10-5) rectangle (335, 15)  ;          
     \draw[pattern = north east lines] (11,-5-5) rectangle (56, 0)  (75, -5-5) rectangle (100, 0)  (105, 5-5) rectangle (144, 10)  (173, 5+5) rectangle (222, 0)  (263, -5-5) rectangle  (304, 0)  (315, 5-5) rectangle  (330, 10);     \draw[->,blue,mark=x,thick] (0, 0) -- (11, -5) -- (56, 0) -- (75, -5) -- (100, 0) -- (105, 5) -- (144, 10) -- (173, 5) -- (222, 0) -- (263, -5) -- (304, 0) -- (315, 5) -- (330, 10) -- (335, 15) ;
\end{tikzpicture}
\caption{The reduced random walk from Definitions \ref{fig:red_walk}.}
\label{fig:red_walk}
\end{figure}

The black graph represents the walk $\omega$. 
The thick blue line represents $\red\left(\omega,L\right)$, 
but note that the vertical coordinate is multiplied by $L$ when it is overlayed 
on the graph of $\omega$, and that the horizontal coordinate is adjusted to the 
rate of $\omega$ - so one should look at whether the blue graph goes up 
or down, rather than at the $t$ for which it does. 
Here are the first values of the reduced walk in this illustration:
\[
\red\left(\omega,L\right) = 
\left(
	-1,+1,-1,+1,+1,+1,-1,-1,-1,+1,+1,\dots
\right).
\]

The rectangles represent the intervals 
$\left\{ i_{\omega,L}\left(j\right),\dots,i_{\omega,L}\left(j+1\right)-1\right\}$
, similar to the ones from Definition \ref{defn:intervals}. 
Their horizontal range is exactly this interval, 
and their vertical range is the window in which the random walk moves until 
it exits the interval by reaching 
$i_{\omega,L}\left(j+1\right) = \next\left(\omega,L,i_{\omega,L}\left(j\right)\right)$
; namely, 
$\left[
X^{\omega}_{i_{\omega,L}\left(j\right)} -L,
X^{\omega}_{i_{\omega,L}\left(j\right)} +L
\right]$.

\begin{defn}
Let $n$ be a positive integer and let $p,\bar{p}\in\left[0,1\right]$.
Let $Y\sim\mathrm{Bin}\left(n,p\right)$ be a random variable
\footnote{Recall that this means that $\Pr\left(Y=k\right)={\binom{n}{k}} p^{k} \left(1-p\right)^{n-k}$ for $k\in\left\{0,1,\dots,n\right\}$.}
. Denote:
\[
\pbin\left(n,p,\bar{p}\right)=\Pr\left(Y>\bar{p} n\right).
\]
\end{defn}
\begin{lem}
\label{lem:chernoff_pbin_bound}
For $n,p,\bar{p}$ as in the former definition and $0<t<1$, if $t\bar{p} \ge p$
 then:
\[
\pbin\left(n,p,\bar{p}\right)<\exp\left(-n\left(1-t\right)^{2}\bar{p}^{2}\right).
\]
\end{lem}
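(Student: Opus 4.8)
The statement to prove is Lemma~\ref{lem:chernoff_pbin_bound}, a Chernoff-type tail bound for the binomial: if $Y\sim\mathrm{Bin}(n,p)$, $0<t<1$, and $t\bar p\ge p$, then $\Pr(Y>\bar p n)<\exp(-n(1-t)^2\bar p^2)$.

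The plan is to apply the standard exponential (Chernoff) method. First I would write, for any $\eta>0$, the bound $\Pr(Y>\bar p n)\le e^{-\eta\bar p n}\,\mathbb{E}[e^{\eta Y}]$ by Markov's inequality applied to $e^{\eta Y}$. Since $Y$ is a sum of $n$ i.i.d.\ Bernoulli$(p)$ variables, $\mathbb{E}[e^{\eta Y}]=(1-p+pe^{\eta})^n=(1+p(e^{\eta}-1))^n\le\exp\!\big(np(e^{\eta}-1)\big)$, using $1+x\le e^x$. This gives $\Pr(Y>\bar p n)\le\exp\!\big(n(p(e^{\eta}-1)-\eta\bar p)\big)$ for every $\eta>0$. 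The task is then to choose $\eta$ so that the exponent is at most $-n(1-t)^2\bar p^2$.

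The natural choice is to optimize, which would give $e^{\eta}=\bar p/p$, i.e.\ $\eta=\ln(\bar p/p)$; note $\eta>0$ precisely because $\bar p/p\ge 1/t>1$. With this choice the exponent becomes $n\big(\bar p - p - \bar p\ln(\bar p/p)\big)$, and one wants $\bar p-p-\bar p\ln(\bar p/p)\le -(1-t)^2\bar p^2$. Rather than wrestle with the logarithm, I expect it is cleaner to pick a slightly suboptimal $\eta$ tailored to the hypothesis $p\le t\bar p$. A good candidate is $\eta$ with $e^{\eta}-1=(1-t)\bar p/p$ — wait, that may not be the crispest; alternatively just take $\eta$ small, say determined by $e^{\eta}=1/t$ or by a first-order expansion, and use the hypothesis $p\le t\bar p$ to bound $p(e^\eta-1)$ from above. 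Concretely: with the exponent $p(e^\eta-1)-\eta\bar p$, substituting $p\le t\bar p$ into the (positive) term $p(e^\eta-1)$ gives an upper bound $\bar p\big(t(e^\eta-1)-\eta\big)$, and then one chooses $\eta$ (depending only on $t$) to make $t(e^\eta-1)-\eta\le -(1-t)^2\bar p$ — but this last inequality has $\bar p$ on the right only, so it cannot hold for small $\bar p$ unless we keep a factor of $\bar p$ on the left too. So the correct bookkeeping is: keep $\eta=c\bar p$ for a constant $c$ to be chosen, expand $e^\eta-1\le\eta+\eta^2$ for $\eta$ in a suitable range (or use $e^x-1\le x+x^2$ valid for $|x|\le 1$, noting $\bar p\le 1$ forces $\eta\le c$), so that $p(e^\eta-1)-\eta\bar p\le t\bar p(\eta+\eta^2)-\eta\bar p=\eta\bar p(t-1+t\eta)=c\bar p^2(t-1+ct\bar p)$; choosing $c$ so that $t-1+ct\bar p\le-(1-t)^2/c$, e.g.\ $c=(1-t)$ together with $t\bar p\le 1$ giving $t-1+ct\bar p\le t-1+(1-t)= -(1-t)+(1-t)t\bar p\le -(1-t)^2$ when... this needs $(1-t)t\bar p\le (1-t) - (1-t)^2 = (1-t)t$, i.e.\ $\bar p\le 1$, which holds. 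So with $c=1-t$, $\eta=(1-t)\bar p\le 1$, we get $p(e^\eta-1)-\eta\bar p\le (1-t)\bar p^2\big(t-1+(1-t)t\bar p\big)\le(1-t)\bar p^2\cdot(-(1-t)^2)\le -(1-t)^2\bar p^2$ — hmm, the constant $(1-t)$ is an extra favorable factor, so the bound is actually a bit stronger than claimed, which is fine. Multiplying by $n$ and exponentiating yields the claim.

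The only mild obstacle is the elementary inequality management at the end — choosing the multiplier $c$ in $\eta=c\bar p$ and verifying $e^\eta-1\le\eta+\eta^2$ on the relevant range $0<\eta\le 1-t<1$ — but this is routine calculus, and any inefficiency in the constant is harmless since we only need the stated (weaker) bound. I would double-check the edge cases $p=0$ (trivially $\Pr(Y>0)=0$ if $\bar p>0$, and if $\bar p=0$ the hypothesis $t\bar p\ge p$ forces $p=0$ and $\Pr(Y>0)=0<1=\exp(0)$) to confirm the statement is vacuously or trivially true there, and note that strictness of the inequality ($<$ rather than $\le$) comes for free from $1+x<e^x$ for $x\neq0$ as long as $p\in(0,1)$, with the degenerate cases handled separately.
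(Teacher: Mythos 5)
There is a genuine gap, and it sits at the very last step of your exponent calculation. You set $\eta = (1-t)\bar p$ and arrive at
\[
p\left(e^{\eta}-1\right)-\eta\bar p
\;\le\;
(1-t)\,\bar p^{2}\bigl(t-1+(1-t)t\bar p\bigr)
\;\le\;
(1-t)\,\bar p^{2}\cdot\bigl(-(1-t)^{2}\bigr)
\;=\;
-(1-t)^{3}\bar p^{2},
\]
and then assert this is $\le -(1-t)^{2}\bar p^{2}$. But since $0<1-t<1$ we have $(1-t)^{3}<(1-t)^{2}$, so $-(1-t)^{3}\bar p^{2}>-(1-t)^{2}\bar p^{2}$: the extra factor $(1-t)$ is \emph{unfavorable}, not favorable as you remark, and the bound you actually obtain is $\exp(-n(1-t)^{3}\bar p^{2})$, which is strictly weaker than the lemma's claim.

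This is not a defect that a cleverer choice of $\eta$ can repair within your framework. Because you bound the moment generating function by $\mathbb{E}[e^{\eta Y}]\le\exp\!\bigl(np(e^{\eta}-1)\bigr)$ (discarding the factor $(1-p)$), even the optimal choice $\eta=\ln(\bar p/p)$ only yields the exponent $n\bigl(\bar p-p-\bar p\ln(\bar p/p)\bigr)$. Taking $r=p/\bar p\le t$, the required inequality reduces to $-1+r-\ln r\ge(1-t)^{2}\bar p$, and the worst case $r=t$ gives $-1+t-\ln t\ge(1-t)^{2}\bar p$, which fails for $\bar p$ near $1$ when $t$ is not small (for example $t=\tfrac12$ gives $-1+\tfrac12-\ln\tfrac12\approx 0.193<\tfrac14\bar p$ for $\bar p>0.77$). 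So the Poisson-style relaxation of the MGF is fundamentally too lossy for this lemma as stated over all $\bar p\in[0,1]$. The paper instead writes $Y-np$ as a sum of centered, $[-p,1-p]$-valued independent variables and invokes the Hoeffding-type bound (Alon--Spencer, Theorem~A1.18), namely $\Pr(S>a)<e^{-2a^{2}/n}$, giving $\Pr(Y>\bar p n)<\exp(-2n(\bar p-p)^{2})\le\exp(-2n(1-t)^{2}\bar p^{2})$, which is even a factor of $2$ stronger than needed. To salvage your route you would either need to keep the exact MGF and derive the Kullback--Leibler bound $\exp(-nD(\bar p\|p))$ together with Pinsker's inequality $D(q\|p)\ge 2(q-p)^{2}$, or simply cite Hoeffding directly as the paper does.
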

\begin{proof}
The proof is a straightforward application of Chernoff bound.
Denote by $Y$ a binomial random variable with $n$ independent trials with probability $p$ of success.
Let $Y_{1},\dots,Y_{n}$ be Bernoulli random variables with probability $p$, such that $Y=Y_{1}+\cdots+Y_{n}$.
Define $X_{i}=Y_{i}-p$ for each $i\in\left\{1,\dots,n\right\}$ and $S=X_{1}+\cdots+X_{n}$. The sequence $X_{i}$ is of independent random variables,
with each $\mathbb{E}\left[X_{i}\right]=0$, and no two values of $X_{i}$ are more than $1$ apart.
By the variation on Chernoff bound found at Theorem A1.18 in \cite{alon_spencer}, the statements in the last sentence regarding the $X_{i}$-s imply that for any $a>0$ 
\[
\Pr\left(S>a\right) < e^{-2a^{2}/n},
\]
and therefore:
\begin{multline*}
	\begin{aligned}
	\pbin\left(n,p,\bar{p}\right) 
	& = \Pr\left(Y > n\bar{p}\right) \\
	& = \Pr\left(Y_{1}+\cdots+Y_{n}-pn > n\bar{p}-pn\right) \\
	& = \Pr\left(\right(Y_{1}-p\left)+\cdots+\right(Y_{n}-p\left) > \left(\bar{p}-p\right)n\right) \\
	& = \Pr\left(X_{1}+\cdots+X_{n} > \left(\bar{p}-p\right)n\right) \\
	& = \Pr\left(S > \left(\bar{p}-p\right)n\right) \\
	& < \exp\left(-2\left(\left(\bar{p}-p\right)n\right)^{2}/n\right) \\
	& = \exp\left(-2n\left(\bar{p}-p\right)^{2}\right) \\
	& < \exp\left(-2n\left(1-t\right)^{2}\bar{p}^{2}\right).
	\end{aligned}
\end{multline*}
\end{proof}

\subsection{Abnormally Long Intervals}

\begin{rem}
\label{rem:implicit_parameters}
By $M^{\mathrm{upper}}_{m}$ and $R^{\mathrm{upper}}_{m}$ where $m\in\mathbb{N}$, we denote parameters for whom values will be specified only at Definition \ref{defn:sinwalks}.
The sets $\badb_{m}^{\mathrm{upper}}\left(\omega\right)$ that we are about to define are defined using them, but since it would be cumbersome to denote the set by 
$\badb_{m}^{\mathrm{upper}}\left(\omega,M^{\mathrm{upper}}_{m}\right)$, we will leave this parameter out of the notation.

Later in this section we will define other sets with similar notation, which will depend on more parameters such as 
$M^{\mathrm{lower}}_{m}$, $R^{\mathrm{lower}}_{m}$ and $\beta_{m}$ where $m\in\mathbb{N}$. In order to avoid notations such as 
$\badb_{m}\left(\omega,M^{\mathrm{upper}}_{m},M^{\mathrm{lower}}_{m},R^{\mathrm{upper}}_{m},R^{\mathrm{lower}}_{m},\beta_{m}\right)$, 
we will similarly use these parameters implicitly in our definitions of 
``good'' and ``bad''
sets later in this paper 
(specifically, Definitions 
\ref{def:badbupper}, 
\ref{def:badbredupper}, 
\ref{def:badblower}, 
\ref{def:badbredlower}, 
\ref{def:badb_length}, 
\ref{def:badb_redlength}, 
\ref{def:badblocal}, 
\ref{def:badb_all}
).
\end{rem}

\begin{defn}
\label{def:badb_asterix_at_levels}

For $m,m'\in\left\{0,\dots,k\right\}$ such that $m'\le m$, $m\ge1$ and $\omega\in\walks$, we will use the following notations from here on:
\[
\badb_{m,m'}^{\mathrm{*}}\left(\omega\right)\coloneqq\bigcup_{j\in\badb_{m}^{\mathrm{*}}\left(\omega\right)}I_{m,j}^{m'}\left(\omega\right),
\]
\[
\goodb_{m}^{\mathrm{*}}\left(\omega\right) \coloneqq
\left\{ 0,1,\dots,\len_{m}\left(\omega\right)-1\right\} 
\backslash\badb_{m}^{\mathrm{*}}\left(\omega\right),
\]
\[
\goodb_{m,m'}^{\mathrm{*}}\left(\omega\right) \coloneqq\bigcup_{j\in\goodb_{m}^{\mathrm{*}}\left(\omega\right)}I_{m,j}^{m'}\left(\omega\right).\\
\]
Here $\mathrm{*}$ either stands for one of the notations 
$\mathrm{upper}, \mathrm{redUpper}$, $\mathrm{lower}, \mathrm{redLower}$, $\mathrm{length}, \mathrm{redLength}$ and $\mathrm{local}$
that we will use in Definitions 
\ref{def:badbupper}, 
\ref{def:badbredupper},
\ref{def:badblower}, 
\ref{def:badbredlower}, 
\ref{def:badb_length}, 
\ref{def:badb_redlength} and
\ref{def:badblocal}, 
where $\badb_{m}^{\mathrm{*}}\left(\omega\right)$ will be defined, 
or for the appropriate notations for Definition \ref{def:badb_all}, where $\badb_{m}\left(\omega\right)$ will be defined.

\end{defn}

Let us give two illustrations of this definition, with $L=L_{1}\cdots L_{m}$:

\begin{figure}[h]
\tikzsetnextfilename{illustration2}
\begin{tikzpicture}[y=.1cm, x=.0007cm,font=\sffamily]
 	\draw[->] (0,0) -- coordinate (x axis mid) (10500,0);
	\draw[->] (0,-45) -- coordinate (second x axis mid) (10500,-45);
    \draw[<->] (0,-35) -- coordinate (y axis mid) (0,15);
    \foreach \y in {-3,-2,...,1}
        \draw (1pt,\y*10) -- (-3pt,\y*10) 
     	    node[anchor=east] {\y $L$}; 
	\node[right=4cm] at (x axis mid) {$t$};
	\node[right=4cm] at (second x axis mid) {$t$};
	\node[above=2.5cm] at (y axis mid) {$X^{\omega}_{t}$};
	\draw plot[mark=., mark options={fill=white}] 
		file {div_soft_badb_upper.data};
    \foreach \y in {-30,-20,...,10}
    	\draw[-,dotted] (0,\y) -- (10500,\y);
     \draw[pattern=north east lines]  (6,10) rectangle (852,-10)  (852,0) rectangle (1372,-20)  (2288,-20) rectangle (3570,0)  (5760,-10) rectangle (10000,10);
	\draw  (1372,-30) rectangle (2288,-10)  (3570,10) rectangle (4190,-10)   (4190,-20) rectangle (5760,0);
	\draw[pattern=north east lines]  (6,-45) rectangle (852,-40)  (852,-45) rectangle (1372,-40)  (2288,-45) rectangle (3570,-40)  (5760,-45) rectangle (10000,-40);
	\draw  (1372,-45) rectangle (2288,-40)  (3570,-45) rectangle (4190,-40)  (4190,-45) rectangle (5760,-40);
	\draw[fill=black] (0,-45) circle (0.03cm);
\end{tikzpicture}
\caption{Illustration of Definition \ref{def:badb_asterix_at_levels}. 
}
\label{fig:badb_asterix_level}
\end{figure}
In Figure \ref{fig:badb_asterix_level}, $m=1$, $\badb_{m}^{\mathrm{*}}\left(\omega\right) = \left\{0,1,3,6\right\}$, and ``bad'' intervals are represented 
by rectangles shaded by diagonals, while ``good'' intervals are 
represented by non-shaded rectangles. The lower part represents $\badb_{m,0}^{\mathrm{*}}\left(\omega\right)$ and $\goodb_{m,0}^{\mathrm{*}}\left(\omega\right)$.

\begin{figure}[h]
\tikzsetnextfilename{illustration2.5}
\begin{tikzpicture}[y=.5cm, x=.0012cm,font=\sffamily]
 	\draw[->] (0,0) -- coordinate (x axis mid) (5760+100,0);
	\node[right=4cm] at (x axis mid) {$t$};
    \node at (-1000,0.5) {$\badb_{1,0}^{\mathrm{*}}\left(\omega\right)$};
	\draw[pattern=north east lines]  (234,0) rectangle (432,1)  (1000,0) rectangle (1372,1)  (1372,0) rectangle (1456,1)  (1818,0) rectangle (2288,1)  (2500,0) rectangle (2801,1)  (2801,0) rectangle (3141,1)  (3570,0) rectangle (3900,1)  (3900,0) rectangle (4190,1);
	\draw  (0,0) rectangle (234,1)  (432,0) rectangle (852,1)  (852,0) rectangle (1000,1)  (1456,0) rectangle (1818,1)  (2288,0) rectangle (2500,1)  (3141,0) rectangle (3570,1)  (4190,0) rectangle (5760,1);
	\node at (-1000,1.5) {$\badb_{2,0}^{\mathrm{*}}\left(\omega\right)$};
	\draw[pattern=north east lines]  (0,1) rectangle (432,2)  (1818,1) rectangle (2801,2)  (3570,1) rectangle (5760,2);
	\draw  (432,1) rectangle (1372,2)  (1818,1) rectangle (2801,2)  (2801,1) rectangle (3570,2);
	\node at (-1000,2.5) {$\badb_{3,0}^{\mathrm{*}}\left(\omega\right)$};
	\draw[pattern=north east lines]  (0,2) rectangle (1372,3)  (1372,2) rectangle (2801,3);
	\draw  (2801,2) rectangle (5760,3);
	\draw[fill=black] (0,0) circle (0.03cm);
\end{tikzpicture}
\caption{Another illustration of Definition \ref{def:badb_asterix_at_levels}. }
\label{fig:badb_asterix_multilevel}
\end{figure}

In Figure \ref{fig:badb_asterix_multilevel}, we see the hierarchical structure of the $\badb_{m,0}^{\mathrm{*}}\left(\omega\right)$-s, that follows 
from the hierarchical structure of the reduced walk.

\begin{defn}
\label{def:badbupper}
For $m\in\left\{1,\dots,k\right\}$ and $\omega\in\walks$:
\[
\badb_{m}^{\mathrm{upper}}\left(\omega\right) \coloneqq
\left\{ j\in\left\{ 0,1,\dots,\len_{m}\left(\omega\right)-1\right\} :
	\left|I_{m,j}^{0}\left(\omega\right)\right|>M_{m}^{\mathrm{upper}}\right\}. 
\]
Moreover, $\goodb_{m}^{\mathrm{upper}}\left(\omega\right),\badb_{m,m'}^{\mathrm{upper}}\left(\omega\right)$ and $\goodb_{m,m'}^{\mathrm{upper}}\left(\omega\right)$ 
are defined as in Definition \ref{def:badb_asterix_at_levels}.
\end{defn}
\begin{defn}
\label{def:badbredupper}
For $m\in\left\{1,\dots,k\right\}$ and $\omega\in\walks$:
\[
\badb_{m}^{\mathrm{redUpper}}\left(\omega\right) \coloneqq
\left\{ j\in\left\{ 0,1,\dots,\len_{m}\left(\omega\right)-1\right\} :
\left|I_{m,j}\left(\omega\right)\right|>R_{m}^{\mathrm{upper}}\right\}.
\]
Moreover, $\goodb_{m}^{\mathrm{redUpper}}\left(\omega\right),\badb_{m,m'}^{\mathrm{redUpper}}\left(\omega\right)$ and $\goodb_{m,m'}^{\mathrm{redUpper}}\left(\omega\right)$ 
are defined as in Definition \ref{def:badb_asterix_at_levels}.
\end{defn}

The Definitions \ref{def:badbupper} and \ref{def:badbredupper} are used in order to throw away all the $j$-s for which 
$\left|I_{m,j}^{0}\left(\omega\right)\right|$ and $\left|I_{m,j}\left(\omega\right)\right|$ are abnormally long, or in other words in which the function $\next$ returns a value that 
is much larger than expected for $i_{\omega,L_{1}\cdots L_{m}}\left(j\right)$. 
Recall that $I_{m,j}\left(\omega\right) = I_{m,j}^{m-1}\left(\omega\right)$ is generally much smaller than $I_{m,j}^{0}\left(\omega\right)$, and thus we will have that $R_{m}^{\mathrm{upper}}$ is generally much smaller than $M_{m}^{\mathrm{upper}}$. 
It would be useful to assume that this doesn't happen, and therefore we will try to control the number of such $j$-s, and then find some way 
to ``throw them away''.

In fact, we already ``throw away'' some of the record $x=\left(\left(\omega\left(t\right),\sigma\left(X^{\omega}_{t}\right)\right)\right)_{t=0}^{N-1}$
from Theorem \ref{thm:main_theorem}: 
We don't use the entire set $\left\{0\le t < N : x\left(t\right) \neq x'\left(t\right) \right\}$ whose size is bounded by $\delta N$. 
Thus, it will not be terribly different to add to this set another $\delta N$ new  $t$-s, in which the random walk was inside abnormally long  $I_{m,j}^{0}\left(\omega\right)$. 
This is the goal of Definition \ref{def:badb_asterix_at_levels}: identify inconvenient parts of the walk, 
bound them inside a set of size $\approx\delta N$ times, and treat them as if they were part of the adversarially inserted errors that we avoid.

\subsubsection{Probability of spending too much time in $\protect\badb_{m,0}^{\mathrm{upper}}\left(\omega\right)$ 
and $\protect\badb_{m,m-1}^{\mathrm{redUpper}}\left(\omega\right)$}

\begin{lem}
\label{lem:bound_on_badb_uppers_volume}

Let $m\in\left\{1,\dots,k\right\}$, and let $0<\alpha<1$. Then:
\begin{enumerate}
\item
\begin{multline*}
\Pr_{\omega}\left(\left|\badb_{m,m-1}^{\mathrm{redUpper}}\left(\omega\right)\right|>
		\alpha\len_{m-1}\left(\omega\right)  		\text{ and }  \omega\in\regwalksall\right)\\
<
\pbin\left(\frac{4}{R_{m}^{\mathrm{upper}}}N_{m-1},\Pr_{\omega}\left(\next\left(\omega,2L_{m},0\right)>\frac{R_{m}^{\mathrm{upper}}}{4}\right),\frac{\alpha}{2}\right),
\end{multline*}
\item
\begin{multline*}
\Pr_{\omega}\left(\left|\badb_{m,0}^{\mathrm{upper}}\left(\omega\right)\right|>
	\alpha\len\left(\omega\right)  	\text{ and }  \omega\in\regwalksall\right)\\
<\pbin\left(\frac{4}{M_{m}^{\mathrm{upper}}}N_{0},\Pr_{\omega}\left(\next\left(\omega,2L_{1}\cdots L_{m},0\right)>\frac{M_{m}^{\mathrm{upper}}}{4}\right),\frac{\alpha}{2}\right).
\end{multline*}
\end{enumerate}
\end{lem}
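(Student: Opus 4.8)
The plan is to prove both bounds by the same two-step scheme: first control the \emph{number} of bad intervals at level $m$ (i.e.\ the size of $\badb_m^{\mathrm{redUpper}}(\omega)$ resp.\ $\badb_m^{\mathrm{upper}}(\omega)$) by a binomial tail, using the strong Markov property of the walk; then translate a bound on the number of bad intervals into a bound on the total volume they occupy at the relevant lower level, using the a priori length control that comes from $\omega \in \regwalksall$ together with the upper-length definitions themselves. Throughout, the event $\omega\in\regwalksall$ is used only to guarantee $\len_{m-1}(\omega)\ge N_{m-1}$ (resp.\ $\len(\omega)\ge N_0$), so that there are enough intervals present to make the binomial estimate meaningful.

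\textbf{Step 1: the binomial tail for the count.} Fix item (1). Walking through the $L_1\cdots L_{m-1}$-reduced walk $\red(\omega,L_1\cdots L_{m-1})$, the intervals $I^{m-1}_{m,j}(\omega)$ are generated by successive applications of $\next(\cdot,L_m,\cdot)$, and by the strong Markov property the increments $|I^{m-1}_{m,j}(\omega)|$, for $j$ ranging over a fixed block of indices, are i.i.d.\ copies of $\next(\omega,L_m,0)$ started afresh. An index $j$ lies in $\badb_m^{\mathrm{redUpper}}(\omega)$ exactly when this increment exceeds $R_m^{\mathrm{upper}}$. The subtlety is that the number of such $j$'s present, $\len_m(\omega)$, is itself random, so I would not estimate the count among \emph{all} $j<\len_m(\omega)$ directly. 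Instead: on $\regwalksall$ we have $\len_{m-1}(\omega)\ge N_{m-1}$, so the level-$m$ walk covers at least $N_{m-1}$ steps of the level-$(m-1)$ walk; partition these first $N_{m-1}$ level-$(m-1)$ steps into blocks, each of which is consumed by one level-$m$ interval of size $\ge 1$. The number of level-$m$ intervals needed to cover the first $N_{m-1}$ level-$(m-1)$ steps is at most $N_{m-1}$; but a \emph{bad} one consumes $>R_m^{\mathrm{upper}}$ ground-level steps — one must be careful here about which ``length'' the threshold refers to. Reading Definition \ref{def:badbredupper}, $\badb_m^{\mathrm{redUpper}}$ thresholds $|I_{m,j}(\omega)|=|I^{m-1}_{m,j}(\omega)|$, i.e.\ the length \emph{measured in level-$(m-1)$ steps}. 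So a bad level-$m$ interval consumes more than $R_m^{\mathrm{upper}}$ level-$(m-1)$ steps, whence among the first $N_{m-1}$ level-$(m-1)$ steps there can be at most $4N_{m-1}/R_m^{\mathrm{upper}}$ disjoint bad-interval-sized chunks — this is the source of the parameter $\frac{4}{R_m^{\mathrm{upper}}}N_{m-1}$ in the statement (the factor $4$ absorbing the comparison between $\next(\cdot,L_m,\cdot)$ and $\next(\cdot,2L_m,\cdot)$, i.e.\ a bad interval of ``radius $L_m$'' can be slightly shorter than one of ``radius $2L_m$'', hence the $2L_m$ and the $R_m^{\mathrm{upper}}/4$ in the success-probability argument).

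\textbf{Step 2: from count to volume.} Once $|\badb_m^{\mathrm{redUpper}}(\omega)|$ is bounded, I would bound $|\badb_{m,m-1}^{\mathrm{redUpper}}(\omega)| = \sum_{j\in\badb_m^{\mathrm{redUpper}}(\omega)} |I^{m-1}_{m,j}(\omega)|$. Here I think the right move is the reverse of Step 1's chunking: the claim ``$|\badb_{m,m-1}^{\mathrm{redUpper}}(\omega)| > \alpha\len_{m-1}(\omega)$'' means the bad intervals occupy an $\alpha$-fraction of the level-$(m-1)$ walk; since each bad interval has level-$(m-1)$-length $> R_m^{\mathrm{upper}}$... no, that goes the wrong direction. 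Rather: split the failure event as ``many bad intervals'' $\cup$ ``few bad intervals but they are collectively long''. If the bad intervals number at most $\tfrac{\alpha}{2}\cdot\tfrac{\len_{m-1}(\omega)}{R_m^{\mathrm{upper}}}$ — no. The cleanest route, and the one matching the $\frac{\alpha}{2}$ in the conclusion, is: the number of bad level-$m$ intervals among those covering the first $N_{m-1}\le\len_{m-1}(\omega)$ steps is $\mathrm{Bin}$-dominated, and on the complement of the stated binomial event this number is $\le \tfrac{\alpha}{2}\cdot\tfrac{4}{R_m^{\mathrm{upper}}}N_{m-1}$; meanwhile each bad interval, \emph{being bad}, is guaranteed short relative to... — the point is that on $\regwalksall$ one separately knows (or can bootstrap from the level-$m$ upper bounds applied at $m$ rather than below) that no single interval is catastrophically long, so a count bound of the above form upgrades to a volume bound of $\tfrac{\alpha}{2}\cdot 4 \cdot \tfrac{N_{m-1}}{R_m^{\mathrm{upper}}} \cdot R_m^{\mathrm{upper}} \le 2\alpha N_{m-1} \le$ hmm, off by a constant. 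I would track the constants carefully against the $\frac{4}{R_m^{\mathrm{upper}}}$, $\frac{R_m^{\mathrm{upper}}}{4}$, $\frac{\alpha}{2}$ appearing in the statement — they are clearly engineered so that a single factor-$4$ slack and a halving of $\alpha$ make everything fit, which strongly suggests the argument is: bad intervals that are themselves not abnormally long contribute volume $\le \tfrac{\alpha}{2}\len_{m-1}$, and there simply are no other bad intervals once we have conditioned appropriately. Item (2) is identical with $L_m\leadsto L_1\cdots L_m$, $R\leadsto M$, $N_{m-1}\leadsto N_0$, $\len_{m-1}\leadsto\len$, measuring interval length at ground level instead.

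\textbf{Main obstacle.} The genuine difficulty is Step 2 — the precise bookkeeping that converts a bound on the \emph{number} of bad intervals into a bound on their \emph{total length}, since a bad interval is by definition long, so counting few of them does not obviously bound their volume. The resolution must exploit that the threshold $R_m^{\mathrm{upper}}$ (resp.\ $M_m^{\mathrm{upper}}$) is only ``moderately large'', so that one extra layer of truncation (handled elsewhere in $\sinwalks$, or via the strong Markov structure giving that $\next(\omega,2L_m,0)$ has exponential tails so doubly-bad intervals are negligible) closes the gap, together with the deterministic fact that the bad intervals are disjoint subintervals of $\{0,\dots,\len_{m-1}(\omega)-1\}$, which immediately yields $|\badb_{m,m-1}^{\mathrm{redUpper}}(\omega)|\le \len_{m-1}(\omega)$ and lets the count bound be leveraged through the packing estimate of Step 1 run in reverse. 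Matching the exact constants $4$ and $\tfrac12$ against the displayed formula will be the fiddly part, but it is routine once the structure is set up.
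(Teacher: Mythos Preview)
Your plan has a real gap, and you correctly identify where it is: Step~2. The approach of first bounding the \emph{count} $|\badb_m^{\mathrm{redUpper}}(\omega)|$ and then upgrading to the \emph{volume} $|\badb_{m,m-1}^{\mathrm{redUpper}}(\omega)|$ runs into exactly the obstruction you name --- a bad interval is by definition long, with no a~priori upper bound on its length, so a count bound does not convert to a volume bound without extra input. Your proposed fixes (an ``extra layer of truncation handled elsewhere in $\sinwalks$'', or ``doubly-bad intervals are negligible'') would not produce the clean $\pbin$ expression in the statement; at best they would give a different bound with extra error terms. Note also that $\omega\in\regwalksall$ gives you only $\len_{m-1}(\omega)\ge N_{m-1}$, not any control on individual interval lengths, so you cannot appeal to it for the truncation you want.

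The paper's argument avoids the count-to-volume conversion entirely by working with a \emph{fixed, deterministic} partition of $\{0,\dots,\len_{m-1}(\omega)-1\}$ into blocks of length $\tfrac{1}{4}R_m^{\mathrm{upper}}$. For each such block, the event ``the level-$(m-1)$ reduced walk does not change position by $2L_m$ over this block'' is an independent Bernoulli with success probability $p=\Pr_\omega\bigl(\next(\omega,2L_m,0)>\tfrac{1}{4}R_m^{\mathrm{upper}}\bigr)$, by the strong Markov property applied at the deterministic block endpoints. Now observe: if $j\in\badb_m^{\mathrm{redUpper}}(\omega)$ then $|I_{m,j}(\omega)|>R_m^{\mathrm{upper}}$, so at most two blocks (total length $\le\tfrac{1}{2}R_m^{\mathrm{upper}}<\tfrac{1}{2}|I_{m,j}|$) straddle the boundary of $I_{m,j}$, and hence at least half the indices of $I_{m,j}$ lie in blocks fully contained in $I_{m,j}$; any such block is automatically ``constrained'' in the above sense, since within $I_{m,j}$ the reduced walk stays within distance $L_m$ of its starting point. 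Thus $|\badb_{m,m-1}^{\mathrm{redUpper}}(\omega)|>\alpha\len_{m-1}(\omega)$ forces at least an $\tfrac{\alpha}{2}$-fraction of the $\ge\tfrac{4}{R_m^{\mathrm{upper}}}N_{m-1}$ blocks to be constrained, and that is exactly the binomial tail claimed. This is also where the $2L_m$ and the factor $4$ actually come from --- they are artifacts of the block partition, not of any comparison between $\next(\cdot,L_m,\cdot)$ and $\next(\cdot,2L_m,\cdot)$ at the level of whole intervals as you suggested.
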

\begin{proof}The two properties are very similar, and their proofs are almost identical. Therefore for the sake of simplicity we will show only (1).
First divide $\left\{0,1,\dots,\len_{m-1}\left(\omega\right)\right\}$ to intervals of length $\frac{1}{4}R^{\mathrm{upper}}_{m}$.
There are $\frac{4\len_{m-1}\left(\omega\right)}{R^{\mathrm{upper}}_{m}}$ such intervals, and for each $j\in\badb_{m}^{\mathrm{redUpper}}\left(\omega\right)$ 
at least half 
of the indices $i\in I_{m,j}\left(\omega\right)$ are contained in such an interval that is a subset of $I_{m,j}\left(\omega\right)$ (as opposed to partly 
contained in $I_{m,j}\left(\omega\right)$ and partly in $I_{m,j-1}\left(\omega\right)$ or $I_{m,j+1}\left(\omega\right)$).

If an interval of length $\frac{1}{4}R^{\mathrm{upper}}_{m}$ is contained in some $I_{m,j}\left(\omega\right)$, it implies that the position $X^{\omega}_{t}$ in this interval 
cannot change by more than $2L_{m}$, and the probability that this happens is $p=\Pr_{\omega}\left(\next\left(\omega,2L_{m},0\right)>\frac{R_{m}^{\mathrm{upper}}}{4}\right)$. 
Also note that the second property occurs independently for different intervals.
To conclude, we get that 
\[
\left|\badb_{m,m-1}^{\mathrm{redUpper}}\left(\omega\right)\right|>
		\alpha\len_{m-1}\left(\omega\right)  		\text{ and }  \omega\in\regwalksall
\]
 implies that out of $\frac{4\len_{m-1}\left(\omega\right)}{R^{\mathrm{upper}}_{m}} > \frac{4 N_{m-1} }{R^{\mathrm{upper}}_{m}}$ intervals a fraction of over $\frac{1}{2}\alpha$ 
satisfy a property that holds with probability $p$, and the events of this happening are independent. This implies the part (1) of the lemma. The following illustration shows the ingredients of the proof:

\begin{figure}[h]
\tikzsetnextfilename{illustration3}
\begin{tikzpicture}[y=.1cm, x=.0007cm,font=\sffamily]
	\draw (0,-40) -- coordinate (x axis mid) (10000,-40);
    \draw (0,-40) -- coordinate (y axis mid) (0,15);
    \foreach \x in {0,2,...,10}
        \draw (\x*1000,-112pt) -- (\x*1000,-116pt)
            node[anchor=north] {$\frac{\x}{4}R^{\mathrm{upper}}_{m}$};
    \foreach \x in {1,3,...,9}
        \draw (\x*1000,-116pt) -- (\x*1000,-112pt)
            node[anchor=south] {$\frac{\x}{4}R^{\mathrm{upper}}_{m}$};
    \foreach \y in {-4,-3,-2,...,1}     		\draw (1pt,10*\y) -- (-3pt,10*\y) 
        node[anchor=east] {$\y L_{m}$}; 
    \node[below=0.8cm] at (x axis mid) {$t$};
    \node[left=0.8cm] at (y axis mid) {$X^{\omega}_{t}$};
    \draw plot[mark=., mark options={fill=white}] 
        file {div_soft_badb_upper.data};
    \foreach \y in {-30,-20,...,10}
    	\draw[-,dotted] (0,\y) -- (10000,\y);
	\foreach \x in {0,1000,...,10000}
    	\draw[green,-] (\x,-40) -- (\x,15);
    \draw[->,blue,mark=x]  (6,0)  --  (852,-10)  --  (1372,-20)  --  (2288,-10)  --  (3570,0)  --  (4190,-10)  --  (5760,0)  ;
    \draw[red] (6,10) rectangle (852,-10)   (852,0) rectangle (1372,-20)   (1372,-30) rectangle (2288,-10)   (2288,-20) rectangle (3570,0)   (3570,10) rectangle (4190,-10)   (4190,-20) rectangle (5760,0) ;
    \draw[red] (10000,10) -- (5760,10) -- (5760,-10) -- (10000,-10) ;
\end{tikzpicture}
\caption{Idea of the proof of Lemma \ref{lem:bound_on_badb_uppers_volume}}
\label{fig:bound_on_badb_uppers_volume_proof}
\end{figure}

In Figure \ref{fig:bound_on_badb_uppers_volume_proof}, $m=1$, and we see that  $\left\{0,1,2,3,4,5\right\}\subseteq\goodb_{m}^{\mathrm{redUpper}}\left(\omega\right)$ while $6\in\badb_{m}^{\mathrm{redUpper}}\left(\omega\right)$. 
Indeed, over half of $I_{m,6}\left(\omega\right)$ is covered by the intervals of length $\frac{1}{4}R^{\mathrm{upper}}_{m}$ (separated by green vertical lines in the illustration) with range of $X^{\omega}_{t}$ smaller than $2L_{m}$.
The proof of part (2) is similar and left to the reader.

\end{proof}

\begin{lem}
\label{lem:prob lower bound on next of walk}
For $L,N\in\mathbb{N}$ such that $N>80L^{2}$ and $L\ge1000$ it
holds that:
\[
\Pr_{\omega}\left(\next\left(\omega,L,0\right)>N\right)<\exp\left(-\frac{1}{27}\frac{N}{L^{2}}\right).
\]
\end{lem}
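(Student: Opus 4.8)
The plan is to bound the probability that the simple random walk fails to move a net distance of $L$ (in either direction) within $N$ steps. First I would break the $N$ steps into $\lfloor N/L^2 \rfloor$ consecutive blocks, each of length exactly $L^2$ (discarding the remainder, which only helps since fewer blocks means a larger probability, and we want an upper bound — so actually one should be slightly careful and note $N > 80L^2$ guarantees at least, say, $\lfloor N/L^2\rfloor \ge \tfrac{N}{2L^2}$ blocks, or more simply work with $\lfloor N/L^2 \rfloor$ directly). The key observation is that $\next(\omega,L,0) > N$ forces, in particular, that during \emph{every} one of these blocks the walk's displacement stays strictly inside the window of width $2L$ around its position at the start of the block. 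Since the increments in distinct blocks are independent, it suffices to show that for a single block of length $L^2$, the probability that a simple random walk started at $0$ stays within $(-L,L)$ for all $L^2$ steps is bounded by some constant $c<1$; then the overall probability is at most $c^{\lfloor N/L^2\rfloor} \le c^{N/(2L^2)}$, and choosing constants appropriately yields the exponent $-\tfrac{1}{27}\tfrac{N}{L^2}$ (using $N > 80L^2$ to absorb floor-function losses into the constant).

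For the single-block estimate, I would use a standard gambler's-ruin / exit-time bound: the expected exit time of a simple random walk from the interval $(-L,L)$ is exactly $L^2$ (for the walk started at $0$, $\mathbb{E}[\tau] = L\cdot L = L^2$ where $\tau$ is the first exit time), so by Markov's inequality $\Pr(\tau > \lambda L^2) \le 1/\lambda$ — but that alone gives only a constant bound, which is exactly what we need for one block. Concretely, $\Pr(\tau > L^2) \le 1$ is useless, so instead I'd take blocks of length, say, $4L^2$ (or some fixed multiple $a L^2$ with $a$ chosen at the end), giving $\Pr(\tau > aL^2) \le 1/a$ per block by Markov, hence $\Pr(\next(\omega,L,0)>N) \le a^{-\lfloor N/(aL^2)\rfloor}$. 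Optimizing $a$ against the target constant $\tfrac{1}{27}$, and using $L \ge 1000$ and $N > 80 L^2$ to handle the floor (so that $\lfloor N/(aL^2)\rfloor \ge N/(aL^2) - 1 \ge \tfrac{N}{2aL^2}$ say, with room to spare given $N/L^2 > 80$), one obtains the claimed bound. An alternative to the expected-exit-time computation, if one wants a fully elementary route, is to bound $\Pr(\tau > aL^2)$ directly via the reflection principle or via the CLT-type estimate that after $\sim L^2$ steps the walk is at distance $\gtrsim L$ with probability bounded below by an absolute constant; the Markov-inequality-on-$\mathbb{E}[\tau]$ route is cleanest.

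I expect the only real obstacle to be bookkeeping: making sure the floor functions, the discarded remainder of length $< L^2$, and the choice of the block-multiplier $a$ all combine to produce precisely the constant $\tfrac{1}{27}$ rather than some other absolute constant — this is why the hypotheses $N > 80 L^2$ and $L \ge 1000$ are present, to give enough slack that a crude accounting still lands below $\tfrac{1}{27}\tfrac{N}{L^2}$. There is no conceptual difficulty: independence of disjoint blocks plus a single constant-probability confinement estimate is the whole idea, and the condition $t\bar p \ge p$ style trickery of Lemma \ref{lem:chernoff_pbin_bound} is not needed here. If one wanted an even sharper constant one could use the exact spectral gap of the walk on $\{-L+1,\dots,L-1\}$ (eigenvalue $\cos(\pi/(2L))$), giving confinement probability $\approx (\cos(\pi/(2L)))^{L^2} \approx e^{-\pi^2/8}$ per $L^2$ steps, but the weaker Markov bound already suffices for the stated inequality.
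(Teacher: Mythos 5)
Your plan follows the same overall architecture as the paper's proof: rewrite $\next(\omega,L,0)>N$ as confinement of $X^{\omega}$ to $(-L,L)$, cut the time axis into blocks of length a constant multiple of $L^2$, obtain a constant bound $c<1$ on the per-block confinement probability, and exponentiate using the Markov structure of the walk; the generous hypotheses $N>80L^2$, $L\ge 1000$ absorb the floor-function and constant bookkeeping. Where you diverge is the single-block estimate. The paper takes blocks of length $4L^2$, observes that confinement forces $|X_{4kL^2+j}-X_{4kL^2}|<2L$ for all $j<4L^2$, so the per-block bound reduces (for independent increments) to $\Pr(2L^2-L\le\mathrm{Bin}(4L^2,\tfrac12)\le 2L^2+L)$, which is numerically bounded by a constant slightly above $\Phi(1)-\Phi(-1)\approx 0.68$; you instead invoke the exact expected exit time $\mathbb{E}[\tau]\le L^2$ from a width-$2L$ interval and apply Markov's inequality, which is cleaner and avoids the CLT-style numerical estimate. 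Both routes land comfortably inside the constant $\tfrac{1}{27}$.

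There is one inaccuracy you should repair, though it does not invalidate the approach. You write that the displacement within each block ``stays strictly inside the window of width $2L$ around its position at the start of the block'' and then say the increments of distinct blocks are independent, reducing to the probability that a walk from $0$ stays in $(-L,L)$ for a block. But if the walk stays in $(-L,L)$ in absolute coordinates and starts the block at some $x\in(-L,L)$, the \emph{displacement} $X_{t_k+j}-X_{t_k}$ is only guaranteed to lie in $(-2L,2L)$, a window of width $4L$, not $2L$. So the increment-independence reduction gives the exit time from $(-2L,2L)$ starting at $0$, for which $\mathbb{E}[\tau]=4L^2$, and Markov yields a per-block bound of $4/a$ rather than $1/a$ (still fine — e.g.\ $a=16$ gives $\exp(-\tfrac{\ln 4}{16}\cdot\tfrac{N}{L^2}(1-o(1)))$, and $\tfrac{\ln 4}{16}\approx 0.087>\tfrac1{27}$). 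Alternatively, keep the width-$2L$ interval $(-L,L)$ and the sharper bound $\mathbb{E}[\tau_x]=L^2-x^2\le L^2$, but then the per-block events are \emph{not} functions of the block increments alone; you should instead condition iteratively on $\mathcal{F}_{t_k}$ and use the strong Markov property, noting that $\Pr(\text{confined in block }k\mid\mathcal{F}_{t_k})\le 1/a$ on the event that the walk has survived blocks $0,\dots,k-1$. Either fix works with room to spare; just pick one framing and keep it consistent.
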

\begin{proof}
It is equivalent to say that $\max_{0\le i<N}\left|X_{i}^{\omega}\right|<L$ and 
that $\next\left(\omega,L,0\right)>N$. Therefore:
\begin{multline*}
\Pr\left(\omega:\next\left(\omega,L,0\right)>N\right)
  =  \Pr_{\omega}\left(\max_{0\le i\le N}\left|X_{i}^{\omega}\right|<L\right)\\
\begin{aligned}
 & \le  \Pr_{\omega}\left(
 -L\le X_{i}^{\omega}\le L \text{ for $0\leq i<N$}\right)\\
 & \le  \Pr_{\omega}\left(
 -L\le X_{4 k L^2+j}^{\omega}\le L
 \text{ for $0\leq j<4L^{2}$, $0\le k <\left\lfloor \frac{N}{4L^{2}}\right\rfloor -1$}\right)\\
 & \le  \Pr_{\omega}\left(
 -2L\le X_{4 k L^2+j}^{\omega}-X_{4 k L^2}^{\omega}\le2L 
 \text{ for $0\leq j<4L^{2}$, $0\le k <\left\lfloor \frac{N}{4L^{2}}\right\rfloor -1$}\right).
 \end{aligned}
\end{multline*}
Now, since the random walk $\omega$ is memoryless: 
\begin{multline}
\label{eq:next_bound1}
	\Pr\left(\omega:\next\left(\omega,L,0\right)>N\right) < \\
	\begin{aligned}
		& \Pr\left(
				\omega:
				-2L\le X_{j}^{\omega}\le2L
				\text{ for $j\in\left\{ 0,1,\dots,4L^{2}-1\right\}$}
			\right)^{\left\lfloor \frac{N}{4L^{2}}\right\rfloor -1}.
	\end{aligned}
\end{multline}
Thus, by simply using the definition of the probability measure on random walks,
\begin{multline}
\label{eq:next_bound2}
	\Pr_{\omega}\left(\forall j\in\left\{ 0,1,\dots,4L^{2}-1\right\}: \,
	-2L\le X_{j}^{\omega}\le2L\right)\\
	\begin{aligned}
	& <\Pr_{\omega}\left(\,-2L\le X_{4L^{2}}^{\omega}\le2L\right)\\
	& =\Pr\left( 2L^{2}-L\le\mathrm{Bin}\left(4L^{2},\frac{1}{2}\right)\le 2L^{2}+L \right),
	\end{aligned}
\end{multline}
where $\mathrm{Bin}\left(n,p\right)$ denotes a binomial random variable.
As $L$ goes to infinity, this probability goes to the probability that a normal random variable is at most $1$ standard deviation away from its expectation. 
Since we assumed that $L\ge1000$, we may find by calculation some $C_0>\eqref{eq:next_bound2}$, and get:

\begin{multline*}
\eqref{eq:next_bound1} <
\exp\left(\ln{C_0}
\left(\left\lfloor \frac{N}{4L^{2}}\right\rfloor -1\right)\right) \\<
\exp\left(0.9\ln{C_0} \cdot 
\frac{N}{4L^{2}}\right)
 =
 \exp\left(0.225\ln{C_0} \frac{N}{L^{2}}\right).
\end{multline*}
Now, since the aforementioned calculation gives that 
\[
C_0 = 
\exp\left(-\frac{4\cdot\frac{1}{27}}{0.9}\right)
>\eqref{eq:next_bound2}
,\]
and also
\[
0.225 \ln C_0 =
-\frac{1}{27}
,\]
we get the required bound for the conclusion of the lemma.

\end{proof}

In the rest of this paper, we will apply Lemma \ref{lem:prob lower bound on next of walk} repeatedly, with $N=\frac{R^{\mathrm{upper}}_{m}}{4}$, or with 
$N=\frac{M^{\mathrm{upper}}_{m}}{4}$. For the sake of efficiency, we define the following notation for its premise in these cases. This condition, and the other conditions that will follow, restrict the parameters of our construction. At the end of Section \ref{section_sinwalks}, we will give explicitly parameters that satisfy our restrictions.
\begin{crit}
\label{crit:ratios_criterion}
We say that Condition \ref{crit:ratios_criterion} holds if 
for any $m\in\left\{1,\dots,k\right\}$ it holds that:
\begin{align}
& L_{m}\ge 1000, & \qquad \\
& R_{m}^{\mathrm{upper}}>1280 {L_{m}}^{2}, &\\
& M_{m}^{\mathrm{upper}}>1280\left(L_{1}\cdots L_{m}\right)^{2}. &
\end{align}
\end{crit}

By the last lemma, it holds that:
\begin{cor}
\label{cor:bound_prob_badb_lower}
If Condition \ref{crit:ratios_criterion} holds, 
then for any $m\in\left\{1,\dots,k\right\}$:
\begin{equation}\label{eq:cor:bound_prob_badb_lower eq1}
 \Pr_{\omega}\left(\next\left(\omega,2L_{m},0\right)>\frac{R_{m}^{\mathrm{upper}}}{4}\right)
  <\exp\left(-\frac{1}{432}\frac{R_{m}^{\mathrm{upper}}}{L_{m}^{2}}\right),
\end{equation}
\begin{equation}\label{eq:cor:bound_prob_badb_lower eq2}
 \Pr_{\omega}\left(\next\left(\omega,2L_{1}\cdots L_{m},0\right)>\frac{M_{m}^{\mathrm{upper}}}{4}\right)
  <\exp\left(-\frac{1}{432}\frac{M_{m}^{\mathrm{upper}}}{\left(L_{1}\cdots L_{m}\right)^{2}}\right),
\end{equation}
\begin{equation}\label{eq:cor:bound_prob_badb_lower eq3}
\Pr_{\omega}\left(0\in\badb^{\mathrm{redUpper}}_{m}\left(\omega\right)\right)
<\exp\left(-\frac{1}{27}\frac{R_{m}^{\mathrm{upper}}}{L_{m}^{2}}\right),
\end{equation}
\begin{equation}\label{eq:cor:bound_prob_badb_lower eq4}
\Pr_{\omega}\left(0\in\badb^{\mathrm{upper}}_{m}\left(\omega\right)\right)
<\exp\left(-\frac{1}{27}\frac{M_{m}^{\mathrm{upper}}}{\left(L_{1} \cdots L_{m}\right)^{2}}\right).
\end{equation}
\end{cor}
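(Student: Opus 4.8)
The plan is to read off all four estimates as direct applications of Lemma~\ref{lem:prob lower bound on next of walk}, the only work being to rewrite each event in terms of the function $\next$ and to verify that Condition~\ref{crit:ratios_criterion} supplies the two hypotheses $N>80L^{2}$ and $L\ge 1000$ of that lemma. For \eqref{eq:cor:bound_prob_badb_lower eq1} I would invoke Lemma~\ref{lem:prob lower bound on next of walk} with $L=2L_{m}$ and $N=\frac{1}{4}R_{m}^{\mathrm{upper}}$: the requirement $N>80L^{2}$ reads $R_{m}^{\mathrm{upper}}>1280L_{m}^{2}$, which is precisely the second line of Condition~\ref{crit:ratios_criterion}, and $L=2L_{m}\ge 1000$ since $L_{m}\ge 1000$; the conclusion of the lemma is then $\exp\left(-\frac{1}{27}\cdot\frac{R_{m}^{\mathrm{upper}}/4}{4L_{m}^{2}}\right)=\exp\left(-\frac{1}{432}\frac{R_{m}^{\mathrm{upper}}}{L_{m}^{2}}\right)$, using $27\cdot 16=432$. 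Inequality \eqref{eq:cor:bound_prob_badb_lower eq2} follows in exactly the same fashion with $L=2L_{1}\cdots L_{m}$ and $N=\frac{1}{4}M_{m}^{\mathrm{upper}}$, the hypothesis $N>80L^{2}$ now being the third line of Condition~\ref{crit:ratios_criterion} and $L\ge 2L_{m}\ge 1000$.

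For \eqref{eq:cor:bound_prob_badb_lower eq4} I would first unwind Definitions~\ref{defn:redwalk1}, \ref{defn:intervals} and \ref{def:badbupper}: since $i_{\omega,L_{1}\cdots L_{m}}(0)=0$ and $i_{\omega,L_{1}\cdots L_{m}}(1)=\next(\omega,L_{1}\cdots L_{m},0)$, the ground-level interval $I_{m,0}^{0}(\omega)$ has size exactly $\next(\omega,L_{1}\cdots L_{m},0)$, so that $0\in\badb_{m}^{\mathrm{upper}}(\omega)$ is the event $\next(\omega,L_{1}\cdots L_{m},0)>M_{m}^{\mathrm{upper}}$; applying Lemma~\ref{lem:prob lower bound on next of walk} with $L=L_{1}\cdots L_{m}$ and $N=M_{m}^{\mathrm{upper}}$ (the hypotheses $M_{m}^{\mathrm{upper}}>1280(L_{1}\cdots L_{m})^{2}>80(L_{1}\cdots L_{m})^{2}$ and $L_{1}\cdots L_{m}\ge L_{m}\ge 1000$ coming again from Condition~\ref{crit:ratios_criterion}) gives \eqref{eq:cor:bound_prob_badb_lower eq4}. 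For \eqref{eq:cor:bound_prob_badb_lower eq3} the same unwinding, now through Definition~\ref{def:badbredupper}, shows that $I_{m,0}(\omega)=I_{m,0}^{m-1}(\omega)$ has size $\next(\red(\omega,L_{1}\cdots L_{m-1}),L_{m},0)$, so $0\in\badb_{m}^{\mathrm{redUpper}}(\omega)$ is the event that this quantity exceeds $R_{m}^{\mathrm{upper}}$; here I would use that the reduced embedded walk of a simple random walk is again a simple random walk (immediate from the strong Markov property together with the symmetry between the two exit sides $\pm L$), so that Lemma~\ref{lem:prob lower bound on next of walk} may be applied to $\red(\omega,L_{1}\cdots L_{m-1})$ with $L=L_{m}$ and $N=R_{m}^{\mathrm{upper}}$, whose hypotheses are once more the first two lines of Condition~\ref{crit:ratios_criterion}, giving \eqref{eq:cor:bound_prob_badb_lower eq3}.

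Everything here is routine arithmetic; the only points that warrant a moment's attention are the identifications of $\left|I_{m,0}^{0}(\omega)\right|$ and $\left|I_{m,0}^{m-1}(\omega)\right|$ with the appropriate values of $\next$, and, for \eqref{eq:cor:bound_prob_badb_lower eq3}, the passage one level down via the reduced walk so that Lemma~\ref{lem:prob lower bound on next of walk} is applicable there. I do not anticipate any genuine obstacle.
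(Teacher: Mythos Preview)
Your proposal is correct and is precisely the argument the paper intends: the paper offers no written proof beyond the phrase ``By the last lemma, it holds that:'', and your plan of applying Lemma~\ref{lem:prob lower bound on next of walk} four times with the parameters you list (and, for \eqref{eq:cor:bound_prob_badb_lower eq3}, passing to the reduced walk, which is again simple) is exactly what that phrase points to. Your verification of the hypotheses against Condition~\ref{crit:ratios_criterion} and the arithmetic $27\cdot 16=432$ are right.
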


\begin{lem}
\label{lem:bad_upper_m_bound}
Let $m\in\left\{1,\dots,k\right\}$, and let $0<\alpha<1$. Suppose Condition \ref{crit:ratios_criterion} holds.
\begin{enumerate}
\item
If $\alpha>4\exp\left(-\frac{1}{432}\frac{R_{m}^{\mathrm{upper}}}{L_{m}^{2}}\right)$, 
then:
\[
\Pr_{\omega}\left(\left|\badb_{m,m-1}^{\mathrm{redUpper}}\left(\omega\right)\right|>
\alpha\len_{m-1}\left(\omega\right)
\text{ and } \omega\in\regwalksall\right)
<\exp\left(-\frac{1}{4R_{m}^{\mathrm{upper}}}N_{m-1} {\alpha}^{2}\right).
\]
\item
If $\alpha>4\exp\left(-\frac{1}{432}\frac{M_{m}^{\mathrm{upper}}}{\left(L_{1}\cdots L_{m}\right)^{2}}\right)$, 
then:
\[
\Pr_{\omega}\left(\left|\badb_{m,0}^{\mathrm{upper}}\left(\omega\right)\right|>
\alpha\len_{0}\left(\omega\right)
\text{ and } \omega\in\regwalksall\right)
<\exp\left(-\frac{1}{4M_{m}^{\mathrm{upper}}}N_{0} {\alpha}^{2}\right).
\]
\end{enumerate}
\end{lem}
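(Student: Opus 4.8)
The plan is to chain together the three preceding results: Lemma~\ref{lem:bound_on_badb_uppers_volume} already reduces the probability in the statement to a binomial tail $\pbin$; Corollary~\ref{cor:bound_prob_badb_lower} controls the success probability feeding that $\pbin$; and Lemma~\ref{lem:chernoff_pbin_bound} converts the $\pbin$ into the advertised exponential bound. None of the three needs re-proving, so the whole argument is just a matter of plugging in the right parameters and checking the side condition of the Chernoff lemma.

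For part (1), I would start from Lemma~\ref{lem:bound_on_badb_uppers_volume}(1), which bounds the left-hand side by $\pbin(n,p,\bar p)$ with $n=\frac{4}{R_{m}^{\mathrm{upper}}}N_{m-1}$, $p=\Pr_{\omega}(\next(\omega,2L_{m},0)>\frac{R_{m}^{\mathrm{upper}}}{4})$ and $\bar p=\frac{\alpha}{2}$. Since Condition~\ref{crit:ratios_criterion} is assumed, Corollary~\ref{cor:bound_prob_badb_lower} (specifically \eqref{eq:cor:bound_prob_badb_lower eq1}) gives $p<\exp(-\frac{1}{432}\frac{R_{m}^{\mathrm{upper}}}{L_{m}^{2}})$. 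The hypothesis of part (1) is exactly $\alpha>4\exp(-\frac{1}{432}\frac{R_{m}^{\mathrm{upper}}}{L_{m}^{2}})$, so $\frac{\alpha}{4}>\exp(-\frac{1}{432}\frac{R_{m}^{\mathrm{upper}}}{L_{m}^{2}})>p$. Taking $t=\frac12$ in Lemma~\ref{lem:chernoff_pbin_bound} then makes its hypothesis $t\bar p=\frac{\alpha}{4}\ge p$ hold, and the lemma yields $\pbin(n,p,\bar p)<\exp(-n(1-\frac12)^{2}(\frac{\alpha}{2})^{2})=\exp(-\frac{n\alpha^{2}}{16})$. Substituting $n=\frac{4}{R_{m}^{\mathrm{upper}}}N_{m-1}$ gives $\exp(-\frac{1}{4R_{m}^{\mathrm{upper}}}N_{m-1}\alpha^{2})$, which is the claimed bound.

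Part (2) I would handle by the identical argument after the obvious substitutions: replace $R_{m}^{\mathrm{upper}}$ by $M_{m}^{\mathrm{upper}}$, $L_{m}$ by $L_{1}\cdots L_{m}$, and $N_{m-1}$ by $N_{0}$, using Lemma~\ref{lem:bound_on_badb_uppers_volume}(2) and \eqref{eq:cor:bound_prob_badb_lower eq2} in place of their part-(1) counterparts, and noting that $\len_{0}(\omega)=\len(\omega)$ so the normalizations match.

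I do not expect a genuine obstacle here; the only point requiring care is verifying the side condition $t\bar p\ge p$ of Lemma~\ref{lem:chernoff_pbin_bound}. This is precisely where the numerical factor $4$ in the hypothesis $\alpha>4\exp(\cdots)$ gets spent — one factor of $2$ to absorb $\bar p=\frac{\alpha}{2}$ and one from the choice $t=\frac12$ — and one should also double-check the constant bookkeeping, namely that $(1-\frac12)^{2}\cdot\frac14\cdot\frac{4}{R_{m}^{\mathrm{upper}}}=\frac{1}{4R_{m}^{\mathrm{upper}}}$, so that the exponent matches the statement exactly.
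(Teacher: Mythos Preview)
Your proposal is correct and follows essentially the same approach as the paper: both chain Lemma~\ref{lem:bound_on_badb_uppers_volume}, Corollary~\ref{cor:bound_prob_badb_lower}, and Lemma~\ref{lem:chernoff_pbin_bound} with the choice $t=\tfrac{1}{2}$, and your bookkeeping on the side condition $t\bar p\ge p$ and the final exponent is accurate.
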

\begin{proof}
We will show only the proof of (1), since the proof of (2) is very similar. 
Denote $p=\exp\left(-\frac{1}{432}\frac{R_{m}^{\mathrm{upper}}}{L_{m}^{2}}\right)$. 
By Lemma \ref{lem:bound_on_badb_uppers_volume}, Corollary \ref{cor:bound_prob_badb_lower} and Lemma \ref{lem:chernoff_pbin_bound}:
											  
\begin{multline*}
\Pr_{\omega}\left(\left|\badb_{m,m-1}^{\mathrm{redUpper}}\left(\omega\right)\right|>
	\alpha\len_{m-1}\left(\omega\right)  		\text{ and }  \omega\in\regwalksall\right)\\
\begin{aligned}
  <& \pbin\left(\frac{4N_{m-1}}{R_{m}^{\mathrm{upper}}},\Pr_{\omega}\left(\next\left(\omega,2L_{m},0\right)>\frac{R_{m}^{\mathrm{upper}}}{4}\right),\frac{\alpha}{2}\right)
\\<& \pbin\left(\frac{4N_{m-1}}{R_{m}^{\mathrm{upper}}},p,\frac{\alpha}{2}\right)
\\<& \exp\left(-\frac{N_{m-1}}{4} \cdot \frac{4}{R_{m}^{\mathrm{upper}}} \cdot \left(\frac{\alpha}{2}\right)^{2}\right)
\\<& \exp\left(-\frac{1}{4R_{m}^{\mathrm{upper}}}N_{m-1} {\alpha}^{2}\right).
\end{aligned}
\end{multline*}

\end{proof}

\subsection{Abnormally Short Intervals}

The following two definitions have implicit parameters analogous to $M_{m}^{\mathrm{upper}}$ in the definition of $\badb_{m}^{\mathrm{upper}}\left(\omega\right)$, 
which will be chosen later, as was discussed in Remark \ref{rem:implicit_parameters}.
 \begin{defn}
\label{def:badblower}
For $m\in\left\{1,\dots,k\right\}$ and $\omega\in\walks$:
\[
\badb_{m}^{\mathrm{lower}}\left(\omega\right)\coloneqq
\left\{ j\in\left\{ 0,1,\dots,\len_{m}\left(\omega\right)-1\right\} :
	\left|I_{m,j}^{0}\left(\omega\right)\right|<M_{m}^{\mathrm{lower}}\right\}.
\]
Moreover, $\goodb_{m}^{\mathrm{lower}}\left(\omega\right),\badb_{m,m'}^{\mathrm{lower}}\left(\omega\right)$ and $\goodb_{m,m'}^{\mathrm{lower}}\left(\omega\right)$ 
are defined as in Definition \ref{def:badb_asterix_at_levels}.
\end{defn}
\begin{defn}
\label{def:badbredlower}
For $m\in\left\{1,\dots,k\right\}$ and $\omega\in\walks$:
\[
\badb_{m}^{\mathrm{redLower}}\left(\omega\right)\coloneqq
\left\{ j\in\left\{ 0,1,\dots,\len_{m}\left(\omega\right)-1\right\} 
	:\left|I_{m,j}\left(\omega\right)\right|<R_{m}^{\mathrm{lower}}\right\}.
\]
Moreover, $\goodb_{m}^{\mathrm{redLower}}\left(\omega\right),\badb_{m,m'}^{\mathrm{redLower}}\left(\omega\right)$ and $\goodb_{m,m'}^{\mathrm{redLower}}\left(\omega\right)$ 
are defined as in Definition \ref{def:badb_asterix_at_levels}.
\end{defn}

\subsubsection{Probability of spending too much time in $\protect\badb_{m,0}^{\mathrm{lower}}\left(\omega\right)$
 and $\protect\badb_{m,m-1}^{\mathrm{redLower}}\left(\omega\right)$}
 
\begin{lem}
\label{lem:bad_lower_m_bound_without_asymptotics}
Let $m\in\left\{1,\dots,k\right\}$, and let $0<\alpha<1$. Then:
\begin{enumerate}
\item
\begin{multline*}
\Pr_{\omega}\left(\left|\badb_{m,m-1}^{\mathrm{redLower}}\left(\omega\right)\right|>
\alpha\len_{m-1}\left(\omega\right) \text{ and } \omega\in\regwalksall\right)\\<\pbin\left(N_{m},\Pr_{\omega}\left(0\in\badb_{m}^{\mathrm{redLower}}\left(\omega\right)\right),\alpha\right),
\end{multline*}
\item
\begin{multline*}
\Pr_{\omega}\left(\left|\badb_{m,0}^{\mathrm{lower}}\left(\omega\right)\right|>
\alpha\len\left(\omega\right) \text{ and } \omega\in\regwalksall\right)\\<\pbin\left(N_{m},\Pr_{\omega}\left(0\in\badb_{m}^{\mathrm{lower}}\left(\omega\right)\right),\alpha\right).
\end{multline*}
\end{enumerate}
\end{lem}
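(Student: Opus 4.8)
The plan is to reduce each of the two bounds to a binomial tail, exactly as in the proof of Lemma~\ref{lem:bound_on_badb_uppers_volume}, but exploiting the opposite mechanism: whereas an \emph{abnormally long} interval can occupy a large fraction of the time axis all by itself (which is why one counts blocks there), an \emph{abnormally short} interval cannot, so if the short intervals jointly cover more than a fraction $\alpha$ of the time then there must be \emph{many} of them — more than a fraction $\alpha$ of all the level-$m$ intervals. I will write out (1); part (2) is word for word the same after replacing $L_m$, $R_m^{\mathrm{lower}}$, $I_{m,j}(\omega)$ and $\len_{m-1}(\omega)$ by $L_1\cdots L_m$, $M_m^{\mathrm{lower}}$, $I_{m,j}^0(\omega)$ and $\len(\omega)=\len_0(\omega)$ respectively.

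\emph{Deterministic step.} Fix $\omega\in\walks$, set $J=\len_m(\omega)$, and recall that $j$ is counted in $\badb_m^{\mathrm{redLower}}(\omega)$ precisely when $|I_{m,j}(\omega)|<R_m^{\mathrm{lower}}$, and otherwise $|I_{m,j}(\omega)|\ge R_m^{\mathrm{lower}}$. Using $\len_{m-1}(\omega)=\sum_{j<J}|I_{m,j}(\omega)|$ and $|\badb_{m,m-1}^{\mathrm{redLower}}(\omega)|=\sum_{j\in\badb_m^{\mathrm{redLower}}(\omega)}|I_{m,j}(\omega)|$, a one-line computation — bounding the bad intervals above by $R_m^{\mathrm{lower}}$ and the good ones below by $R_m^{\mathrm{lower}}$ — shows that
\[
|\badb_{m,m-1}^{\mathrm{redLower}}(\omega)|>\alpha\len_{m-1}(\omega)\ \Longrightarrow\ \bigl|\badb_m^{\mathrm{redLower}}(\omega)\bigr|>\alpha\len_m(\omega).
\]
Since $\regwalksall\subseteq\regwalks_{\ge}^{m}$, on the event $\omega\in\regwalksall$ we also have $\len_m(\omega)\ge N_m$. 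Hence the event to be bounded is contained in the event that more than a fraction $\alpha$ of the (at least $N_m$) level-$m$ intervals of $\omega$ lie in $\badb_m^{\mathrm{redLower}}(\omega)$.

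\emph{Probabilistic step.} Extending $\omega$ to an infinite simple random walk (as in Definitions~\ref{defn:redwalk1}--\ref{defn:redwalk2}), the reduced walk $\red(\omega,L_1\cdots L_{m-1})$ again has i.i.d.\ $\pm1$ increments by symmetry, so by the strong Markov property the durations $|I_{m,j}(\omega)|$, $j\ge 0$, are i.i.d., each distributed as $\next(\omega,L_m,0)$; consequently the indicators $\mathbf 1[j\in\badb_m^{\mathrm{redLower}}(\omega)]$ form an i.i.d.\ Bernoulli sequence with parameter $p:=\Pr_\omega(0\in\badb_m^{\mathrm{redLower}}(\omega))$. Moreover, by the reflection symmetry of simple random walk the direction in which $\red(\omega,L_1\cdots L_{m-1})$ first exits $[-L_m,L_m]$ is independent of the time it takes, so the sign increments of $\red(\omega,L_1\cdots L_m)$ — which determine $\len_m(\omega)$ — are independent of the whole family of those indicators. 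Conditioning on the value of $\len_m(\omega)$ therefore gives
\[
\Pr_\omega\bigl(|\badb_{m,m-1}^{\mathrm{redLower}}(\omega)|>\alpha\len_{m-1}(\omega)\ \text{and}\ \omega\in\regwalksall\bigr)\ \le\ \sum_{J\ge N_m}\Pr_\omega(\len_m(\omega)=J)\,\pbin(J,p,\alpha),
\]
and it remains only to bound this by $\pbin(N_m,p,\alpha)$.

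\emph{Main obstacle.} The one point needing care is this last step: that $\pbin(J,p,\alpha)\le\pbin(N_m,p,\alpha)$ for every $J\ge N_m$, so that the sum collapses to $\pbin(N_m,p,\alpha)\sum_{J\ge N_m}\Pr_\omega(\len_m(\omega)=J)\le\pbin(N_m,p,\alpha)$. This monotonicity in the number of trials is exactly the one used implicitly in the proof of Lemma~\ref{lem:bound_on_badb_uppers_volume} (there for the number of blocks $M\ge\tfrac4{R^{\mathrm{upper}}_m}N_{m-1}$), and it is harmless in the parameter regime in which these estimates are eventually applied, where $\alpha>p$ and $N_m$ is large. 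The deterministic step and the two independence facts are elementary — the strong Markov property and the reflection symmetry of simple random walk — so apart from that remark the write-up should be short.
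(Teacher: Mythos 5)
Your proposal reproduces the paper's proof essentially step for step: the deterministic implication $\left|\badb_{m,m-1}^{\mathrm{redLower}}(\omega)\right|>\alpha\len_{m-1}(\omega)\Rightarrow\left|\badb_{m}^{\mathrm{redLower}}(\omega)\right|>\alpha\len_{m}(\omega)$, obtained by bounding bad intervals above and good ones below by $R_m^{\mathrm{lower}}$, is exactly the paper's chain of inequalities, and the passage to $\pbin\left(N_m,\Pr_\omega(0\in\badb_m^{\mathrm{redLower}}(\omega)),\alpha\right)$ via $\regwalksall\subseteq\regwalks_{\ge}^{m}$ and the i.i.d.\ structure of the level-$m$ durations is the same reduction. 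You are in fact more explicit than the paper about the independence of exit times from exit directions (reflection symmetry) and about the implicit monotonicity of the binomial tail in the number of trials that both arguments rely on in the final step.
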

\begin{proof}
The proofs of (1) and (2) are nearly identical, so we will show just the proof of (1). 
First of all note that 
\begin{multline*}
\Pr_{\omega}\left(\left|\badb_{m,m-1}^{\mathrm{redLower}}\left(\omega\right)\right|>
\alpha\len_{m-1}\left(\omega\right) \text{ and } \omega\in\regwalksall\right)\\
<\Pr_{\omega}\left(\left|\badb_{m,m-1}^{\mathrm{redLower}}\left(\omega\right)\right|>
\alpha\len_{m-1}\left(\omega\right) \text{ and } \omega\in\regwalks_{\ge}^{m}\right)\\
<\Pr_{\omega}\left(\left|\badb_{m,m-1}^{\mathrm{redLower}}\left(\omega\right)\right|>
\alpha\len_{m-1}\left(\omega\right)\Biggm|\regwalks_{\ge}^{m}\right).\\
\end{multline*}
Also, note that 
\[\left|\badb_{m,m-1}^{\mathrm{redLower}}\left(\omega\right)\right| < R^{\mathrm{lower}}_{m} \left|\badb_{m}^{\mathrm{redLower}}\left(\omega\right)\right|\] and
\[\left|\goodb_{m,m-1}^{\mathrm{redLower}}\left(\omega\right)\right| \ge R^{\mathrm{lower}}_{m} \left|\goodb_{m}^{\mathrm{redLower}}\left(\omega\right)\right|.\]
Therefore:
\begin{multline*}
\Pr_{\omega}\left(\left|\badb_{m,m-1}^{\mathrm{redLower}}\left(\omega\right)\right|>
\alpha\len_{m-1}\left(\omega\right)\Biggm|\regwalks_{\ge}^{m}\right)\\ 
\begin{aligned} =&
\Pr_{\omega}\left(\left(1-\alpha\right)\left|\badb_{m,m-1}^{\mathrm{redLower}}\left(\omega\right)\right|>
\alpha\left|\goodb_{m,m-1}^{\mathrm{redLower}}\left(\omega\right)\right|\Biggm|\regwalks_{\ge}^{m}\right)\\ <&
\Pr_{\omega}\left(\left(1-\alpha\right) R^{\mathrm{lower}}_{m} \left|\badb_{m}^{\mathrm{redLower}}\left(\omega\right)\right|>
\alpha R^{\mathrm{lower}}_{m} \left|\goodb_{m}^{\mathrm{redLower}}\left(\omega\right)\right|\Biggm|\regwalks_{\ge}^{m}\right)\\ =&
\Pr_{\omega}\left( \left|\badb_{m}^{\mathrm{redLower}}\left(\omega\right)\right|>
\alpha \len_{m}\left(\omega\right) \Biggm|\regwalks_{\ge}^{m}\right)\\ <&
\pbin\left(N_{m},\Pr_{\omega}\left(0\in\badb_{m}^{\mathrm{redLower}}\left(\omega\right)\right),\alpha\right).
\end{aligned}
\end{multline*}
\end{proof}
\begin{lem}
\label{lem:probablistic lower bound on next}
Let $L,N$ be positive integers. It holds that:
\[
\Pr_{\omega}\left(\next\left(\omega,L,0\right)<N\right)<N\exp\left(-\frac{L^{2}}{N}\right).
\]
Therefore, for $m\in\left\{1,\dots,k\right\}$ it holds that:
\[
\Pr_{\omega}\left(0\in\badb_{m}^{\mathrm{lower}}\left(\omega\right)\right)<M_{m}^{\mathrm{lower}}\exp\left(-\frac{\left(L_{1}\cdots L_{m}\right)^{2}}{M_{m}^{\mathrm{lower}}}\right),
\]
\[
\Pr_{\omega}\left(0\in\badb_{m}^{\mathrm{redLower}}\left(\omega\right)\right)<R_{m}^{\mathrm{lower}}\exp\left(-\frac{L_{m}^{2}}{R_{m}^{\mathrm{lower}}}\right).
\]
\end{lem}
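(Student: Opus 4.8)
The plan is to prove the displayed bound $\Pr_{\omega}(\next(\omega,L,0)<N)<N\exp(-L^{2}/N)$ first, and then to obtain the two estimates on $\badb_{m}^{\mathrm{lower}}$ and $\badb_{m}^{\mathrm{redLower}}$ as immediate consequences of it. For the main bound, note that by Definition~\ref{defn:next} the event $\{\next(\omega,L,0)<N\}$ is exactly the event that $X_{j}^{\omega}\in\{\pm L\}$ for some $j\in\{1,\dots,N-1\}$, so a union bound gives
\[
\Pr_{\omega}\bigl(\next(\omega,L,0)<N\bigr)\;\le\;\sum_{j=1}^{N-1}\Pr_{\omega}\bigl(\lvert X_{j}^{\omega}\rvert\ge L\bigr),
\]
and only the terms with $j\ge L$ contribute. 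Since $X_{j}^{\omega}$ is an affine image of a $\mathrm{Bin}(j,\tfrac12)$ random variable, a Chernoff estimate of the flavour of Lemma~\ref{lem:chernoff_pbin_bound}, applied to the two tails $X_{j}^{\omega}\ge L$ and $X_{j}^{\omega}\le-L$, bounds each summand by a quantity of order $\exp(-cL^{2}/j)\le\exp(-cL^{2}/N)$ (the last step using $j<N$), and summing the at most $N$ terms gives a bound of the required form. The only delicate point --- which I expect to be the main obstacle --- is squeezing the Chernoff constants so as to land exactly on the exponent $-L^{2}/N$ as written; here it helps to observe that the dominant contribution to the sum comes from $j$ close to $N$, so the uniform replacement of $j$ by $N$ in the exponent costs essentially nothing in the range $N\asymp L^{2}$ in which the lemma is later applied. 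Alternatively one can bypass the sum entirely: $\Pr_{\omega}(\next(\omega,L,0)<N)\le\Pr_{\omega}(\max_{0\le j<N}\lvert X_{j}^{\omega}\rvert\ge L)\le 4\,\Pr_{\omega}(X_{N-1}^{\omega}\ge L)$ by the reflection principle, after which one Chernoff estimate finishes the job.

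For the estimate on $\badb_{m}^{\mathrm{lower}}$, unwind Definitions~\ref{def:badblower}, \ref{defn:intervals} and~\ref{defn:redwalk1}: by definition $0\in\badb_{m}^{\mathrm{lower}}(\omega)$ means $\lvert I_{m,0}^{0}(\omega)\rvert<M_{m}^{\mathrm{lower}}$, while $I_{m,0}^{0}(\omega)=\{0,1,\dots,\next(\omega,L_{1}\cdots L_{m},0)-1\}$, so that $\lvert I_{m,0}^{0}(\omega)\rvert=\next(\omega,L_{1}\cdots L_{m},0)$. Hence $\Pr_{\omega}(0\in\badb_{m}^{\mathrm{lower}}(\omega))=\Pr_{\omega}(\next(\omega,L_{1}\cdots L_{m},0)<M_{m}^{\mathrm{lower}})$, and the main bound with $L=L_{1}\cdots L_{m}$ and $N=M_{m}^{\mathrm{lower}}$ gives exactly the asserted inequality. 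Here one uses that for $\omega\in\walks$ the first-passage time to $\pm L_{1}\cdots L_{m}$ has the same law as for an untruncated simple random walk, since a walk in $\walks$ is stopped only upon reaching $\pm L_{1}\cdots L_{k}$, which cannot happen earlier.

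The estimate on $\badb_{m}^{\mathrm{redLower}}$ is the same argument one level down. By Definitions~\ref{def:badbredlower}, \ref{defn:intervals} and~\ref{defn:redwalk1}, $0\in\badb_{m}^{\mathrm{redLower}}(\omega)$ means $\lvert I_{m,0}^{m-1}(\omega)\rvert<R_{m}^{\mathrm{lower}}$, and this cardinality equals $\next(\red(\omega,L_{1}\cdots L_{m-1}),L_{m},0)$. Now $\red(\omega,L_{1}\cdots L_{m-1})$ is again a simple random walk --- its successive increments are i.i.d.\ uniform on $\{\pm1\}$ by the reflection symmetry of a simple random walk exiting a symmetric interval together with the strong Markov property --- so the main bound applies to it with $L=L_{m}$ and $N=R_{m}^{\mathrm{lower}}$, yielding the last inequality. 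Thus the whole content of the lemma sits in the first bound, and the two corollaries are just bookkeeping: identify the relevant interval length with a value of $\next$, and invoke the simple-random-walk law of the reduced walk.
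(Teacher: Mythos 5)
Your proposal follows essentially the same route as the paper: a union bound over $j\in\{1,\dots,N-1\}$ followed by an exponential tail bound for $\Pr_{\omega}(|X_j^{\omega}|\ge L)$, together with the identification of $\{0\in\badb_m^{\mathrm{lower}}(\omega)\}$ with $\{\next(\omega,L_1\cdots L_m,0)<M_m^{\mathrm{lower}}\}$ (and the analogous identification at the reduced-walk level), which the paper leaves implicit under its ``Therefore''. The ``delicate point'' you flag about constants is a real one but is shared with the paper: the paper cites Hoeffding for the per-term bound $\Pr_{\omega}(|X_j^{\omega}|\ge L)<\exp(-L^2/j)$, whereas the standard two-sided Hoeffding bound for a sum of $j$ i.i.d.\ $\pm1$ variables gives the weaker $2\exp(-L^2/(2j))$; in the regime where the lemma is actually invoked ($L^2/N$ of order $\log$, so $N\exp(-L^2/N)$ is a small power of $N^{-1}$ with lots of slack) this discrepancy is harmless. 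So the proposal is correct in spirit and matches the paper, with your reflection-principle alternative being a minor variant that would land in the same place.
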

\begin{proof}
Since $\next\left(\omega,L,0\right)<N$ is equivalent to $\max_{0\le i<N}\left|X_{i}^{\omega}\right|\ge L$,
we have:
\begin{eqnarray*}
\Pr_{\omega}\left(\max_{j\in\left\{ 0,\dots,N-1\right\} }\left|X_{j}^{\omega}\right|\ge L\right) & = & \Pr\left(\bigcup_{j=0}^{N-1}\left\{ \omega:\left|X_{j}^{\omega}\right|\ge L\right\} \right)\\
 & = & \Pr\left(\bigcup_{j=1}^{N-1}\left\{ \omega:\left|X_{j}^{\omega}\right|\ge L\right\} \right)\\
 & < & \sum_{j=1}^{N-1}\Pr\left(\left\{ \omega:\left|X_{j}^{\omega}\right|\ge L\right\} \right),
\end{eqnarray*}
and due to Hoeffding inequality \cite{Hoeffding}:
\begin{eqnarray*}
\Pr_{\omega}\left(\max_{j\in\left\{ 0,\dots,N-1\right\} }\left|X_{j}^{\omega}\right|\ge L\right) & < & \sum_{j=1}^{N-1}\exp\left(-\frac{L^{2}}{j}\right)\\
 & < & N\exp\left(-\frac{L^{2}}{N}\right).
\end{eqnarray*}
\end{proof}

\begin{lem}
\label{lem:bad_lower_m_bound}
Let $m\in\left\{1,\dots,k\right\}$ and let $0<\alpha<1$. 
\begin{enumerate}
\item
If $\alpha>2 R_{m}^{\mathrm{lower}}\exp\left(-\frac{L_{m}^{2}}{R_{m}^{\mathrm{lower}}}\right)$ then:
\[
\Pr_{\omega}\left(\left|\badb_{m,m-1}^{\mathrm{redLower}}\left(\omega\right)\right|>
\alpha\len_{m-1}\left(\omega\right)\text{ and } \omega\in\regwalksall\right)
<\exp\left(-\frac{1}{4}N_{m}\alpha^2\right). \]
\item
If $\alpha>2 M_{m}^{\mathrm{lower}}\exp\left(-\frac{\left(L_{1}\cdots L_{m}\right)^{2}}{M_{m}^{\mathrm{lower}}}\right)$ then:
\[
\Pr_{\omega}\left(\left|\badb_{m,0}^{\mathrm{lower}}\left(\omega\right)\right|>
\alpha\len\left(\omega\right)\text{ and } \omega\in\regwalksall\right)
<\exp\left(-\frac{1}{4}N_{m}\alpha^2\right). \]
\end{enumerate}
\end{lem}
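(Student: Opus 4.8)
The plan is to prove Lemma~\ref{lem:bad_lower_m_bound} in exact parallel with Lemma~\ref{lem:bad_upper_m_bound}, by concatenating three facts already in hand: the combinatorial reduction of Lemma~\ref{lem:bad_lower_m_bound_without_asymptotics}, which replaces the event in question by a binomial tail $\pbin$; the single-interval estimate of Lemma~\ref{lem:probablistic lower bound on next}, which bounds the relevant success probability; and the Chernoff-type bound of Lemma~\ref{lem:chernoff_pbin_bound}, which converts the binomial tail into an exponential. Since the two parts are proved identically, I would write out only~(1) and note that~(2) follows verbatim with $\badb_{m,0}^{\mathrm{lower}}$, $M_{m}^{\mathrm{lower}}$ and $L_{1}\cdots L_{m}$ replacing $\badb_{m,m-1}^{\mathrm{redLower}}$, $R_{m}^{\mathrm{lower}}$ and $L_{m}$ respectively.

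For~(1), set $p:=\Pr_{\omega}\left(0\in\badb_{m}^{\mathrm{redLower}}\left(\omega\right)\right)$. First I would apply Lemma~\ref{lem:bad_lower_m_bound_without_asymptotics}(1) to obtain
\[
\Pr_{\omega}\left(\left|\badb_{m,m-1}^{\mathrm{redLower}}\left(\omega\right)\right|>\alpha\len_{m-1}\left(\omega\right)\text{ and }\omega\in\regwalksall\right)<\pbin\left(N_{m},p,\alpha\right).
\]
Next, Lemma~\ref{lem:probablistic lower bound on next} gives $p<R_{m}^{\mathrm{lower}}\exp\left(-L_{m}^{2}/R_{m}^{\mathrm{lower}}\right)$, so the standing hypothesis $\alpha>2R_{m}^{\mathrm{lower}}\exp\left(-L_{m}^{2}/R_{m}^{\mathrm{lower}}\right)$ forces $p<\alpha/2$. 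Taking $t=\tfrac12$ we then have $t\alpha\ge p$, which is exactly the hypothesis of Lemma~\ref{lem:chernoff_pbin_bound} with $n=N_{m}$ and $\bar p=\alpha$, so
\[
\pbin\left(N_{m},p,\alpha\right)<\exp\left(-N_{m}\left(1-\tfrac12\right)^{2}\alpha^{2}\right)=\exp\left(-\tfrac14 N_{m}\alpha^{2}\right),
\]
which is the asserted bound. Part~(2) is obtained the same way, using instead the estimate $\Pr_{\omega}\left(0\in\badb_{m}^{\mathrm{lower}}\left(\omega\right)\right)<M_{m}^{\mathrm{lower}}\exp\left(-\left(L_{1}\cdots L_{m}\right)^{2}/M_{m}^{\mathrm{lower}}\right)$ from Lemma~\ref{lem:probablistic lower bound on next} together with Lemma~\ref{lem:bad_lower_m_bound_without_asymptotics}(2) and the matching hypothesis on $\alpha$.

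There is no real obstacle here; the only point needing attention is checking the threshold condition $t\bar p\ge p$ required by Lemma~\ref{lem:chernoff_pbin_bound}, and the shape of the hypothesis on $\alpha$ in the statement (the factor $2$ in $\alpha>2R_{m}^{\mathrm{lower}}\exp(\cdots)$) was chosen precisely so that this holds with $t=\tfrac12$, which in turn is what produces the constant $\tfrac14$ in the exponent. This parallels the role of the factor $4$ in the hypotheses of Lemma~\ref{lem:bad_upper_m_bound}, with the only difference being that for the ``lower'' sets the reduction of Lemma~\ref{lem:bad_lower_m_bound_without_asymptotics} already yields a binomial on $N_{m}$ trials, so no extra division of the index interval (as in Lemma~\ref{lem:bound_on_badb_uppers_volume}) is needed.
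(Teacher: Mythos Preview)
Your proposal is correct and follows essentially the same approach as the paper: apply Lemma~\ref{lem:bad_lower_m_bound_without_asymptotics} to reduce to a $\pbin$ tail, bound the success probability via Lemma~\ref{lem:probablistic lower bound on next}, and then invoke Lemma~\ref{lem:chernoff_pbin_bound} with $t=\tfrac12$ to obtain the $\exp(-\tfrac14 N_m\alpha^2)$ bound. Your write-up is in fact a bit more explicit than the paper's, which simply chains the three inequalities without spelling out the choice of $t$.
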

\begin{proof}
By combining Lemma \ref{lem:bad_lower_m_bound_without_asymptotics}, Lemma \ref{lem:probablistic lower bound on next} and Lemma \ref{lem:chernoff_pbin_bound}, we get:
\begin{multline*}
\Pr_{\omega}\left(\left|\badb_{m,m-1}^{\mathrm{redLower}}\left(\omega\right)\right|>
\alpha\len_{m-1}\left(\omega\right) \text{ and } \omega\in\regwalksall\right)\\
\begin{aligned}
  <& \pbin\left(N_{m},\Pr_{\omega}\left(0\in\badb_{m}^{\mathrm{redLower}}\left(\omega\right)\right),\alpha\right)
\\<& \pbin\left(N_{m},R_{m}^{\mathrm{lower}}\exp\left(-\frac{L_{m}^{2}}{R_{m}^{\mathrm{lower}}}\right),\alpha\right)
\\<& \exp\left(-\frac{1}{4}N_{m}\alpha^2\right).
\end{aligned}
\end{multline*}

\end{proof}

\subsection{Intervals of Abnormal Length}

\begin{defn}
\label{def:badb_length}
For $m,m'\in\left\{0,\dots,k\right\}$ such that $m'\le m$, $m\ge1$ and $\omega\in\walks$:
\[
\badb_{m}^{\mathrm{length}}\left(\omega\right)\coloneqq\badb_{m}^{\mathrm{lower}}\left(\omega\right)\cup\badb_{m}^{\mathrm{upper}}\left(\omega\right).
\]
Moreover, $\goodb_{m}^{\mathrm{length}}\left(\omega\right),\badb_{m,m'}^{\mathrm{length}}\left(\omega\right)$ and $\goodb_{m,m'}^{\mathrm{length}}\left(\omega\right)$ 
are defined as in Definition \ref{def:badb_asterix_at_levels}.
\end{defn}
\begin{defn}
\label{def:badb_redlength}
For $m,m'\in\left\{0,\dots,k\right\}$ such that $m'\le m$, $m\ge1$ and $\omega\in\walks$:
\[
\badb_{m}^{\mathrm{redLength}}\left(\omega\right)\coloneqq\badb_{m}^{\mathrm{redLower}}\left(\omega\right)\cup\badb_{m}^{\mathrm{redUpper}}\left(\omega\right).
\]
Moreover, $\goodb_{m}^{\mathrm{redLength}}\left(\omega\right),\badb_{m,m'}^{\mathrm{redLength}}\left(\omega\right)$ and $\goodb_{m,m'}^{\mathrm{redLength}}\left(\omega\right)$ 
are defined as in Definition \ref{def:badb_asterix_at_levels}.
\end{defn}

\subsubsection{Probability of spending too much time in $\protect\badb_{m,0}^{\mathrm{length}}\left(\omega\right)$ 
and $\protect\badb_{m,m-1}^{\mathrm{redLength}}\left(\omega\right)$}
Now we wish to combine Lemma \ref{lem:bad_lower_m_bound} and Lemma \ref{lem:bad_upper_m_bound} to a single statement. To avoid repeating their assumptions too many times, we will define two new Conditions on the value of $0<\alpha<1$:
\begin{crit}
\label{crit:alpha_bound_for_redLength_bound}
Condition \ref{crit:alpha_bound_for_redLength_bound} holds for $m\in\left\{1,\dots,k\right\}$ and $0<\alpha<1$ if:
\begin{align} 
&\alpha>4\exp\left(-\frac{1}{432}\frac{R_{m}^{\mathrm{upper}}}{L_{m}^{2}}\right) && \quad\text{This is used in Lemma \ref{lem:bad_upper_m_bound} (1)}  \\
&\alpha>2 R_{m}^{\mathrm{lower}}\exp\left(-\frac{L_{m}^{2}}{R_{m}^{\mathrm{lower}}}\right) && \quad\text{This is used in Lemma \ref{lem:bad_lower_m_bound} (1)} \\
&\alpha>\frac{8 L_{m}R^{\mathrm{upper}}_{m}}{R^{\mathrm{lower}}_{m}} \exp\left(-\frac{ \beta_{m} R^{\mathrm{lower}}_{m} }{L_{m}}\right)
&& \quad\text{This will be used  in Lemma \ref{lem:bad_redlength_local_m_m-1_bounds}.}\end{align}
\end{crit}
\begin{crit}
\label{crit:alpha_bound_for_length_bound}
Condition \ref{crit:alpha_bound_for_length_bound} holds for $m\in\left\{1,\dots,k\right\}$ and $0<\alpha<1$ if:
\begin{align}
&  \alpha>4\exp\left(-\frac{1}{432}\frac{M_{m}^{\mathrm{upper}}}{\left(L_{1}\cdots L_{m}\right)^{2}}\right) 
&& \qquad\text{This is used in Lemma \ref{lem:bad_upper_m_bound} (2)}\\
&  \alpha>2 M_{m}^{\mathrm{lower}}\exp\left(-\frac{\left(L_{1}\cdots L_{m}\right)^{2}}{M_{m}^{\mathrm{lower}}}\right) 
&& \qquad\text{This is used in Lemma \ref{lem:bad_lower_m_bound} (2)}
\end{align}
\end{crit}

\begin{lem}
\label{lem:bad_lengths_m_bounds}
Let $m\in\left\{1,\dots,k\right\}$ and let $0<\alpha<1$. Suppose Condition \ref{crit:ratios_criterion} holds.
\begin{enumerate}
\item If $\alpha$ and $m$ satisfy Condition \ref{crit:alpha_bound_for_redLength_bound} then:
\begin{multline*}
\Pr_{\omega}\left(\left|\badb_{m,m-1}^{\mathrm{redLength}}\left(\omega\right)\right|>
2\alpha\len_{m-1}\left(\omega\right)\text{ and } \omega\in\regwalksall\right)
\\ <
\exp\left(-\frac{1}{4R_{m}^{\mathrm{upper}}}N_{m-1} {\alpha}^{2}\right) +
\exp\left(-\frac{1}{4}N_{m}\alpha^2\right).
\end{multline*}
\item If $\alpha$ and $m$ satisfy Condition \ref{crit:alpha_bound_for_length_bound} then:
\begin{multline*}
\Pr_{\omega}\left(\left|\badb_{m,0}^{\mathrm{length}}\left(\omega\right)\right|>
2\alpha\len\left(\omega\right)
\text{ and } \omega\in\regwalksall\right) \\<
\exp\left(-\frac{1}{4M_{m}^{\mathrm{upper}}}N_{0} {\alpha}^{2}\right) + \exp\left(-\frac{1}{4}N_{m}\alpha^2\right).
\end{multline*}
\end{enumerate}
\end{lem}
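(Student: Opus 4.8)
The plan is to deduce both parts directly from the two lemmas already proved, Lemma \ref{lem:bad_upper_m_bound} and Lemma \ref{lem:bad_lower_m_bound}, via a trivial union bound. The only structural input is the definitional identity of Definition \ref{def:badb_redlength}, namely $\badb_{m}^{\mathrm{redLength}}\left(\omega\right)=\badb_{m}^{\mathrm{redLower}}\left(\omega\right)\cup\badb_{m}^{\mathrm{redUpper}}\left(\omega\right)$. Spreading each bad index $j$ to its level-$(m-1)$ interval as in Definition \ref{def:badb_asterix_at_levels} preserves unions, so $\badb_{m,m-1}^{\mathrm{redLength}}\left(\omega\right)=\badb_{m,m-1}^{\mathrm{redLower}}\left(\omega\right)\cup\badb_{m,m-1}^{\mathrm{redUpper}}\left(\omega\right)$, and likewise, using Definition \ref{def:badb_length}, $\badb_{m,0}^{\mathrm{length}}\left(\omega\right)=\badb_{m,0}^{\mathrm{lower}}\left(\omega\right)\cup\badb_{m,0}^{\mathrm{upper}}\left(\omega\right)$ for part (2).

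For part (1), subadditivity of cardinality under unions gives $\left|\badb_{m,m-1}^{\mathrm{redLength}}\left(\omega\right)\right|\le\left|\badb_{m,m-1}^{\mathrm{redLower}}\left(\omega\right)\right|+\left|\badb_{m,m-1}^{\mathrm{redUpper}}\left(\omega\right)\right|$, so the event $\left|\badb_{m,m-1}^{\mathrm{redLength}}\left(\omega\right)\right|>2\alpha\len_{m-1}\left(\omega\right)$ is contained in the union of $\left|\badb_{m,m-1}^{\mathrm{redLower}}\left(\omega\right)\right|>\alpha\len_{m-1}\left(\omega\right)$ and $\left|\badb_{m,m-1}^{\mathrm{redUpper}}\left(\omega\right)\right|>\alpha\len_{m-1}\left(\omega\right)$. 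Intersecting with $\left\{\omega\in\regwalksall\right\}$ and applying the union bound, the probability in the statement is at most the sum of $\Pr_{\omega}\left(\left|\badb_{m,m-1}^{\mathrm{redUpper}}\left(\omega\right)\right|>\alpha\len_{m-1}\left(\omega\right)\text{ and }\omega\in\regwalksall\right)$ and $\Pr_{\omega}\left(\left|\badb_{m,m-1}^{\mathrm{redLower}}\left(\omega\right)\right|>\alpha\len_{m-1}\left(\omega\right)\text{ and }\omega\in\regwalksall\right)$. The first inequality of Condition \ref{crit:alpha_bound_for_redLength_bound} is exactly the hypothesis of Lemma \ref{lem:bad_upper_m_bound}(1) (whose remaining hypothesis, Condition \ref{crit:ratios_criterion}, is assumed here), so the first term is $<\exp\left(-\frac{1}{4R_{m}^{\mathrm{upper}}}N_{m-1}\alpha^{2}\right)$; the second inequality of Condition \ref{crit:alpha_bound_for_redLength_bound} is exactly the hypothesis of Lemma \ref{lem:bad_lower_m_bound}(1), so the second term is $<\exp\left(-\frac{1}{4}N_{m}\alpha^{2}\right)$. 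Adding these gives the claimed bound; the third inequality in Condition \ref{crit:alpha_bound_for_redLength_bound} plays no role here and is bundled into the Condition only for later use in Lemma \ref{lem:bad_redlength_local_m_m-1_bounds}. Part (2) is the verbatim analogue, replacing $\mathrm{redLength},\mathrm{redLower},\mathrm{redUpper}$ by $\mathrm{length},\mathrm{lower},\mathrm{upper}$ and $\badb_{m,m-1}$ by $\badb_{m,0}$, invoking Lemma \ref{lem:bad_upper_m_bound}(2) and Lemma \ref{lem:bad_lower_m_bound}(2) under the two inequalities of Condition \ref{crit:alpha_bound_for_length_bound}.

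There is essentially no mathematical obstacle; the work is purely bookkeeping. The two points worth checking carefully are (i) that the $\alpha$-inequalities listed in Conditions \ref{crit:alpha_bound_for_redLength_bound} and \ref{crit:alpha_bound_for_length_bound} match, term by term, the hypotheses of the four source statements, and (ii) that $\len_{0}\left(\omega\right)=\len\left(\omega\right)$, so that Lemma \ref{lem:bad_upper_m_bound}(2), phrased with $\len_{0}$, and Lemma \ref{lem:bad_lower_m_bound}(2), phrased with $\len$, both match the $\len\left(\omega\right)$ appearing in part (2) of the present lemma.
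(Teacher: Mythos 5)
Your proposal is correct and is essentially identical to the paper's own proof: both decompose $\badb_{m,m-1}^{\mathrm{redLength}}$ (resp.\ $\badb_{m,0}^{\mathrm{length}}$) as the union of its lower and upper variants, apply a union bound, and then invoke Lemma~\ref{lem:bad_upper_m_bound} and Lemma~\ref{lem:bad_lower_m_bound} via the corresponding inequalities in Conditions~\ref{crit:alpha_bound_for_redLength_bound} and~\ref{crit:alpha_bound_for_length_bound}. Your additional remarks — that the third inequality of Condition~\ref{crit:alpha_bound_for_redLength_bound} is unused here and that $\len_{0}(\omega)=\len(\omega)$ reconciles the phrasing of the two source lemmas — are accurate bookkeeping observations.
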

\begin{proof}
The proof is a straightforward combination of Lemma \ref{lem:bad_lower_m_bound} and Lemma \ref{lem:bad_upper_m_bound}.

By Lemma \ref{lem:bad_upper_m_bound}, since Condition \ref{crit:ratios_criterion} holds, and since by Condition \ref{crit:alpha_bound_for_redLength_bound} \[\alpha>4\exp\left(-\frac{1}{432}\frac{R_{m}^{\mathrm{upper}}}{L_{m}^{2}}\right),\]
we conclude
\begin{multline*}
\Pr_{\omega}\left(\left|\badb_{m,m-1}^{\mathrm{redUpper}}\left(\omega\right)\right|>
\alpha\len_{m-1}\left(\omega\right)
\text{ and } \omega\in\regwalksall\right)
\\<\exp\left(-\frac{1}{4R_{m}^{\mathrm{upper}}}N_{m-1} {\alpha}^{2}\right).
\end{multline*}
By \ref{lem:bad_lower_m_bound}, since $\alpha>2 R_{m}^{\mathrm{lower}}\exp\left(-\frac{L_{m}^{2}}{R_{m}^{\mathrm{lower}}}\right)$ 
by Condition \ref{crit:alpha_bound_for_redLength_bound}, then:
\[
\Pr_{\omega}\left(\left|\badb_{m,m-1}^{\mathrm{redLower}}\left(\omega\right)\right|>
\alpha\len_{m-1}\left(\omega\right)
\text{ and } \omega\in\regwalksall\right)
<\exp\left(-\frac{1}{4}N_{m}\alpha^2\right),
\]
and together, since 
$\left|\badb_{m,m-1}^{\mathrm{redLower}}\left(\omega\right)\right|+\left|\badb_{m,m-1}^{\mathrm{redUpper}}\left(\omega\right)\right|\ge
\left|\badb_{m,m-1}^{\mathrm{redLength}}\left(\omega\right)\right|$, we get:
\begin{multline*}
\Pr_{\omega}\left(\left|\badb_{m,m-1}^{\mathrm{redLength}}\left(\omega\right)\right|>
2\alpha\len_{m-1}\left(\omega\right)
\text{ and } \omega\in\regwalksall
\right) \\ 
\begin{aligned} \le&
\Pr_{\omega}\left(\left|\badb_{m,m-1}^{\mathrm{redLower}}\left(\omega\right)\right|+
\left|\badb_{m,m-1}^{\mathrm{redUpper}}\left(\omega\right)\right|>
2\alpha\len_{m-1}\left(\omega\right)
\text{ and } \omega\in\regwalksall
\right) \\ \le&
\Pr_{\omega}\left(\left|\badb_{m,m-1}^{\mathrm{redLower}}\left(\omega\right)\right|>
\alpha\len_{m-1}\left(\omega\right)
\text{ and } \omega\in\regwalksall
\right) +\\&+
\Pr_{\omega}\left(\left|\badb_{m,m-1}^{\mathrm{redUpper}}\left(\omega\right)\right|>
\alpha\len_{m-1}\left(\omega\right)
\text{ and } \omega\in\regwalksall
\right) \\ <&
\exp\left(-\frac{1}{4R_{m}^{\mathrm{upper}}}N_{m-1} {\alpha}^{2}\right) +
\exp\left(-\frac{1}{4}N_{m}\alpha^2\right).
\end{aligned}
\end{multline*}

The proof of (2) follows in a similar way from the same lemmas and the relevant condition.

\end{proof}

\subsection{Intervals Where the Random Walk Induces Atypical Local Time Measure}
\label{subsection:local_time}
Consider the probability distribution of the position $X^{\omega}_{t}$. Informally speaking, In a typical random walk, this position is unlikely to spend a lot of time 
in specific places or specific intervals. If this typical behaviour doesn't occur, then the adversary can use this to confuse us. Thus, we wish to remove from the walk this times when this atypical behavior takes place.
\begin{defn}
For $m\in\left\{1,\dots,k\right\},\omega\in\walks$ and for $j\in\left\{ 0,1,\dots,\len_{m}\left(\omega\right)-1\right\} $
we define:
\[
\overline{MLT}_{m,j}\left(\omega\right)=\max_{x\in\mathbb{Z}}\left|\left\{ t\in I_{m,j}\left(\omega\right):X_{t}^{\red\left(\omega,L_{1}\cdots L_{m-1}\right)}=x\right\} \right|.
\]
\end{defn}

In the following definition, $\beta_{m}$ is a parameter 
that will be chosen later, as we noted in Remark \ref{rem:implicit_parameters}.
\begin{defn}
\label{def:badblocal}
For $m\in\left\{1,\dots,k\right\}$ and $\omega\in\walks$:
\[
\badb_{m}^{\mathrm{local}}\left(\omega\right)\coloneqq\left\{ j\in\left\{ 0,1,\dots,\len_{m}\left(\omega\right)-1\right\} :\overline{MLT}_{m,j}\left(\omega\right)>\frac{\beta_{m}}{2}\left|I_{m,j}\left(\omega\right)\right|\right\}.
\]
Moreover, $\goodb_{m}^{\mathrm{local}}\left(\omega\right),\badb_{m,m'}^{\mathrm{local}}\left(\omega\right)$ and $\goodb_{m,m'}^{\mathrm{local}}\left(\omega\right)$ 
are defined as in Definition \ref{def:badb_asterix_at_levels}.
\end{defn}

\subsubsection{Probability of spending too much time in $\protect\badb_{m,m-1}^{\mathrm{local}}\left(\omega\right)$}
\begin{lem}
\label{cor:bound_prob_badb_local}
\label{lem:0inbadlocal_prob}
Assume Condition \ref{crit:ratios_criterion}, and let $m\in\left\{1,\dots,k\right\}$. Then:
\[
\Pr_{\omega}\left(0\in\badb_{m}^{\mathrm{local}}\left(\omega\right)\cap\goodb_{m}^{\mathrm{redLength}}\left(\omega\right)\right)
< 4 L_{m} \exp\left(-\frac{ \beta_{m} R^{\mathrm{lower}}_{m} }{L_{m}}\right).
\]
\end{lem}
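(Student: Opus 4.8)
The plan is to bound the probability that the first interval of level $m$ is simultaneously short-in-local-time-trouble, i.e.\ $0\in\badb_m^{\mathrm{local}}(\omega)$, and has reasonable length, i.e.\ $0\in\goodb_m^{\mathrm{redLength}}(\omega)$. Being in $\goodb_m^{\mathrm{redLength}}(\omega)$ means $R^{\mathrm{lower}}_m \le |I_{m,0}(\omega)| \le R^{\mathrm{upper}}_m$. Conditioning on this, the event $0\in\badb_m^{\mathrm{local}}(\omega)$ says that for some $x\in\mathbb{Z}$ the reduced walk $\red(\omega,L_1\cdots L_{m-1})$ spends more than $\frac{\beta_m}{2}|I_{m,0}(\omega)|$ of its time at $x$ during the interval $I_{m,0}(\omega)$. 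Since $|I_{m,0}(\omega)| \ge R^{\mathrm{lower}}_m$, this local time is at least $\frac{\beta_m}{2} R^{\mathrm{lower}}_m$.

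First I would fix the relevant level-$(m-1)$ reduced walk $\nu = \red(\omega,L_1\cdots L_{m-1})$, which is again a simple random walk, and recall that $I_{m,0}(\omega)$ in $\nu$-coordinates is exactly $\{0,\dots,\next(\nu,L_m,0)-1\}$, a window in which $\nu$ stays within $(-L_m,L_m)$. So the position $x$ witnessing the local-time excess lies in $\{-L_m+1,\dots,L_m-1\}$, which gives at most $2L_m-1 < 2L_m$ candidate values of $x$; this is where the factor $L_m$ in the bound comes from. For each fixed such $x$, I would bound the probability that $\nu$ accumulates local time $\ge \frac{\beta_m}{2}R^{\mathrm{lower}}_m$ at $x$ before leaving the window. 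Each return to $x$ requires the walk to travel from $x$ and come back, i.e.\ a fresh excursion; by the strong Markov property these are i.i.d., and the probability that a simple random walk started at $x$ returns to $x$ before exiting an interval containing $x$ with the nearer endpoint at distance at least $1$ and, more usefully, using gambler's-ruin type estimates with the window of half-width $L_m$, the return probability is at most $1 - c/L_m$ for an absolute constant $c$. Hence the chance of at least $r := \frac{\beta_m}{2}R^{\mathrm{lower}}_m$ returns is at most $(1-c/L_m)^{r} \le \exp(-c r/L_m) = \exp\!\left(-\tfrac{c\,\beta_m R^{\mathrm{lower}}_m}{2 L_m}\right)$.

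Then I would remove the conditioning: $\Pr_\omega(0\in\badb_m^{\mathrm{local}}\cap\goodb_m^{\mathrm{redLength}}) \le \Pr(\text{some }x\in(-L_m,L_m)\text{ gets local time}\ge \tfrac{\beta_m}{2}R^{\mathrm{lower}}_m)$, and a union bound over the fewer than $2L_m$ values of $x$ multiplies the per-$x$ estimate by $2L_m$, giving at most $2L_m\exp\!\left(-\tfrac{c\beta_m R^{\mathrm{lower}}_m}{2L_m}\right)$. Tracking constants so that $c/2 \ge 1$ (which follows from the gambler's-ruin constant together with Condition~\ref{crit:ratios_criterion}, in particular $L_m\ge 1000$, which makes the elementary bounds on return probabilities clean), and absorbing the difference between $2L_m$ and $4L_m$ into the statement, yields exactly $4L_m\exp\!\left(-\tfrac{\beta_m R^{\mathrm{lower}}_m}{L_m}\right)$. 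The main obstacle I anticipate is pinning down the correct absolute constant in the return-probability estimate $1-c/L_m$ and verifying it survives all the way through the exponent so that the clean coefficient $1$ (rather than some fraction) appears in front of $\beta_m R^{\mathrm{lower}}_m/L_m$; this is where Condition~\ref{crit:ratios_criterion} and a careful choice of how much slack to leave (the gap between $2L_m$ and $4L_m$, and between $r$ and $2r$) must be used. Everything else is a routine assembly of the strong Markov property, gambler's ruin, and a union bound.
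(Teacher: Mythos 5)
Your approach coincides with the paper's: pass to the level-$(m-1)$ reduced walk $\nu=\red(\omega,L_1\cdots L_{m-1})$, observe that $I_{m,0}(\omega)$ is the window $\{0,\dots,\next(\nu,L_m,0)-1\}$ on which $\nu$ stays in $(-L_m,L_m)$, bound by gambler's ruin the probability that $\nu$ returns to a fixed $x$ many times before exiting, and union-bound over the fewer than $2L_m$ candidate positions. The paper computes the per-position escape probability exactly: from $x=k$ with $|k|<L_m$, the probability of \emph{not} returning before exit is $\frac{L_m}{L_m^2-k^2}\ge\frac{1}{L_m}$, so the return probability is at most $1-\frac{1}{L_m}$; in your notation this is $c=1$, sharp at $k=0$, so your hope that ``the gambler's-ruin constant'' gives $c\ge2$ is not available. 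As you anticipated, with $c=1$ and $r=\frac{\beta_m}{2}R^{\mathrm{lower}}_m$ (which is all the definition of $\badb_m^{\mathrm{local}}$ supplies, after using $|I_{m,0}(\omega)|\ge R^{\mathrm{lower}}_m$ from $\goodb_m^{\mathrm{redLength}}$), the exponent lands at $-\frac{\beta_m R^{\mathrm{lower}}_m}{2L_m}$, not the lemma's $-\frac{\beta_m R^{\mathrm{lower}}_m}{L_m}$. The paper's own proof elides this: it substitutes $n=\beta_m R^{\mathrm{lower}}_m$ where the definition (with its $\frac{\beta_m}{2}$) only justifies $n=\frac{\beta_m}{2}R^{\mathrm{lower}}_m$, so your unease about the constant reflects a genuine factor-of-two slippage in the paper's write-up, not a defect unique to your argument. (This is immaterial downstream, since $\beta_m$ is a free parameter fixed only in Definition~\ref{defn:sinwalks} and the factor of two can be absorbed there.) Apart from this constant-tracking point, on which neither you nor the paper gives a fully rigorous account, the two arguments are the same.
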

\begin{proof}
The solution to the classical gambler's ruin problem states that a simple random walk starting at $0$ that is finished on the first time that the walker reaches some points 
$a>0$ or $-b<0$, has a chance $\frac{b}{a+b}$ to reach $a$ first, and a chance $\frac{a}{a+b}$ to reach $b$ first (cf. \cite{feller}, chapter XIV, section 2, p.344).

Consider a random walk starting at $0$. What is the probability that the walk returns to $0$ before reaching $L$ or $-L$? After the first step, 
without loss of generality, the walker stands at $1$. Then the probability that the walker reaches $0$ before $L$ is known to be $\frac{L-1}{L}=1-\frac{1}{L}$. 

Now, what is the probability that the walker visits some $k$ at least $n$ times before reaching $\pm L$? Assume $k\ge0$. The chance we reach $k$ before $-L$ is 
$\frac{L}{L+k}$. Then we step either to $k-1$ or $k+1$, and we ask whether we reach $-L$ or $L$ respectively before returning to $k$. 
The probability that we don't is:
\[
\frac{1}{2} \left(1-\frac{1}{L-k}\right) + \frac{1}{2} \left(1-\frac{1}{L+k}\right)
=
1-\frac{L}{L^{2}-k^{2}}.
\]
Thus the probability that we visit $k$ exactly $n$ times before reaching $\pm L$ is:
\[
\frac{L}{L+k} \left( 1-\frac{L}{L^{2}-k^{2}} \right)^{n-1}
< \left( 1-\frac{1}{L\left(1-\frac{k^{2}}{L^{2}}\right)} \right)^{n-1},
\]
and this is an upper bound on the probability that the walk visits $k$ (or $-k$) more than $n$ times before it reaches $\pm L$. 
We can use a union bound, and get that the probability that a random walks visits any location more than $n$ times before it reaches $\pm L$ is bounded by 
:
\begin{equation}
\label{eq:nice_local_time_bound}
\sum_{k=-L+1}^{L-1} \left( 1-\frac{1}{L\left(1-\frac{k^{2}}{L^{2}}\right)} \right)^{n-1} <
2L \left(1-\frac{1}{L}\right)^{n-1}.
\end{equation}

Now, if $0\in\badb_{m}^{\mathrm{local}}\left(\omega\right)\cap\goodb_{m}^{\mathrm{redLength}}\left(\omega\right)$ then the walk $\red\left(\omega,L_{1}\cdots L_{m-1}\right)$ 
visits some place more than $\beta_{m}\left|I_{m,0}\left(\omega\right)\right|$, which is larger than $\beta_{m} R^{\mathrm{lower}}_{m}$, before it reaches $\pm L_{m}$.
Thus, \eqref{eq:nice_local_time_bound} and Condition \ref{crit:ratios_criterion} yield:
\begin{align*}
\Pr_{\omega}\left(0\in\badb_{m}^{\mathrm{local}}\left(\omega\right)\cap\goodb_{m}^{\mathrm{redLength}}\left(\omega\right)\right)
&< 2 L_{m} \left(1-\frac{1}{L_{m}}\right)^{ \beta_{m} R^{\mathrm{lower}}_{m} -1}
\\&< 4 L_{m} \exp\left(-\frac{ \beta_{m} R^{\mathrm{lower}}_{m} }{L_{m}}\right).
\end{align*}
\end{proof}

\begin{lem}
\label{lem:bad_redlength_local_m_m-1_bounds}
Let $m\in\left\{1,\dots,k\right\}$ and $0<\alpha<1$. 
Suppose Condition \ref{crit:ratios_criterion} holds, and that $\frac{\alpha}{8}$ and $m$ satisfy Condition \ref{crit:alpha_bound_for_redLength_bound}. Then:

\begin{multline*}
Pr_{\omega}\left(\left|\badb^{\mathrm{local}}_{m,m-1}\left(\omega\right) \cup \badb_{m,m-1}^{\mathrm{redLength}}\left(\omega\right)\right|>
4\alpha\len_{m-1}\left(\omega\right)\text{ and } \omega\in\regwalksall\right)
\\ < 
2\exp\left(-\frac{1}{64R_{m}^{\mathrm{upper}}}N_{m-1} {\alpha}^{2} \right) +
4\exp\left(-\frac{1}{256}N_{m}\left( \alpha \frac{R^{\mathrm{lower}}_{m}}{R^{\mathrm{upper}}_{m}} \right)^{2}\right).
\end{multline*}
\end{lem}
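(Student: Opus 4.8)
The plan is to split the set $\badb^{\mathrm{local}}_{m,m-1}(\omega)\cup\badb_{m,m-1}^{\mathrm{redLength}}(\omega)$ into a ``length'' part, controlled by Lemma~\ref{lem:bad_lengths_m_bounds}~(1), and a genuinely ``local'' part, and to reduce the latter to a statement about a \emph{count} of level-$m$ blocks, where the Markov property of the reduced walk makes the per-block events independent so that Lemma~\ref{lem:0inbadlocal_prob} and Lemma~\ref{lem:chernoff_pbin_bound} apply. First one may assume $\alpha<\tfrac14$, since otherwise $4\alpha\len_{m-1}(\omega)\ge\len_{m-1}(\omega)$ while the set in question is contained in $\{0,\dots,\len_{m-1}(\omega)-1\}$, so the event is empty. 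Since the intervals $I_{m,j}(\omega)$ with $j\in\{0,\dots,\len_m(\omega)-1\}$ partition $\{0,\dots,\len_{m-1}(\omega)-1\}$, we have $\badb^{\mathrm{local}}_{m,m-1}(\omega)\setminus\badb_{m,m-1}^{\mathrm{redLength}}(\omega)=\bigcup_{j\in\badb_m^{\mathrm{local}}(\omega)\cap\goodb_m^{\mathrm{redLength}}(\omega)}I_{m,j}(\omega)$, and as $\left|I_{m,j}(\omega)\right|\le R_m^{\mathrm{upper}}$ for $j\in\goodb_m^{\mathrm{redLength}}(\omega)$ this gives
\[
\left|\badb^{\mathrm{local}}_{m,m-1}\left(\omega\right)\cup\badb_{m,m-1}^{\mathrm{redLength}}\left(\omega\right)\right|\le\left|\badb_{m,m-1}^{\mathrm{redLength}}\left(\omega\right)\right|+R_m^{\mathrm{upper}}\left|\badb_m^{\mathrm{local}}\left(\omega\right)\cap\goodb_m^{\mathrm{redLength}}\left(\omega\right)\right|.
\]
Hence the event forces $\left|\badb_{m,m-1}^{\mathrm{redLength}}(\omega)\right|>2\alpha\len_{m-1}(\omega)$ or $R_m^{\mathrm{upper}}\left|\badb_m^{\mathrm{local}}(\omega)\cap\goodb_m^{\mathrm{redLength}}(\omega)\right|>2\alpha\len_{m-1}(\omega)$. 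The first of these together with $\omega\in\regwalksall$ has probability $<\exp(-\tfrac{1}{4R_m^{\mathrm{upper}}}N_{m-1}\alpha^2)+\exp(-\tfrac14 N_m\alpha^2)$ by Lemma~\ref{lem:bad_lengths_m_bounds}~(1) --- applicable since $\alpha>\tfrac\alpha8$ inherits Condition~\ref{crit:alpha_bound_for_redLength_bound} --- and both summands are dominated by the corresponding terms of the claim because $\tfrac14>\tfrac1{64}>\tfrac1{256}$ and $\tfrac{R_m^{\mathrm{lower}}}{R_m^{\mathrm{upper}}}\le1$.

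For the local piece I would split further according to whether $\left|\badb_m^{\mathrm{redLower}}(\omega)\right|\le\tfrac12\len_m(\omega)$. Every block with $j\notin\badb_m^{\mathrm{redLower}}(\omega)$ has $\left|I_{m,j}(\omega)\right|\ge R_m^{\mathrm{lower}}$ (for $j\in\badb_m^{\mathrm{redUpper}}(\omega)$ because $R_m^{\mathrm{upper}}\ge R_m^{\mathrm{lower}}$), so in the first case $\len_{m-1}(\omega)\ge R_m^{\mathrm{lower}}\bigl(\len_m(\omega)-\left|\badb_m^{\mathrm{redLower}}(\omega)\right|\bigr)\ge\tfrac12 R_m^{\mathrm{lower}}\len_m(\omega)$, and the inequality $R_m^{\mathrm{upper}}\left|\badb_m^{\mathrm{local}}(\omega)\cap\goodb_m^{\mathrm{redLength}}(\omega)\right|>2\alpha\len_{m-1}(\omega)$ becomes $\left|\badb_m^{\mathrm{local}}(\omega)\cap\goodb_m^{\mathrm{redLength}}(\omega)\right|>\alpha\tfrac{R_m^{\mathrm{lower}}}{R_m^{\mathrm{upper}}}\len_m(\omega)$. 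Now I argue exactly as in the proof of Lemma~\ref{lem:bad_lower_m_bound_without_asymptotics}: restricting to $\regwalks_{\ge}^{m}$ and conditioning on the sign sequence of the level-$m$ reduced walk, reflection symmetry keeps the \emph{shapes} of the level-$m$ excursions i.i.d., hence keeps the indicators $\{j\in\badb_m^{\mathrm{local}}(\omega)\cap\goodb_m^{\mathrm{redLength}}(\omega)\}$ i.i.d.\ with common probability $\Pr_\omega(0\in\badb_m^{\mathrm{local}}(\omega)\cap\goodb_m^{\mathrm{redLength}}(\omega))$; combining with Lemma~\ref{lem:0inbadlocal_prob} this probability is $<4L_m\exp(-\tfrac{\beta_m R_m^{\mathrm{lower}}}{L_m})$, and the third line of Condition~\ref{crit:alpha_bound_for_redLength_bound} (applied to $\tfrac\alpha8$) is precisely what lets us invoke Lemma~\ref{lem:chernoff_pbin_bound} with $t=\tfrac12$ and $\bar p=\alpha\tfrac{R_m^{\mathrm{lower}}}{R_m^{\mathrm{upper}}}$, yielding a bound $<\exp(-\tfrac14 N_m(\alpha\tfrac{R_m^{\mathrm{lower}}}{R_m^{\mathrm{upper}}})^2)$.

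In the remaining case $\left|\badb_m^{\mathrm{redLower}}(\omega)\right|>\tfrac12\len_m(\omega)$, the same conditioning argument (now for the indicators $\{j\in\badb_m^{\mathrm{redLower}}(\omega)\}$, which is literally the intermediate step in the proof of Lemma~\ref{lem:bad_lower_m_bound_without_asymptotics}) bounds the probability of this event intersected with $\regwalksall$ by $\pbin(N_m,\Pr_\omega(0\in\badb_m^{\mathrm{redLower}}(\omega)),\tfrac12)$; since Lemma~\ref{lem:probablistic lower bound on next} and the second line of Condition~\ref{crit:alpha_bound_for_redLength_bound} give $\Pr_\omega(0\in\badb_m^{\mathrm{redLower}}(\omega))<R_m^{\mathrm{lower}}\exp(-L_m^2/R_m^{\mathrm{lower}})<\tfrac\alpha{16}<\tfrac1{16}$, Lemma~\ref{lem:chernoff_pbin_bound} with $t=\tfrac12$ makes this $<\exp(-\tfrac{1}{16}N_m)$. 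Adding up the contributions --- the single $N_{m-1}$-term absorbed into $2\exp(-\tfrac{1}{64R_m^{\mathrm{upper}}}N_{m-1}\alpha^2)$, and the three $N_m$-terms $\exp(-\tfrac14 N_m\alpha^2)$, $\exp(-\tfrac14 N_m(\alpha\tfrac{R_m^{\mathrm{lower}}}{R_m^{\mathrm{upper}}})^2)$, $\exp(-\tfrac1{16}N_m)$ each bounded by $\exp(-\tfrac1{256}N_m(\alpha\tfrac{R_m^{\mathrm{lower}}}{R_m^{\mathrm{upper}}})^2)$ via $x\mapsto e^{-cx}$ being decreasing and $\tfrac{R_m^{\mathrm{lower}}}{R_m^{\mathrm{upper}}}\le1$ --- gives the asserted inequality.

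The one genuinely delicate point is the passage, for the local piece, from a statement about the level-$(m-1)$ volume $\len_{m-1}(\omega)$ to one about the level-$m$ block count $\len_m(\omega)$: blocks in $\badb_m^{\mathrm{redLower}}(\omega)$ can be as short as $L_m\ll R_m^{\mathrm{lower}}$, so $\len_{m-1}(\omega)\ge R_m^{\mathrm{lower}}\len_m(\omega)$ can fail, which is exactly why the auxiliary dichotomy on $\left|\badb_m^{\mathrm{redLower}}(\omega)\right|$ is forced. Everything else is bookkeeping together with the routine verification that the hypotheses of the lemmas cited (Lemmas~\ref{lem:bad_lengths_m_bounds}, \ref{lem:bad_lower_m_bound_without_asymptotics}, \ref{lem:probablistic lower bound on next}, \ref{lem:chernoff_pbin_bound}) follow from Condition~\ref{crit:ratios_criterion} and from Condition~\ref{crit:alpha_bound_for_redLength_bound} applied to $\tfrac\alpha8$.
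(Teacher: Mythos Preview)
Your proof is correct and follows essentially the same approach as the paper: split off $\badb^{\mathrm{redLength}}_{m,m-1}$ via Lemma~\ref{lem:bad_lengths_m_bounds}, convert the level-$(m-1)$ volume of the local-on-good-length piece into a level-$m$ block count by a dichotomy on how many level-$m$ blocks are short, and then bound that count using the i.i.d.\ structure of the level-$m$ excursion shapes together with Lemma~\ref{lem:0inbadlocal_prob} and the Chernoff bound. The only differences are organizational --- your dichotomy is on $|\badb^{\mathrm{redLower}}_m|$ where the paper instead splits on $|\goodb^{\mathrm{redLength}}_m|$ and $|\goodb^{\mathrm{redLength}}_{m,m-1}|$, and you use threshold $2\alpha$ where the paper uses $\tfrac12$ in a couple of places --- but the three resulting error terms and the way they are absorbed into the stated bound are the same.
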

\begin{proof}
First we wish to give a probabilistic bound on the size of  $\badb^{\mathrm{local}}_{m,m-1}\left(\omega\right) \cap \goodb^{\mathrm{redLength}}_{m,m-1}\left(\omega\right)$.
For the sake of brevity, denote:
\[
A_{m} = \badb^{\mathrm{local}}_{m}\left(\omega\right) \cap \goodb^{\mathrm{redLength}}_{m}\left(\omega\right),
\]
\[
A_{m-1} = \badb^{\mathrm{local}}_{m,m-1}\left(\omega\right) \cap \goodb^{\mathrm{redLength}}_{m,m-1}\left(\omega\right).
\]
Then 
\begin{equation}\label{eq:A_m A_m-1}
\left|A_{m-1}\right|<R^{\mathrm{upper}}_{m} \left|A_{m}\right|
\end{equation} 
and 
\begin{equation}\label{eq:good m m-1}
\left| \goodb^{\mathrm{redLength}}_{m,m-1}\left(\omega\right) \right|>R^{\mathrm{lower}}_{m} \left| \goodb^{\mathrm{redLength}}_{m}\left(\omega\right) \right|.
\end{equation}
Therefore:
\begin{multline}\label{eq:Pr estimate1}
\Pr_{\omega}\left( 
	\left|A_{m-1}\right|>\alpha 
	\len_{m-1}\left(\omega\right)
\text{ and } \omega\in\regwalksall \right)
\\
\begin{aligned}
<&
\Pr_{\omega}\left( 
	\left|A_{m-1}\right|>2\alpha 
	\left| \goodb^{\mathrm{redLength}}_{m,m-1}\left(\omega\right) \right|
\text{ and } \omega\in\regwalksall \right)
+\\&+
\Pr_{\omega}\left( 
	\left| \goodb^{\mathrm{redLength}}_{m,m-1}\left(\omega\right) \right|
	<\frac{1}{2}\len_{m-1}\left(\omega\right)
\text{ and } \omega\in\regwalksall \right).
\end{aligned}
\end{multline}
Applying \eqref{eq:good m m-1} and \eqref{eq:A_m A_m-1}, 
and the fact that $\regwalks_{\ge}^{m}\supset \regwalksall$,
it follows that
\begin{align*}
\eqref{eq:Pr estimate1}<&
\Pr_{\omega}\left( 
	\left|A_{m}\right|>2\alpha \frac{R^{\mathrm{lower}}_{m}}{R^{\mathrm{upper}}_{m}}
	\left| \goodb^{\mathrm{redLength}}_{m}\left(\omega\right) \right|
\text{ and } \omega\in\regwalks_{\ge}^{m} \right)
+\\&+
\Pr_{\omega}\left( 
	\left| \goodb^{\mathrm{redLength}}_{m,m-1}\left(\omega\right) \right|
	<\frac{1}{2}\len_{m-1}\left(\omega\right)
\text{ and } \omega\in\regwalksall \right)
\end{align*}
hence
\begin{equation}\label{eq: three terms}
\begin{aligned}
\eqref{eq:Pr estimate1}<&
\Pr_{\omega}\left( 
	\left|A_{m}\right|>\alpha \frac{R^{\mathrm{lower}}_{m}}{R^{\mathrm{upper}}_{m}}
	\len_{m}\left(\omega\right)
\text{ and } \omega\in\regwalks_{\ge}^{m} \right)
+\\&+
\Pr_{\omega}\left( 
	\left| \goodb^{\mathrm{redLength}}_{m}\left(\omega\right) \right|
	<\frac{1}{2}\len_{m}\left(\omega\right)
\text{ and } \omega\in\regwalks_{\ge}^{m} \right)
+\\&+
\Pr_{\omega}\left( 
	\left| \goodb^{\mathrm{redLength}}_{m,m-1}\left(\omega\right) \right|
	<\frac{1}{2}\len_{m-1}\left(\omega\right)
\text{ and } \omega\in\regwalksall \right).
\end{aligned}
\end{equation}

Now, by Lemma \ref{cor:bound_prob_badb_local} and our assumptions, the first term in \eqref{eq: three terms} may be bounded as follows:
\begin{multline*}
\Pr_{\omega}\left( 
	\left|A_{m}\right|>\alpha \frac{R^{\mathrm{lower}}_{m}}{R^{\mathrm{upper}}_{m}}
	\len_{m}\left(\omega\right)
\middle| \omega\in\regwalks_{\ge}^{m} \right) \\
\begin{aligned} <&
\Pr_{\omega}\left( 
	\left|A_{m}\right|>\alpha \frac{R^{\mathrm{lower}}_{m}}{R^{\mathrm{upper}}_{m}}
	\len_{m}\left(\omega\right)
\middle| \len_{m}\left(\omega\right)=N_{m} \right) \\ =&
\pbin\left(N_{m},
\Pr_{\omega}\left(0\in\badb_{m}^{\mathrm{local}}\left(\omega\right)\cap\goodb_{m}^{\mathrm{redLength}}\left(\omega\right)\right),
\alpha \frac{R^{\mathrm{lower}}_{m}}{R^{\mathrm{upper}}_{m}}\right) \\ <&
\pbin\left(N_{m},
4 L_{m} \exp\left(-\frac{ \beta_{m} R^{\mathrm{lower}}_{m} }{L_{m}}\right),
\alpha \frac{R^{\mathrm{lower}}_{m}}{R^{\mathrm{upper}}_{m}}\right).
\end{aligned}
\end{multline*}
Condition \ref{crit:alpha_bound_for_redLength_bound} implies that 
$
\frac{\alpha}{2} \frac{R^{\mathrm{lower}}_{m}}{R^{\mathrm{upper}}_{m}}
>
4 L_{m} \exp\left(-\frac{ \beta_{m} R^{\mathrm{lower}}_{m} }{L_{m}}\right)
$, 
so by Lemma \ref{lem:chernoff_pbin_bound}:
\begin{multline}
\label{eq:term1 bound}
\Pr_{\omega}\left( 
	\left|A_{m}\right|>\alpha \frac{R^{\mathrm{lower}}_{m}}{R^{\mathrm{upper}}_{m}}
	\len_{m}\left(\omega\right)
\text{ and } \omega\in\regwalks_{\ge}^{m} \right) \\ <
\pbin\left(N_{m},
\frac{\alpha}{2} \frac{R^{\mathrm{lower}}_{m}}{R^{\mathrm{upper}}_{m}}, 
\alpha \frac{R^{\mathrm{lower}}_{m}}{R^{\mathrm{upper}}_{m}}\right) <
\exp\left(-\frac{1}{16}N_{m}\left( \alpha \frac{R^{\mathrm{lower}}_{m}}{R^{\mathrm{upper}}_{m}} \right)^{2}\right) \\<
\exp\left(-\frac{1}{256}N_{m}\left( \alpha \frac{R^{\mathrm{lower}}_{m}}{R^{\mathrm{upper}}_{m}} \right)^{2}\right).
\end{multline}

The second term in \eqref{eq: three terms} may be bounded by:
\begin{multline}
\label{eq:term2 bound_part1}
\Pr_{\omega}\left( 
	\left| \goodb^{\mathrm{redLength}}_{m}\left(\omega\right) \right|
	<\frac{1}{2}\len_{m}\left(\omega\right)
\text{ and } \omega\in\regwalks_{\ge}^{m} \right) \\ 
\begin{aligned}
<&
\Pr_{\omega}\left( 
	\left| \badb^{\mathrm{redLength}}_{m}\left(\omega\right) \right|
	>\frac{1}{4}\len_{m}\left(\omega\right)
\middle| \omega\in\regwalks_{\ge}^{m} \right) \\ <&
\pbin\left(N_{m},\Pr_{\omega}\left(0\in\badb^{\mathrm{redLength}}_{m}\left(\omega\right)\right),\frac{1}{4}\right).
\end{aligned}
\end{multline}

Since Condition \ref{crit:ratios_criterion} implies that $R_{m}^{\mathrm{upper}}>80L_{m}^{2}$, $L_{m}\ge1000$, we see that
$\exp\left(-\frac{1}{27}\frac{R_{m}^{\mathrm{upper}}}{L_{m}^{2}}\right)<\frac{1}{16}$ and 
$R_{m}^{\mathrm{lower}}\exp\left(-\frac{L_{m}^{2}}{R_{m}^{\mathrm{lower}}}\right)<\frac{1}{16}$, 
and thus by Lemmas \ref{lem:probablistic lower bound on next} and \ref{cor:bound_prob_badb_lower}, 
$\Pr_{\omega}\left(0\in\badb^{\mathrm{redLength}}_{m}\left(\omega\right)\right)<\frac{1}{8}$. Therefore we get by Lemma \ref{lem:chernoff_pbin_bound}:
\begin{equation}
\label{eq:term2 bound}
\eqref{eq:term2 bound_part1}
< \exp\left(-\frac{1}{256}N_{m}\right)
< \exp\left(-\frac{1}{256}N_{m}\left( \alpha \frac{R^{\mathrm{lower}}_{m}}{R^{\mathrm{upper}}_{m}} \right)^{2}\right).
\end{equation}

For the third term, we use Lemma \ref{lem:bad_lengths_m_bounds}, whose premise holds since we assumed 
Condition \ref{crit:ratios_criterion} holds, and that some value $\frac{1}{2}>\frac{\alpha}{8}$ and $m$ satisfy Condition \ref{crit:alpha_bound_for_redLength_bound}:
\begin{multline}
\label{eq:term3 bound}
\Pr_{\omega}\left( 
	\left| \goodb^{\mathrm{redLength}}_{m,m-1}\left(\omega\right) \right|
	<\frac{1}{2}\len_{m-1}\left(\omega\right)
\text{ and } \omega\in\regwalksall \right) \\
\begin{aligned} = &
\Pr_{\omega}\left( 
	\left| \badb^{\mathrm{redLength}}_{m,m-1}\left(\omega\right) \right|
	>\frac{1}{2}\len_{m-1}\left(\omega\right)
\text{ and } \omega\in\regwalksall \right) \\ < &
\exp\left(-\frac{1}{64R_{m}^{\mathrm{upper}}}N_{m-1} \right) +
\exp\left(-\frac{1}{64}N_{m}\right) \\ < &
\exp\left(-\frac{1}{64R_{m}^{\mathrm{upper}}}N_{m-1} {\alpha}^{2} \right) +
\exp\left(-\frac{1}{256}N_{m}\left( \alpha \frac{R^{\mathrm{lower}}_{m}}{R^{\mathrm{upper}}_{m}} \right)^{2}\right).
\end{aligned}
\end{multline}

Now we may combine \eqref{eq: three terms},\eqref{eq:term1 bound},\eqref{eq:term2 bound} and \eqref{eq:term3 bound} to get:
\begin{equation}
\eqref{eq:Pr estimate1} < 
\exp\left(-\frac{1}{64R_{m}^{\mathrm{upper}}}N_{m-1} {\alpha}^{2} \right) +
3\exp\left(-\frac{1}{256}N_{m}\left( \alpha \frac{R^{\mathrm{lower}}_{m}}{R^{\mathrm{upper}}_{m}} \right)^{2}\right),
\end{equation}
and this yields the conclusion of this lemma.

\end{proof}

\subsection{Probability of spending too much time in $\badb_{m,0}\left(\omega\right)$}

\begin{defn}
\label{def:badb_all}
For $m,m'\in\left\{0,\dots,k\right\}$ such that $m'\le m$, $m\ge1$ and $\omega\in\walks$:
\[
\badb_{m}\left(\omega\right)\coloneqq\badb_{m}^{\mathrm{length}}\left(\omega\right)\cup\badb_{m}^{\mathrm{redLength}}\left(\omega\right)\cup\badb_{m}^{\mathrm{local}}\left(\omega\right).
\]
Moreover, $\goodb_{m}\left(\omega\right),\badb_{m,m'}\left(\omega\right)$ and $\goodb_{m,m'}\left(\omega\right)$ 
are defined as in Definition \ref{def:badb_asterix_at_levels}.
\end{defn}

Up until now, most of the work we did was regarding $\badb_{m}\left(\omega\right)$. In this subsection, we extend this to $\badb_{m,0}\left(\omega\right)$.
\begin{lem}
\label{lem:sum_of_delta_gt_epsilon}
\label{lem:sum_of_nexts}
Let $N$ be a positive integer, $\delta N$ also a positive integer for $0<\delta<\frac{1}{2}$. Consider a sequence of i.i.d random variables $X_{1},\dots,X_{N}$ with the same probability law as $\next\left(\omega,L,0\right)$. It holds that:
\[
\Pr_{\omega}\left(\sum_{i=1}^{\delta N}X_{i}> \sqrt{\delta} \sum_{i=1}^{N}X_{i}\right)
< \frac{5}{N}.
\]

\end{lem}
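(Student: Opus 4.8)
The plan is to rewrite the claimed inequality as a one-sided deviation estimate for a single random variable of positive mean and finite variance, and then to conclude by Chebyshev's inequality.

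Write $S=\sum_{i=1}^{\delta N}X_{i}$, $V=\sum_{i=\delta N+1}^{N}X_{i}$ (a sum of $(1-\delta)N\ge1$ terms, independent of $S$), $T=S+V=\sum_{i=1}^{N}X_{i}$, and set
\[
W=\sqrt{\delta}\,T-S=(\sqrt{\delta}-1)S+\sqrt{\delta}\,V,
\]
so that the event whose probability we must bound is precisely $\{W<0\}$. With $\mu=\mathbb{E}[X_{1}]$ and $\sigma^{2}=\Var(X_{1})$, linearity of expectation and independence of $S$ and $V$ give by a direct computation
\[
\mathbb{E}[W]=N\mu(\sqrt{\delta}-\delta)=N\mu\sqrt{\delta}(1-\sqrt{\delta})>0,\qquad\Var(W)=N\sigma^{2}\delta\bigl[(\sqrt{\delta}-1)^{2}+(1-\delta)\bigr]=2N\sigma^{2}\delta(1-\sqrt{\delta}).
\]
Since $\mathbb{E}[W]>0$, Chebyshev's inequality gives
\[
\Pr\!\left(\sum_{i=1}^{\delta N}X_{i}>\sqrt{\delta}\sum_{i=1}^{N}X_{i}\right)=\Pr(W<0)\le\Pr\bigl(\lvert W-\mathbb{E}[W]\rvert\ge\mathbb{E}[W]\bigr)\le\frac{\Var(W)}{\mathbb{E}[W]^{2}}=\frac{2\sigma^{2}}{N\mu^{2}(1-\sqrt{\delta})}.
\]

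Next I would control the ratio $\sigma^{2}/\mu^{2}$. The variable $X_{1}$ is distributed as the exit time $\tau=\next(\omega,L,0)$ of a simple random walk from $\{-L+1,\dots,L-1\}$; this $\tau$ has finite moments (in particular $\mathbb{E}[\tau]<\infty$), and the classical moment identities for the gambler's ruin --- obtainable by optional stopping applied to standard polynomial martingales of the walk, such as $(X_{t}^{\omega})^{2}-t$ and $(X_{t}^{\omega})^{4}-\sum_{s<t}(6(X_{s}^{\omega})^{2}+1)$ (legitimate since $\lvert X_{t}^{\omega}\rvert\le L$ for $t\le\tau$), or found in \cite{feller}, Chapter~XIV --- give
\[
\mu=\mathbb{E}[\tau]=L^{2},\qquad\mathbb{E}[\tau^{2}]=\tfrac{5}{3}L^{4}-\tfrac{2}{3}L^{2},\qquad\sigma^{2}=\Var(\tau)=\tfrac{2}{3}L^{2}(L^{2}-1).
\]
In particular $\sigma^{2}/\mu^{2}=\tfrac{2}{3}(1-L^{-2})<\tfrac{2}{3}$. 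Since $\delta<\tfrac{1}{2}$ forces $1-\sqrt{\delta}>1-\tfrac{1}{\sqrt{2}}$, plugging these into the previous display yields
\[
\Pr\!\left(\sum_{i=1}^{\delta N}X_{i}>\sqrt{\delta}\sum_{i=1}^{N}X_{i}\right)<\frac{2\cdot\tfrac{2}{3}}{N\bigl(1-\tfrac{1}{\sqrt{2}}\bigr)}=\frac{8+4\sqrt{2}}{3N}<\frac{5}{N},
\]
the final inequality being equivalent to $4\sqrt{2}<7$.

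The one genuinely delicate ingredient is the estimate of $\sigma^{2}$: the target constant $5$ leaves essentially no room --- the argument produces $\tfrac{8+4\sqrt{2}}{3}\approx4.55$ --- so a crude bound on $\Var(\tau)$ would not close the gap, and one really needs (more or less) the exact second moment $\mathbb{E}[\tau^{2}]=\tfrac{5}{3}L^{4}-\tfrac{2}{3}L^{2}$; the rest is a routine second-moment computation. (Alternatively, one could bound the upper tail of $S$ and the lower tail of $T$ separately via the exponential tail estimate of Lemma~\ref{lem:prob lower bound on next of walk}, but that route requires $L$ to be large and is clumsier than the second-moment argument above, which works uniformly in $L$.)
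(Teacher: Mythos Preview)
Your proof is correct and follows essentially the same route as the paper: define a single linear combination of the two partial sums (your $W$ is the negative of the paper's $Y$), compute its mean and variance using the exact first two moments $\mathbb{E}[\tau]=L^{2}$ and $\Var(\tau)=\tfrac{2}{3}L^{2}(L^{2}-1)$ of the gambler's-ruin exit time, and conclude by Chebyshev; both arguments land on the same constant $\tfrac{8+4\sqrt{2}}{3}<5$. The only cosmetic difference is that you obtain the moments via optional stopping of polynomial martingales whereas the paper solves the boundary-value difference equations directly, but these are equivalent standard derivations.
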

\begin{proof}

First we wish to calculate the variance and expectation of $X_{i}$. The calculation of both is by the method used in 
\cite{feller}, chapter XIV, section 3. Define random variables:
\[
\tau_{n} \coloneqq \min\left\{ t \ge 0 : n+X_{t}^{\omega} \in \left\{\pm L \right\} \right\}
\]
By our definition of $\next$, $X_{i}$ is distributed with the same probability law as $\tau_{0}$. Note that:
\begin{equation}
\label{eq:boundary value of stopping time}
\tau_{L} = \tau_{-L} = 0
\end{equation}
By the law of total expectation:
\begin{align*}
\mathbb{E}\left[\tau_{n}\right] & = 
\frac{1}{2} \mathbb{E}\left[\tau_{n-1}+1\right] + 
\frac{1}{2} \mathbb{E}\left[\tau_{n+1}+1\right]
\\ & =
\frac{1}{2} \mathbb{E}\left[\tau_{n-1}\right] + 
\frac{1}{2} \mathbb{E}\left[\tau_{n+1}\right] + 1,
\end{align*}
and by \eqref{eq:boundary value of stopping time} the boundary conditions for this difference equation is 
$\mathbb{E}\left[\tau_{L}\right] = \mathbb{E}\left[\tau_{-L}\right] = 0$. 
The solution for this equation is uniquely $\mathbb{E}\left[\tau_{n}\right] = L^2-n^2$ for $-L\le n \le L$.

Similarly, by the law of total expectation, for $-L<n<L$:
\begin{align*}
\mathbb{E}\left[\tau_{n}^2\right] & = 
\frac{1}{2} \mathbb{E}\left[\left(\tau_{n-1}+1\right)^2\right] + 
\frac{1}{2} \mathbb{E}\left[\left(\tau_{n+1}+1\right)^2\right]
\\ & =
\frac{1}{2} \mathbb{E}\left[\tau_{n-1}^2+2\tau_{n-1}+1\right] + 
\frac{1}{2} \mathbb{E}\left[\tau_{n+1}^2+2\tau_{n+1}+1\right]
\\ & = 
\frac{1}{2} \mathbb{E}\left[\tau_{n-1}^2\right] + 
\frac{1}{2} \mathbb{E}\left[\tau_{n+1}^2\right] + 
2L^{2} - \left(n-1\right)^2 - \left(n+1\right) + 1,
\end{align*}
and again by \eqref{eq:boundary value of stopping time},
$\mathbb{E}\left[\tau_{L}^2\right] = \mathbb{E}\left[\tau_{-L}^2\right] = 0$. The solution now is:
\[
\mathbb{E}\left[\tau_{n}^2\right] = \frac{1}{3}\left(L^2-n^2\right)\left(5L^2-n^2-2\right).
\]
Therefore:
\[
\Var\left(\tau_{n}\right) = 
\mathbb{E}\left[\tau_{n}^2\right] - \mathbb{E}\left[\tau_{n}\right]^2 = 
\frac{2}{3}\left(L^2-n^2\right)\left(L^2+n^2-1\right),
\]
and in particular, $\mathbb{E}\left[X_{i}\right] = L^2$ and $\Var\left(X_{i}\right) = \frac{2}{3}L^2\left(L^2-1\right) < \frac{2}{3} L^4$.

Define a new random variable $Y = \left(1-\sqrt{\delta}\right)\sum_{i=1}^{\delta N}X_{i} - \sqrt{\delta} \sum_{i=\delta N+1}^{N}X_{i}$. Note that:
\begin{equation}
\label{eq:chebyshev_term1}
\Pr_{\omega}\left(\sum_{i=1}^{\delta N}X_{i}> \sqrt{\delta} \sum_{i=1}^{N}X_{i}\right) =
\Pr_{\omega}\left(Y>0\right) <
\Pr_{\omega}\left(\left|Y-\mathbb{E}\left[Y\right]\right|>\mathbb{E}\left[Y\right]\right).
\end{equation}
Also note that:
\begin{equation}
\label{eq:chebyshev_term_exp}
\left(\mathbb{E}\left[Y\right]\right)^2 = 
\left(- \left(1-\sqrt{\delta}\right)\delta N L^{2} +\sqrt{\delta}\left(1-\delta\right) N L^2 \right)^2 =
\left( \sqrt{\delta}-\delta  \right)^2  N^{2} L^{4},
\end{equation}
and since the $X_{i}$ are independent random variables:
\begin{multline}
\label{eq:chebyshev_term_var}
\Var\left(Y\right) = 
\left(1-\sqrt{\delta}\right)^2\sum_{i=1}^{\delta N}\Var\left(X_{i}\left)+\delta \sum_{i=\delta N+1}^{N}\Var\right(X_{i}\right) \\ =
\left(\left(1-\sqrt{\delta}\right)^2 \delta + \delta \left(1-\delta\right)\right) N \Var\left(X_{1}\right) <
\frac{4}{3} \delta\left(1 - \sqrt{\delta} \right) N L^4.
\end{multline}
By applying Chebyshev's inequality, we get from \eqref{eq:chebyshev_term1}, \eqref{eq:chebyshev_term_exp} and \eqref{eq:chebyshev_term_var}:
\begin{equation*}
\eqref{eq:chebyshev_term1}
 < \frac{\Var\left(Y\right)}{\left(\mathbb{E}\left[Y\right]\right)^2}
 < \frac{4}{3\left(1-\sqrt{\delta}\right)N}.
\end{equation*}
Since we assumed $0<\delta<\frac{1}{2}$, this yield the inequality in the lemma.

\end{proof}

Now we can define a new condition, that controls the probability bad regions of the walks being visited for too much time.
\begin{crit}
\label{crit:bound_on_probs}
Condition \ref{crit:bound_on_probs} holds for $m\in\left\{1,\dots,k-1\right\}$ and $0<\alpha<1$ if:
\begin{enumerate}
\item $\alpha>2\exp\left(-\frac{1}{64R_{m}^{\mathrm{upper}}}N_{m-1} {\alpha}^{4} \right)$
\item $\alpha>5\exp\left(-\frac{1}{256}N_{m}\left( \alpha^{2} \frac{R^{\mathrm{lower}}_{m}}{R^{\mathrm{upper}}_{m}} \right)^{2}\right)$
\item $\alpha>\exp\left(-\frac{1}{4M_{m}^{\mathrm{upper}}}N_{0} {\alpha}^{2}\right)$
\item $\alpha>\frac{5}{N_{m-1}}$.
\end{enumerate}
\end{crit}

Later we will use Lemma \ref{lem:condition_on_alpha} to see that a sufficient condition for Condition \ref{crit:bound_on_probs} is that 
\begin{multline*}
\alpha >
\max{\Bigg(}
	4 \left(\frac{N_{m-1}}{R^{\mathrm{upper}}_{m}}\right)^{-\frac{1}{4}},
	2\sqrt{5} \left(N_{m}\left(\frac{R^{\mathrm{lower}}_{m}}{R^{\mathrm{upper}}_{m}}\right)^{2}\right)^{-\frac{1}{8}},\\
	\frac{1}{\sqrt{2}} \left(\frac{N_{0}}{M^{\mathrm{upper}}_{m}}\right)^{-\frac{1}{4}},
	5\left(N_{m-1}\right)^{-1}
{\Bigg)}.
\end{multline*}

\begin{lem}
\label{lem:bad_redlength_local_m_bounds}
Let $m\in\left\{1,\dots,k\right\}$ and $0<\alpha<1$. 
Suppose Condition \ref{crit:ratios_criterion} holds, and that $\frac{\alpha^{2}}{8}$ and $m$ satisfy Condition \ref{crit:alpha_bound_for_redLength_bound}, 
and that $\alpha$ and $m$ satisfy Condition \ref{crit:alpha_bound_for_length_bound}. 
If $m<k$, also assume that $\alpha$ and $m$ satisfy Condition \ref{crit:bound_on_probs}.
Then:
\begin{equation}
\label{eq:bound_on_badb_volume_part1}
	Pr_{\omega}\left(\left|
		\badb_{m,0}\left(\omega\right)
		\right|>
	4\alpha\len\left(\omega\right)\text{ and } \omega\in\regwalksall\right)
	< 4\alpha
\end{equation}

\end{lem}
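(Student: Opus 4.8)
The plan is to decompose $\badb_{m,0}(\omega)$ using $\badb_{m}=\badb_{m}^{\mathrm{length}}\cup\badb_{m}^{\mathrm{redLength}}\cup\badb_{m}^{\mathrm{local}}$ (Definition \ref{def:badb_all}); by Definition \ref{def:badb_asterix_at_levels} this yields $|\badb_{m,0}(\omega)|\le|\badb_{m,0}^{\mathrm{length}}(\omega)|+|\badb_{m,0}^{\mathrm{redLength}}(\omega)\cup\badb_{m,0}^{\mathrm{local}}(\omega)|$, so it suffices to control the probability that each summand exceeds $2\alpha\len(\omega)$ while $\omega\in\regwalksall$. First I would dispose of $\alpha\ge\frac14$, where $4\alpha\len(\omega)\ge\len(\omega)\ge|\badb_{m,0}(\omega)|$ makes the event in \eqref{eq:bound_on_badb_volume_part1} empty; so assume $\alpha<\frac14$, hence $4\alpha^{2}<\frac12$. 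The length summand is then immediate from Lemma \ref{lem:bad_lengths_m_bounds}(2), whose hypotheses are Conditions \ref{crit:ratios_criterion} and \ref{crit:alpha_bound_for_length_bound}; it bounds $\Pr_{\omega}(|\badb_{m,0}^{\mathrm{length}}(\omega)|>2\alpha\len(\omega),\ \omega\in\regwalksall)$ by $\exp(-N_{0}\alpha^{2}/(4M_{m}^{\mathrm{upper}}))+\exp(-N_{m}\alpha^{2}/4)$.

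For the redLength/local summand the key is to convert a bound on the number of bad level-$(m-1)$ steps into a bound on the number of bad ground-level times. Write $B=\badb_{m,m-1}^{\mathrm{redLength}}(\omega)\cup\badb_{m,m-1}^{\mathrm{local}}(\omega)\subseteq\{0,\dots,\len_{m-1}(\omega)-1\}$, which is determined by the level-$(m-1)$ reduced walk $r=\red(\omega,L_{1}\cdots L_{m-1})$. The telescoping identity $I_{m,j}^{0}(\omega)=\bigcup_{i\in I_{m,j}^{m-1}(\omega)}I_{m-1,i}^{0}(\omega)$ (a disjoint union) gives, setting $X_{i}=|I_{m-1,i}^{0}(\omega)|$ and $n=\len_{m-1}(\omega)$, that $|\badb_{m,0}^{\mathrm{redLength}}(\omega)\cup\badb_{m,0}^{\mathrm{local}}(\omega)|=\sum_{i\in B}X_{i}$ while $\len(\omega)=\sum_{i=0}^{n-1}X_{i}$. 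Conditionally on $r$ the walk $\omega$ is a concatenation of $n$ independent bridges, each going from $0$ to $\pm L_{1}\cdots L_{m-1}$ at its first hitting time and conditioned on which endpoint it reaches; by reflection symmetry the length of such a bridge is independent of the exit side, so conditionally on $r$ the $X_{i}$ are i.i.d.\ with the law of $\next(\omega,L_{1}\cdots L_{m-1},0)$, while $B$, $n$ and the event $G=\{|B|\le 4\alpha^{2}n\}\cap\{n\ge N_{m-1}\}$ are $\sigma(r)$-measurable.

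On $G$ — which contains $G\cap\{\omega\in\regwalksall\}$, since $\omega\in\regwalksall$ forces $n\ge N_{m-1}$ — we have $|B|/n\le 4\alpha^{2}<\frac12$, so by exchangeability of the $X_{i}$ and Lemma \ref{lem:sum_of_nexts} applied conditionally with $N=n$, $\delta=|B|/n$, one gets $\Pr_{\omega}(\sum_{i\in B}X_{i}>\sqrt{|B|/n}\,\len(\omega)\mid r)<5/n\le 5/N_{m-1}$; since $\sqrt{|B|/n}\le 2\alpha$ on $G$, integrating over $r$ would bound by $5/N_{m-1}$ the probability that $|\badb_{m,0}^{\mathrm{redLength}}(\omega)\cup\badb_{m,0}^{\mathrm{local}}(\omega)|>2\alpha\len(\omega)$ while $\omega\in\regwalksall$ and $G$ holds. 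Off $G$ I would invoke Lemma \ref{lem:bad_redlength_local_m_m-1_bounds} \emph{with parameter $\alpha^{2}$} — its hypothesis that $\frac{\alpha^{2}}{8}$ and $m$ satisfy Condition \ref{crit:alpha_bound_for_redLength_bound} being exactly what is assumed here — to bound $\Pr_{\omega}(|B|>4\alpha^{2}\len_{m-1}(\omega),\ \omega\in\regwalksall)$ by $2\exp(-N_{m-1}\alpha^{4}/(64R_{m}^{\mathrm{upper}}))+4\exp(-\tfrac{N_{m}}{256}(\alpha^{2}R_{m}^{\mathrm{lower}}/R_{m}^{\mathrm{upper}})^{2})$.

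Summing the three estimates bounds $\Pr_{\omega}(|\badb_{m,0}(\omega)|>4\alpha\len(\omega),\ \omega\in\regwalksall)$ by
\[
\exp\Big(-\tfrac{N_{0}\alpha^{2}}{4M_{m}^{\mathrm{upper}}}\Big)+\exp\Big(-\tfrac{N_{m}\alpha^{2}}{4}\Big)+\tfrac{5}{N_{m-1}}+2\exp\Big(-\tfrac{N_{m-1}\alpha^{4}}{64R_{m}^{\mathrm{upper}}}\Big)+4\exp\Big(-\tfrac{N_{m}}{256}\Big(\tfrac{\alpha^{2}R_{m}^{\mathrm{lower}}}{R_{m}^{\mathrm{upper}}}\Big)^{2}\Big),
\]
and I would finish using Condition \ref{crit:bound_on_probs}: parts (3), (1), (4) make the first, fourth and third terms each $<\alpha$; since $\alpha R_{m}^{\mathrm{lower}}/R_{m}^{\mathrm{upper}}<8$ the term $\exp(-N_{m}\alpha^{2}/4)$ is dominated by the last exponential, so part (2) makes the sum of the second and fifth terms $<\alpha$ — giving the desired $<4\alpha$. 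This argument uses Condition \ref{crit:bound_on_probs}, hence covers $m<k$ only; when $m=k$ one has $\len_{k}(\omega)=1$, so $\badb_{k,0}(\omega)$ is all-or-nothing and the event reduces to $\{0\in\badb_{k}(\omega)\}\cap\{\omega\in\regwalksall\}$, which I would bound term by term from Corollary \ref{cor:bound_prob_badb_lower}, Lemma \ref{lem:probablistic lower bound on next} and Lemma \ref{lem:0inbadlocal_prob}, absorbing each piece into a small multiple of $\alpha$ via Conditions \ref{crit:alpha_bound_for_length_bound} and \ref{crit:alpha_bound_for_redLength_bound}. The main obstacle will be the conditional-i.i.d.\ step of the second paragraph: one must see that, given the level-$(m-1)$ reduced walk, the ground-level interval lengths are i.i.d.\ copies of the bare exit time $\next(\omega,L_{1}\cdots L_{m-1},0)$, which is precisely what lets the ``few bad reduced steps'' bound (Lemma \ref{lem:bad_redlength_local_m_m-1_bounds}) and the fluctuation bound (Lemma \ref{lem:sum_of_nexts}) be combined; everything else is bookkeeping of which power of $\alpha$ feeds which earlier estimate.
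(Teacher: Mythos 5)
Your proof is correct and follows essentially the same strategy as the paper: split $\badb_{m,0}$ into the length part (bounded via Lemma \ref{lem:bad_lengths_m_bounds}) and the redLength/local part (converting level-$(m-1)$ bad indices to ground-level times via Lemma \ref{lem:bad_redlength_local_m_m-1_bounds} and Lemma \ref{lem:sum_of_nexts}), then close with Condition \ref{crit:bound_on_probs}, handling $m=k$ separately by observing $\len_k(\omega)=1$ and bounding $\Pr(0\in\badb_k)$ term by term. Your explicit conditioning on the level-$(m-1)$ reduced walk $r$ to render $B$ deterministic and the $X_i$ genuinely i.i.d. (using reflection symmetry to decouple segment length from exit side) is a helpful sharpening of a step the paper passes over informally.
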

\begin{rem}
When this inequality is true for some value of $\alpha$, it is also true for any larger value of $\alpha$. 
On the other hand, Conditions \ref{crit:alpha_bound_for_redLength_bound}, \ref{crit:alpha_bound_for_length_bound} and \ref{crit:bound_on_probs}  essentially serve as lower bounds on $\alpha$, 
so we may think of inequality \eqref{eq:bound_on_badb_volume_part1} as pertaining to the minimal $\alpha$ that satisfies these lower bounds.
\end{rem}
\begin{rem}
Note that $4\alpha$ plays two superficially unrelated roles in \eqref{eq:bound_on_badb_volume_part1}.
This is because we want to use, for any $m$, the same ratio (up to some constant scaling) of $\len\left(\omega\right)$ in inequality \eqref{eq:bound_on_badb_volume_part1}, and get the same probability bounds for all $m$ (again, up to some constant scaling). 
For $m=k$ in particular, this means that $\Pr_{\omega}\left(0\in\badb_{m,0}\left(\omega\right)\right)$ is bounded by $4\alpha$ for $m=k$ (up to some scaling). 
Thus, we want it to be so also for $m<k$. 
But this probability is close to the ratio of $\len\left(\omega\right)$ in inequality \eqref{eq:bound_on_badb_volume_part1}. 
Therefore, the two $4\alpha$-s are related on each other, and they can be taken to be the same without much loss in the quality of our probability bounds.
\end{rem}
\begin{proof}[Proof of Lemma \ref{lem:bad_redlength_local_m_bounds}]
First we handle the case that $m=k$. Note that 
\begin{align}
\label{eq:level_k_badb_prob}
\Pr_{\omega}\left(\left|
	\badb_{k,0}\left(\omega\right)
	\right|>
\alpha\len\left(\omega\right)\text{ and } \omega\in\regwalksall\right) 
& = \Pr_{\omega}\left(0 \in \badb_{k}\left(\omega\right)\right) 
\end{align}
and by using a union bound for $\badb_{k,0}\left(\omega\right)$ as the union of its subsets 
$\badb_{k,0}^{\mathrm{local}}\left(\omega\right)$, 
$\badb_{k,0}^{\mathrm{upper}}\left(\omega\right)$, 
$\badb_{k,0}^{\mathrm{lower}}\left(\omega\right)$, 
$\badb_{k,0}^{\mathrm{redUpper}}\left(\omega\right)$ and  
$\badb_{k,0}^{\mathrm{redLower}}\left(\omega\right)$, 
and then using the bounds from 
Corollary \ref{cor:bound_prob_badb_lower},
Lemma \ref{lem:probablistic lower bound on next}
and Lemma \ref{cor:bound_prob_badb_local},
we get by the assumption of Condition \ref{crit:ratios_criterion} that:
\begin{align*}
\eqref{eq:level_k_badb_prob} <
&	4 L_{m} \exp\left(-\frac{ \beta_{m} R^{\mathrm{lower}}_{m} }{L_{m}}\right) +\\
&+	M_{k}^{\mathrm{lower}}\exp\left(-\frac{\left(L_{1} \cdots L_{k}\right)^{2}}{M_{k}^{\mathrm{lower}}}\right) +
	R_{k}^{\mathrm{lower}}\exp\left(-\frac{L_{k}^{2}}{R_{k}^{\mathrm{lower}}}\right) +\\
&+	\exp\left(-\frac{1}{27}\frac{M_{k}^{\mathrm{lower}}}{\left(L_{1} \cdots L_{k}\right)^{2}}\right) +
	\exp\left(-\frac{1}{27}\frac{R_{k}^{\mathrm{lower}}}{L_{k}^{2}}\right).
\end{align*}
Thus we get from  Conditions \ref{crit:alpha_bound_for_redLength_bound} and \ref{crit:alpha_bound_for_length_bound}, 
and from the fact that $k\ge2$, that:
\begin{align}
\label{eq:level_k_badb_prob2}
\eqref{eq:level_k_badb_prob} <
	\frac{1}{2}\alpha^{2} + 	\frac{1}{2}\alpha + 	\frac{1}{16}\alpha^{2} + 	\left(\frac{1}{4}\alpha\right)^{16} + 	\left(\frac{1}{16}\alpha^{2}\right)^{16} 	< 4\alpha.
\end{align}

Now we may turn to the case that $m<k$.
Since we assume that 
$\frac{\alpha^{2}}{8}$ and $m$ satisfy Condition \ref{crit:alpha_bound_for_redLength_bound}, 
then by Lemma \ref{lem:bad_redlength_local_m_m-1_bounds}:
\begin{multline}
\label{eq:bound_prob_error_density1}
Pr_{\omega}\left(\left|\badb^{\mathrm{local}}_{m,m-1}\left(\omega\right) \cup \badb_{m,m-1}^{\mathrm{redLength}}\left(\omega\right)\right|>
4\alpha^{2}\len_{m-1}\left(\omega\right)\text{ and } \omega\in\regwalksall\right)
\\ < 
2\exp\left(-\frac{1}{64R_{m}^{\mathrm{upper}}}N_{m-1} {\alpha}^{4} \right) +
4\exp\left(-\frac{1}{256}N_{m}\left( \alpha^{2} \frac{R^{\mathrm{lower}}_{m}}{R^{\mathrm{upper}}_{m}} \right)^{2}\right).
\end{multline}
If $m-1=0$ then by Condition \ref{crit:bound_on_probs} we are done. Assume $m>1$. Note that:
\[
\left|\badb^{\mathrm{local}}_{m,0}\left(\omega\right) \cup \badb_{m,0}^{\mathrm{redLength}}\left(\omega\right)\right| = 
\sum_{j\in\badb^{\mathrm{local}}_{m,m-1}\left(\omega\right) \cup \badb_{m,m-1}^{\mathrm{redLength}}\left(\omega\right)} \left|I_{m-1,j}\left(\omega\right)\right|
\]
\[
\len\left(\omega\right) = 
\sum_{j=0}^{\len_{m-1}\left(\omega\right)-1} \left|I_{m-1,j}\left(\omega\right)\right|
\]
and also note that $\left\{\left|I_{m-1,j}\left(\omega\right)\right|\right\}_{j=0}^{\len_{m-1}\left(\omega\right)-1}$ are i.i.d random variables with 
the same probability law as $\next\left(\omega,L_{1}\cdots L_{m-1},0\right)$, where $L_{1}\cdots L_{m-1} \ge L_{1}\ge 1000$.
In Lemma \ref{lem:sum_of_delta_gt_epsilon}, the order of summation of the $X_{i}$-s doesn't matter, so we may apply it with $N=\len_{m-1}\left(\omega\right)$, $\delta=4\alpha^2$ and 
$\left\{\left|I_{m-1,j}\left(\omega\right)\right|: 0\le j \le \len_{m-1}\left(\omega\right)-1\right\} = \left\{X_{i}: 0\le i \le N-1\right\}$, 
 and get
\begin{multline}
\label{eq:bound_prob_error_density2}
Pr_{\omega}\left(\left|\badb^{\mathrm{local}}_{m,0}\left(\omega\right) \cup \badb_{m,0}^{\mathrm{redLength}}\left(\omega\right)\right|>
2\alpha\len\left(\omega\right)\text{ and } \omega\in\regwalksall\right)
\\ < 
Pr_{\omega}\left(\left|\badb^{\mathrm{local}}_{m,m-1}\left(\omega\right) \cup \badb_{m,m-1}^{\mathrm{redLength}}\left(\omega\right)\right|>
4\alpha^{2}\len_{m-1}\left(\omega\right)\text{ and } \omega\in\regwalksall\right) +\\
+\frac{5}{N_{m-1}},
\end{multline}
which by using \eqref{eq:bound_prob_error_density1} gives:
\begin{multline}
\label{eq:bound_prob_error_density3}
\eqref{eq:bound_prob_error_density2} < 
2\exp\left(-\frac{1}{64R_{m}^{\mathrm{upper}}}N_{m-1} {\alpha}^{4} \right) +\\+
4\exp\left(-\frac{1}{256}N_{m}\left( \alpha^{2} \frac{R^{\mathrm{lower}}_{m}}{R^{\mathrm{upper}}_{m}} \right)^{2}\right)+\frac{5}{N_{m-1}}.
\end{multline}

Our assumptions also imply that $M_{m}^{\mathrm{upper}}>1280\left(L_{1}\cdots L_{m}\right)^{2}$, $L_{1}\cdots L_{m}\ge 500$, 
$\alpha>4\exp\left(-\frac{1}{432}\frac{M_{m}^{\mathrm{upper}}}{\left(L_{1}\cdots L_{m}\right)^{2}}\right)$ 
and $\alpha>2 M_{m}^{\mathrm{lower}}\exp\left(-\frac{\left(L_{1}\cdots L_{m}\right)^{2}}{M_{m}^{\mathrm{lower}}}\right)$.
Therefore, by Lemma \ref{lem:bad_lengths_m_bounds}:
\begin{multline*}
\Pr_{\omega}\left(\left|\badb_{m,0}^{\mathrm{length}}\left(\omega\right)\right|>
2\alpha\len\left(\omega\right)
\text{ and } \omega\in\regwalksall\right) \\<
\exp\left(-\frac{1}{4M_{m}^{\mathrm{upper}}}N_{0} {\alpha}^{2}\right) + \exp\left(-\frac{1}{4}N_{m}\alpha^2\right)\\<
\exp\left(-\frac{1}{4M_{m}^{\mathrm{upper}}}N_{0} {\alpha}^{2}\right) + 
\exp\left(-\frac{1}{256}N_{m}\left( \alpha^2 \frac{R^{\mathrm{lower}}_{m}}{R^{\mathrm{upper}}_{m}} \right)^{2}\right)
\end{multline*}
which together with \eqref{eq:bound_prob_error_density3} implies that 
\begin{multline*}
Pr_{\omega}\Big(\left|\badb^{\mathrm{local}}_{m,0}\left(\omega\right) \cup \badb_{m,0}^{\mathrm{redLength}}\left(\omega\right)
 \cup \badb_{m,0}^{\mathrm{length}}\left(\omega\right)\right|>
4\alpha\len\left(\omega\right)\\\hspace{255pt}\text{ and } \omega\in\regwalksall\Big) \\ 
\begin{aligned}\le&
Pr_{\omega}\left(\left|\badb^{\mathrm{local}}_{m,0}\left(\omega\right) \cup \badb_{m,0}^{\mathrm{redLength}}\left(\omega\right)\right|>
2\alpha\len\left(\omega\right)\text{ and } \omega\in\regwalksall\right) +\\&+
\Pr_{\omega}\left(\left|\badb_{m,0}^{\mathrm{length}}\left(\omega\right)\right|>
2\alpha\len\left(\omega\right)\text{ and } \omega\in\regwalksall\right)
\\ < &
2\exp\left(-\frac{1}{64R_{m}^{\mathrm{upper}}}N_{m-1} {\alpha}^{4} \right) +
5\exp\left(-\frac{1}{256}N_{m}\left( \alpha^{2} \frac{R^{\mathrm{lower}}_{m}}{R^{\mathrm{upper}}_{m}} \right)^{2}\right) + \\
&+\exp\left(-\frac{1}{4M_{m}^{\mathrm{upper}}}N_{0} {\alpha}^{2}\right)
+\frac{5}{N_{m-1}}.
\end{aligned}
\end{multline*}
By the assumption of Condition \ref{crit:bound_on_probs}, the sum on the right hand side is bounded by $4\alpha$, which concludes the proof of this lemma.
\end{proof}

\subsection{Bounding the probability that $\omega\in\regwalksall$}

\begin{lem}
\label{lem:lower bound on regwalksall prob}

It holds that:
\[
\Pr_{\omega}\left(\omega\notin\regwalksall\right) 
  <   \sum_{m=1}^{k} N_{m-1}\exp\left(-\frac{\left(L_{m}\cdots L_{k}\right)^{2}}{N_{m-1}}\right).
\]
\end{lem}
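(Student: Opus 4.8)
\section*{Proof proposal for Lemma \ref{lem:lower bound on regwalksall prob}}

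The plan is to start from $\regwalksall=\bigcap_{m=0}^{k}\regwalks_{\ge}^{m}$ and apply a union bound. By Definition \ref{defn:r_all}, an element $\omega\in\walks$ fails to lie in $\regwalksall$ exactly when $\len_{m}\left(\omega\right)<N_{m}$ for some $m\in\left\{0,\dots,k\right\}$, so
\[
\Pr_{\omega}\left(\omega\notin\regwalksall\right)\le\sum_{m=0}^{k}\Pr_{\omega}\left(\len_{m}\left(\omega\right)<N_{m}\right).
\]
The term $m=k$ vanishes: every $\omega\in\walks$ satisfies $\next\left(\omega,L_{1}\cdots L_{k},0\right)=\len\left(\omega\right)$, so its $\left(L_{1}\cdots L_{k}\right)$-reduced embedding consists of the single point $0$, i.e. $\len_{k}\left(\omega\right)=1$; since $N_{k}=1$ (as fixed in Definition \ref{defn:sinwalks}) this forces $\Pr_{\omega}\left(\len_{k}\left(\omega\right)<N_{k}\right)=0$, which is why the sum in the statement only involves $N_{0},\dots,N_{k-1}$.

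The substantive step is to identify, for each fixed $m'\in\left\{0,\dots,k-1\right\}$, the distribution of $\len_{m'}\left(\omega\right)$, writing $L'=L_{1}\cdots L_{m'}$. The key point is that the $L'$-reduced embedded walk of a simple random walk is itself a simple random walk: $i_{\omega,L'}\left(1\right)=\next\left(\omega,L',0\right)$ is the first time $X^{\omega}$ is $\pm L'$ away from the origin, which by symmetry of $\omega$ equals $+L'$ with probability $\tfrac12$, and then by the strong Markov property at the stopping time $i_{\omega,L'}\left(1\right)$ the walk restarts afresh from a new multiple of $L'$; inductively $\red\left(\omega,L'\right)\left(0\right),\red\left(\omega,L'\right)\left(1\right),\dots$ are i.i.d.\ uniform on $\left\{\pm1\right\}$. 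Since $\omega\in\walks$ is such a simple random walk truncated at the first time $n$ with $\left|X^{\omega}_{n}\right|=L_{1}\cdots L_{k}$, and since $L'$ divides $L_{1}\cdots L_{k}$, that time $n$ is necessarily one of the embedding times $i_{\omega,L'}\left(\cdot\right)$, with all earlier embedding values strictly inside $\left(-L_{1}\cdots L_{k},\,L_{1}\cdots L_{k}\right)$. Hence $\red\left(\omega,L'\right)$ is a simple random walk run until it first hits $\pm\left(L_{1}\cdots L_{k}\right)/L'=\pm L_{m'+1}\cdots L_{k}$, so $\len_{m'}\left(\omega\right)$ has the same law as $\next\left(\omega'',L_{m'+1}\cdots L_{k},0\right)$ for $\omega''$ a simple random walk.

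Given this, I would invoke Lemma \ref{lem:probablistic lower bound on next} with $L=L_{m'+1}\cdots L_{k}$ and $N=N_{m'}$ to get
\[
\Pr_{\omega}\left(\len_{m'}\left(\omega\right)<N_{m'}\right)<N_{m'}\exp\left(-\frac{\left(L_{m'+1}\cdots L_{k}\right)^{2}}{N_{m'}}\right),
\]
and then sum over $m'\in\left\{0,\dots,k-1\right\}$, reindexing by $m=m'+1$, to obtain the claimed bound. The main obstacle is the middle step: rigorously justifying the distributional identity $\len_{m'}\left(\omega\right)\overset{d}{=}\next\left(\omega'',L_{m'+1}\cdots L_{k},0\right)$. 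The two things that need care are (a) that the reduced increments are genuinely independent and symmetric, which is the strong Markov property applied successively at the embedding times, and (b) that the truncation time $n$ of an $\omega\in\walks$ coincides with an embedding time at every scale $L_{1}\cdots L_{m'}$ (so that truncating $\omega$ corresponds exactly to truncating $\red\left(\omega,L_{1}\cdots L_{m'}\right)$ at its hitting time of $\pm L_{m'+1}\cdots L_{k}$) --- this is precisely where divisibility of $L_{1}\cdots L_{m'}$ into $L_{1}\cdots L_{k}$ is used. Everything else is a union bound plus a direct citation of an already-proved lemma.
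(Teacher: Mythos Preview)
Your proof is correct and follows essentially the same approach as the paper: a union bound over the events $\omega\notin\regwalks_{\ge}^{m}$ for $m\in\{0,\dots,k-1\}$, followed by an application of Lemma~\ref{lem:probablistic lower bound on next} to each term, and a reindexing. You are in fact more careful than the paper on two points: you explicitly justify the distributional identity $\len_{m'}(\omega)\overset{d}{=}\next(\omega'',L_{m'+1}\cdots L_{k},0)$ via the strong Markov property (the paper invokes Lemma~\ref{lem:probablistic lower bound on next} without comment), and you explicitly dispose of the $m=k$ term, which the paper silently drops by writing $\regwalksall=\bigcap_{m=0}^{k-1}\regwalks_{\ge}^{m}$. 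One minor quibble: Definition~\ref{defn:sinwalks} does not literally set $N_{k}=1$ (the formula in item~(7) with the empty product gives $N_{k}=1/\ln(k^{2}/\alpha)<1$), but since $\len_{k}(\omega)=1$ deterministically this term contributes zero regardless, so your argument is unaffected.
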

\begin{proof}
By Lemma \ref{lem:probablistic lower bound on next}, for any $m\in\left\{0,\dots,k-1\right\}$ it holds that 
													   
\[
\Pr_{\omega}\left(\omega\notin\regwalks_{\ge}^{m}\right) < 
N_{m}\exp\left(-\frac{\left(L_{m+1}\cdots L_{k}\right)^{2}}{N_{m}}\right),
\]
and therefore:
\begin{align*}
\Pr_{\omega}\left(\omega\notin\regwalksall\right) 
& =   \Pr_{\omega}\left(\omega\notin\bigcap_{m=0}^{k-1}\regwalks_{\ge}^{m}\right) \\
& \le \sum_{m=0}^{k-1}\Pr_{\omega}\left(\omega\notin\regwalks_{\ge}^{m}\right) \\
& <   \sum_{m=0}^{k-1} N_{m}\exp\left(-\frac{\left(L_{m+1}\cdots L_{k}\right)^{2}}{N_{m}}\right).
\end{align*}
\end{proof}

\begin{cor}
\label{cor:union_of_bads_upper_bound}
Suppose Condition \ref{crit:ratios_criterion} holds, and that for any $m\in\left\{1,\dots,k\right\}$, 
$\frac{\alpha^{2}}{8m^{4}}$ and $m$ satisfy Condition \ref{crit:alpha_bound_for_redLength_bound}, 
and $\frac{\alpha}{m^2}$ and $m$ satisfy Condition \ref{crit:alpha_bound_for_length_bound}, and that:
\begin{equation}
\label{eq:condition_on_N_m}
m^{-2}\alpha>N_{m-1}\exp\left(-\frac{\left(L_{m}\cdots L_{k}\right)^{2}}{N_{m-1}}\right).
\end{equation}
Also assume that for any $m\in\left\{1,\dots,k-1\right\}$, $\frac{\alpha}{m^2}$ and $m$ satisfy Condition \ref{crit:bound_on_probs}.

Then it holds that:
\[
Pr_{\omega}\left(\left|\cup_{m=1}^{k}
	\badb_{m,0}\left(\omega\right)
	\right|\ge
10\alpha\len\left(\omega\right) \right) 
< 10\alpha.
\]
\end{cor}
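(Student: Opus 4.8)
The plan is to apply Lemma \ref{lem:bad_redlength_local_m_bounds} once at each level $m\in\left\{1,\dots,k\right\}$, but with the parameter $\alpha$ in that lemma replaced by $\alpha/m^{2}$, and then to assemble the resulting estimates by a union bound. The reason for the weight $m^{-2}$ is that $\sum_{m\ge1}m^{-2}=\pi^{2}/6<2$, so that all the summed quantities stay a fixed multiple of $\alpha$.

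First I would check that the substitution $\alpha\mapsto\alpha/m^{2}$ is admissible, i.e.\ that the hypotheses of Lemma \ref{lem:bad_redlength_local_m_bounds} hold for this value. We have $0<\alpha/m^{2}<1$ since $0<\alpha<1$ and $m\ge1$; Condition \ref{crit:ratios_criterion} is among the standing assumptions; the requirement that $\tfrac{(\alpha/m^{2})^{2}}{8}=\tfrac{\alpha^{2}}{8m^{4}}$ and $m$ satisfy Condition \ref{crit:alpha_bound_for_redLength_bound} is exactly one of the hypotheses of the corollary, as is the requirement that $\alpha/m^{2}$ and $m$ satisfy Condition \ref{crit:alpha_bound_for_length_bound}, and, for $m<k$, that $\alpha/m^{2}$ and $m$ satisfy Condition \ref{crit:bound_on_probs}. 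Hence Lemma \ref{lem:bad_redlength_local_m_bounds}, applied with $\alpha/m^{2}$ in place of $\alpha$, gives for every $m\in\left\{1,\dots,k\right\}$
\[
\Pr_{\omega}\left(\left|\badb_{m,0}\left(\omega\right)\right|>\tfrac{4\alpha}{m^{2}}\len\left(\omega\right)\text{ and }\omega\in\regwalksall\right)<\tfrac{4\alpha}{m^{2}}.
\]

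Next I would run a union bound over $m$. The probability that some $m$ violates this bound while $\omega\in\regwalksall$ is at most $\sum_{m=1}^{k}\tfrac{4\alpha}{m^{2}}<\tfrac{2\pi^{2}}{3}\alpha<7\alpha$. On the complementary event, if moreover $\omega\in\regwalksall$, then $\left|\badb_{m,0}\left(\omega\right)\right|\le\tfrac{4\alpha}{m^{2}}\len\left(\omega\right)$ for all $m$, so by subadditivity of cardinality $\bigl|\bigcup_{m=1}^{k}\badb_{m,0}\left(\omega\right)\bigr|\le\sum_{m=1}^{k}\tfrac{4\alpha}{m^{2}}\len\left(\omega\right)<7\alpha\len\left(\omega\right)<10\alpha\len\left(\omega\right)$. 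Therefore the event $\bigl|\bigcup_{m=1}^{k}\badb_{m,0}\left(\omega\right)\bigr|\ge10\alpha\len\left(\omega\right)$ is contained in the union of the ``some $m$ fails'' event and the event $\left\{\omega\notin\regwalksall\right\}$. For the latter I would invoke Lemma \ref{lem:lower bound on regwalksall prob} together with the hypothesis \eqref{eq:condition_on_N_m}, which bounds each term $N_{m-1}\exp\bigl(-(L_{m}\cdots L_{k})^{2}/N_{m-1}\bigr)$ by $m^{-2}\alpha$, to obtain $\Pr_{\omega}\left(\omega\notin\regwalksall\right)<\sum_{m=1}^{k}m^{-2}\alpha<\tfrac{\pi^{2}}{6}\alpha<2\alpha$. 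Adding the two pieces gives $\Pr_{\omega}\bigl(\bigl|\bigcup_{m=1}^{k}\badb_{m,0}\left(\omega\right)\bigr|\ge10\alpha\len\left(\omega\right)\bigr)<7\alpha+2\alpha=9\alpha<10\alpha$, which is the claim.

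The only part that needs care is the bookkeeping of constants: the weight $m^{-2}$ has to be chosen so that both the summed failure probabilities and the summed cardinality bounds stay a fixed multiple of $\alpha$, and one must confirm that the prescribed constant $10$ leaves enough slack (it does, since $9\alpha$ already suffices). There is no genuine probabilistic difficulty here — all the content lives in Lemmas \ref{lem:bad_redlength_local_m_bounds} and \ref{lem:lower bound on regwalksall prob}, and this corollary is just their aggregation across the levels $m=1,\dots,k$.
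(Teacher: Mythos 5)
Your proof is correct and takes essentially the same route as the paper: apply Lemma \ref{lem:bad_redlength_local_m_bounds} with $\alpha/m^{2}$ at each level, union-bound over $m$ using $\sum m^{-2}<2$, and separately bound $\Pr(\omega\notin\regwalksall)$ via Lemma \ref{lem:lower bound on regwalksall prob} and \eqref{eq:condition_on_N_m}. The only cosmetic difference is bookkeeping of constants (you land at $9\alpha$ rather than the paper's $8\alpha+2\alpha$ accounting), which changes nothing.
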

\begin{proof}

First note that due to the fact that $\sum_{m=1}^{k}m^{-2}<\sum_{m=1}^{\infty}m^{-2}=\frac{\pi^2}{6}<2$:

\begin{multline}
\label{eq:bound on union of bads}
Pr_{\omega}\left(\left|\cup_{m=1}^{k}
	\badb_{m,0}\left(\omega\right)
	\right|>
8\alpha\len\left(\omega\right)\text{ and } \omega\in\regwalksall\right) \\
\begin{aligned}
<&
Pr_{\omega}\left(\sum_{m=1}^{k}\left|
	\badb_{m,0}\left(\omega\right)
	\right|>
4{\left(\sum_{m=1}^{k}m^{-2}\right)}\alpha\len\left(\omega\right)\text{ and } \omega\in\regwalksall\right)
\\<&\sum_{m=1}^{k}
Pr_{\omega}\left(\left|
	\badb_{m,0}\left(\omega\right)
	\right|>
4{m^{-2}}\alpha\len\left(\omega\right)\text{ and } \omega\in\regwalksall\right) .
\end{aligned}
\end{multline}

Using Lemma \ref{lem:bad_redlength_local_m_bounds} we get:
\begin{equation}
\label{eq:bound_on_union_badb_and_regwalksall_prob}
\eqref{eq:bound on union of bads} < \sum_{m=1}^{k} \frac{4\alpha}{m^2} <8\alpha,
\end{equation}
and then, by Lemma \ref{lem:lower bound on regwalksall prob} and  \eqref{eq:condition_on_N_m}, 
\begin{equation}
\label{eq:bound_on_union_badb_and_regwalksall_prob2}
\Pr_{\omega}\left(\omega\notin\regwalksall\right) 
 <   \sum_{m=1}^{k} N_{m-1}\exp\left(-\frac{\left(L_{m}\cdots L_{k}\right)^{2}}{N_{m-1}}\right)
 <   \sum_{m=1}^{k}  m^{-2}\alpha
 < 	 2\alpha.
\end{equation}
By combining \eqref{eq:bound_on_union_badb_and_regwalksall_prob} with \eqref{eq:bound_on_union_badb_and_regwalksall_prob2} we get 
\begin{equation*}
Pr_{\omega}\left(\left|\cup_{m=1}^{k}
	\badb_{m,0}\left(\omega\right)
	\right|>
8\alpha\len\left(\omega\right) \right) 
< 10\alpha,
\end{equation*}
yielding the required conclusion.

\end{proof}

\subsection{Explicit choice of the parameters}

Using Corollary \ref{cor:union_of_bads_upper_bound}, we will soon set explicit values to the parameters that we left undefined hitherto. Before that, a final easy technical lemma is required:
\begin{lem}
\label{lem:condition_on_alpha}
If $a,b,m,x>0$ and $x>a^{\frac{1}{2}}b^{-\frac{1}{2m}}$, then $x>a\exp\left(-b x^{m}\right)$.
\end{lem}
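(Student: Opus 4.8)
The plan is to first replace the hypothesis by the single clean inequality $a<b^{1/m}x^{2}$, and then to prove a stronger bound in which $a$ no longer appears. Since $x,b>0$ and $2m>0$, raising $x>a^{1/2}b^{-1/(2m)}$ to the power $2m$ gives $x^{2m}>a^{m}b^{-1}$, hence $bx^{2m}>a^{m}$, and taking $m$-th roots (everything in sight being positive) yields $b^{1/m}x^{2}>a$. Therefore $a\exp(-bx^{m})<b^{1/m}x^{2}\exp(-bx^{m})$, and it suffices to prove the $a$-free inequality $b^{1/m}x^{2}\exp(-bx^{m})\le x$, which, after dividing by $x>0$ and multiplying by $\exp(bx^{m})>0$, is the same as
\[
b^{1/m}x\le\exp\left(bx^{m}\right).
\]

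To prove this, I would substitute $w\coloneqq b^{1/m}x>0$ and note that $w^{m}=bx^{m}$, so the inequality becomes $w\le\exp(w^{m})$, i.e. $\ln w\le w^{m}$. If $w\le1$ this is clear, since then $\ln w\le0<w^{m}$. If $w>1$, write $\ln w=\tfrac{1}{m}\ln\left(w^{m}\right)$ and apply the elementary estimate $\ln t\le t-1$ (valid for every $t>0$) with $t=w^{m}$ to obtain $\ln w\le\tfrac{1}{m}\left(w^{m}-1\right)<\tfrac{1}{m}w^{m}\le w^{m}$, where the final inequality uses $m\ge1$. In either case $\ln w\le w^{m}$, hence $w\le\exp(w^{m})$, as required. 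Chaining the inequalities gives $a\exp(-bx^{m})<b^{1/m}x^{2}\exp(-bx^{m})\le x$, which is the claim.

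There is no real obstacle in this argument: once the hypothesis is rewritten as $a<b^{1/m}x^{2}$, the lemma collapses to the one-variable fact $\ln w\le w^{m}$, which is routine. The only points requiring a little care are choosing the homogenizing substitution $w=b^{1/m}x$ so that the exponent $bx^{m}$ turns into exactly $w^{m}$, and observing that it is $m\ge1$ that makes $\tfrac{1}{m}w^{m}\le w^{m}$; the latter holds in every application of the lemma in this paper (where the relevant exponent is $m\in\{2,4\}$).
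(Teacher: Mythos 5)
Your proof is correct and complete for $m\ge1$, which covers every application of the lemma in the paper (the relevant exponents are $m\in\{2,4\}$). Since the paper's own proof consists of the single sentence ``This can be seen through calculation'', there is nothing to compare against; your argument supplies that calculation cleanly via the homogenizing substitution $w=b^{1/m}x$ and the reduction to $\ln w\le w^{m}$.

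You have also implicitly caught a small error in the statement. As written, with only the hypothesis $m>0$, the lemma is false. Your reduction shows the lemma is equivalent to $\ln w\le w^{m}$ for all $w>0$; minimizing $g(w)=w^{m}-\ln w$ (the minimum is at $w=m^{-1/m}$ with value $\frac{1+\ln m}{m}$) shows this holds exactly when $m\ge e^{-1}$, and fails otherwise. A concrete counterexample to the lemma as stated: take $m=\tfrac14$, $b=1$, $x=256$, $a=60000$. Then $a^{1/2}b^{-1/(2m)}=\sqrt{60000}\approx244.9<256=x$, so the hypothesis holds, but $bx^{m}=256^{1/4}=4$ and $a\exp(-bx^{m})=60000\,e^{-4}\approx1099>256=x$, so the conclusion fails. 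Your remark that $m\ge1$ is what is really used in the last step, and that this restriction is satisfied wherever the lemma is invoked, is exactly the right thing to note; strictly speaking the hypothesis of the lemma should include $m\ge1$ (or $m\ge e^{-1}$).
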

\begin{proof} This can be seen through calculation. \end{proof}

At long last, we define the parameters:

\begin{defn}
\label{defn:sinwalks}
Let $A,B,k\in\mathbb{N}$ and $0<\alpha<1$ be some fixed parameters. For the following set of parameters for any $m\in\left\{1,\dots,k\right\}$:
\begin{enumerate}
\item	$L_{m} = A m^{B}$
\item	$\frac{M^{\mathrm{upper}}_{m}}{\left(L_{1} \cdots L_{m}\right)^{2}} = 2000\ln\left(\frac{m^2}{\alpha}\right)$
\item	$\frac{\left(L_{1} \cdots L_{m}\right)^{2}}{M^{\mathrm{lower}}_{m}} = 2\ln\left(\left(L_{1} \cdots L_{m}\right)^{2}\right) + 4\ln\left(\frac{m^2}{\alpha}\right)$
\item	$\frac{R^{\mathrm{upper}}_{m}}{L_{m}^{2}} = 4000\ln\left(\frac{m^2}{\alpha}\right)$
\item	$\frac{L_{m}^{2}}{R^{\mathrm{lower}}_{m}} = 2\ln\left(L_{m}^{2}\right) + 16\ln\left(\frac{m^2}{\alpha}\right)$
\item	$\beta_{m}
= 10\cdot\frac{2L_{m}}{R^{\mathrm{lower}}_{m}}\ln\left(
	\frac{200m^{4} R^{\mathrm{upper}}_{m} L_{m}}{ \alpha^{2} R^{\mathrm{lower}}_{m} }
\right)$
\item	$\frac{\left(L_{m+1} \cdots L_{k}\right)^{2}}{N_{m}} = 10\ln\left(L_{m+1} \cdots L_{k}\right) + \ln\left(\frac{m^2}{\alpha}\right)$ where $1\le m \le k$, \\and 
$\frac{\left(L_{1} \cdots L_{k}\right)^{2}}{N_{0}} = 10\ln\left(L_{1} \cdots L_{k}\right)$
\item 	$\alpha = \left(10A\right)^{-4}$.\end{enumerate}
we define the set of \textbf{sinuous walks}:
\begin{equation}
\label{eq:defn_sinwalks}
\sinwalks\left(A,B,k\right) = \left\{ 
	\omega\in\walks:
	\left|\bigcup_{m=1}^{k}\badb_{m,0}\left(\omega\right)\right|<10\alpha\len\left(\omega\right)
		\right\}.
\end{equation}
\end{defn}

The above values $R^{\mathrm{lower}}_{m},R^{\mathrm{upper}}_{m},M^{\mathrm{lower}}_{m},M^{\mathrm{upper}}_{m}$ are rounded to the closest integer (if $A\ge1000$, 
the rounded value will be between half and twice the original, and that is sufficient for us).

For the upper bounds on the probabilities concerning $M^{\mathrm{lower}}_{m},R^{\mathrm{lower}}_{m}$ and $N_{m-1}$ to be meaningful, 
we should require that the value that we give these parameters is at least $1$. that is, 
$\frac{\left(L_{1} \cdots L_{m}\right)^{2}}{ 2\ln\left(\left(L_{1} \cdots L_{m}\right)^{2}\right) + 4\ln\left(\frac{m^2}{\alpha}\right) } > 1$, 
$\frac{L_{m}^{2}}{2\ln\left(L_{m}^{2}\right) + 16\ln\left(\frac{m^2}{\alpha}\right)} > 1$, and
$\frac{\left(L_{m} \cdots L_{k}\right)^{2}}{5\ln\left(\left(L_{m} \cdots L_{k}\right)^{2}\right) + \ln\left(\frac{\left(m-1\right)^2}{\alpha}\right)} > 1$
. 
Since for $x\ge1000$ (such as $A,L_{1},\dots L_{k}$) it holds that $x-2\ln\left(x\right)>\frac{x}{2}$ and $x-5\ln\left(x\right)>\frac{x}{2}$, it is sufficient to have:
\[
\ln\left(\frac{\left(m-1\right)^2}{\alpha}\right) < \ln\left(\frac{m^2}{\alpha}\right) < \frac{L_{m}^{2}}{16} 
< \min\left(\frac{\left(L_{1} \cdots L_{m}\right)^{2}}{8} , \frac{\left(L_{m} \cdots L_{k}\right)^{2}}{2}\right). 
\]

By this reduction, it is enough to have $\frac{m^2}{\alpha} < \exp\left(\frac{L_{m}^{2}}{16}\right)$. 
Since this is required for each $m$, and the sequence $L_{m}$ increases at least as fast as $m$, it is sufficient 
to require that it holds just for $m=1$, where $L_{m}=A$. So our condition on $\alpha$ is that $\alpha > \exp\left(-\frac{A^{2}}{16}\right)$, 
which holds for $\alpha = \left(10A\right)^{-4}$. 
\begin{prop}
\label{prop:prob_of_sinuosity}
Let $A,B,k\in\mathbb{N}$ be such that $A\ge1000,B\ge250,k\ge20A$. Then it holds that:
\[
\Pr\left(\omega\notin\sinwalks\left(A,B,k\right)\right) < 
\frac{1}{1000A^{4}}
\]
and $\alpha<\frac{1}{10000M^{\mathrm{upper}}_{1}}$.\end{prop}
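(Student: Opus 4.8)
The plan is to reduce the statement to Corollary~\ref{cor:union_of_bads_upper_bound}. By Definition~\ref{defn:sinwalks}, the event $\omega\notin\sinwalks(A,B,k)$ is exactly $\bigl|\bigcup_{m=1}^{k}\badb_{m,0}(\omega)\bigr|\ge 10\alpha\len(\omega)$. Hence, once I verify that the explicit parameters of Definition~\ref{defn:sinwalks} satisfy every hypothesis of Corollary~\ref{cor:union_of_bads_upper_bound}, that corollary gives $\Pr(\omega\notin\sinwalks(A,B,k))<10\alpha=10(10A)^{-4}=\frac{1}{1000A^{4}}$, which is the first assertion. So the body of the argument is the verification of Condition~\ref{crit:ratios_criterion}, of Condition~\ref{crit:alpha_bound_for_redLength_bound} for the value $\tfrac{\alpha^{2}}{8m^{4}}$, of Condition~\ref{crit:alpha_bound_for_length_bound} for the value $\tfrac{\alpha}{m^{2}}$, of Condition~\ref{crit:bound_on_probs} for the value $\tfrac{\alpha}{m^{2}}$ (for $m<k$), and of inequality~\eqref{eq:condition_on_N_m}, for the parameters of Definition~\ref{defn:sinwalks}, keeping in mind that each of these parameters is rounded to an integer and so lies within a factor $2$ of its nominal value (this is where $A\ge 1000$ enters), and that the positivity of $M^{\mathrm{lower}}_{m},R^{\mathrm{lower}}_{m},N_{m-1}$ has already been arranged in the discussion preceding the proposition.

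Condition~\ref{crit:ratios_criterion} is immediate: $L_{m}=Am^{B}\ge A\ge 1000$, and since $m^{2}/\alpha\ge 1/\alpha=(10A)^{4}$ the quantity $\ln(m^{2}/\alpha)$ is large, so $R^{\mathrm{upper}}_{m}/L_{m}^{2}=4000\ln(m^{2}/\alpha)$ and $M^{\mathrm{upper}}_{m}/(L_{1}\cdots L_{m})^{2}=2000\ln(m^{2}/\alpha)$ greatly exceed $1280$. For the remaining conditions the key point is that the parameters were chosen precisely so that every ratio occurring inside an $\exp(-\cdot)$ in Conditions~\ref{crit:alpha_bound_for_redLength_bound}, \ref{crit:alpha_bound_for_length_bound} and in~\eqref{eq:condition_on_N_m} has the shape $(\text{positive constant})\ln(m^{2}/\alpha)$, plus, in the ``$\mathrm{lower}$'' cases and the $N_{m}$ case, a term $\ln(\text{the relevant scale})$ that is compensated by the same scale appearing as a prefactor; moreover $\beta_{m}$ is rigged so that $\tfrac{\beta_{m}R^{\mathrm{lower}}_{m}}{L_{m}}=20\ln\bigl(\tfrac{200m^{4}R^{\mathrm{upper}}_{m}L_{m}}{\alpha^{2}R^{\mathrm{lower}}_{m}}\bigr)$, which plays the same role for the third clause of Condition~\ref{crit:alpha_bound_for_redLength_bound}. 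Exponentiating the negative of such a ratio therefore yields a fixed positive power of $\alpha/m^{2}<1$ times a factor $\le 1$ absorbing the prefactor. Substituting these into Conditions~\ref{crit:alpha_bound_for_redLength_bound} and~\ref{crit:alpha_bound_for_length_bound}, each clause collapses to an inequality of the type $(\alpha/m^{2})^{a}>(\text{const})(\alpha/m^{2})^{b}$ with $b>a$, valid with an enormous margin because $\alpha/m^{2}\le\alpha=(10A)^{-4}$ is tiny; and~\eqref{eq:condition_on_N_m} reduces to $(L_{m}\cdots L_{k})^{8}>m^{2}/(m-1)^{2}$ for $m\ge 2$ and to $(L_{1}\cdots L_{k})^{8}>1/\alpha=(10A)^{4}$ for $m=1$, both trivial from $L_{j}\ge 1000$ and $A^{8}\ge(10A)^{4}$.

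The one genuinely delicate step is Condition~\ref{crit:bound_on_probs} for $\tfrac{\alpha}{m^{2}}$ with $m<k$, as it couples parameters of consecutive levels. I would use the sufficient condition recorded immediately after Condition~\ref{crit:bound_on_probs} (obtained from Lemma~\ref{lem:condition_on_alpha}): it suffices that $\tfrac{\alpha}{m^{2}}$ exceed a maximum of four explicit expressions built from $N_{m-1}/R^{\mathrm{upper}}_{m}$, $N_{m}(R^{\mathrm{lower}}_{m}/R^{\mathrm{upper}}_{m})^{2}$, $N_{0}/M^{\mathrm{upper}}_{m}$, and $N_{m-1}$. Each of these carries a ``gap'' factor $(L_{m+1}\cdots L_{k})^{2}\ge L_{k}^{2}$ (nonempty precisely because $m<k$), divided only by polylogarithmic quantities and by $L_{m}^{2}$ or $(L_{1}\cdots L_{m})^{2}$; so after raising to the relevant negative fractional power, one is comparing powers of $\alpha^{-1}=(10A)^{4}$ and of $m\le k$ against a large power of $L_{k}=Ak^{B}$, and the hypotheses $B\ge 250$ and $k\ge 20A$ (hence $A\le k/20$) are exactly what makes the powers of $k$ dominate. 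This completes the verification, so Corollary~\ref{cor:union_of_bads_upper_bound} applies and yields the probability bound. Finally, since $L_{1}=A$ and $\alpha=(10A)^{-4}$, Definition~\ref{defn:sinwalks} gives $M^{\mathrm{upper}}_{1}=A^{2}\cdot 2000\ln((10A)^{4})=8000A^{2}\ln(10A)$, so $\alpha<\tfrac{1}{10000M^{\mathrm{upper}}_{1}}$ is equivalent to $8000\ln(10A)<A^{2}$, which holds for all $A\ge 1000$ (even after allowing the extra factor $2$ from rounding). I expect the bookkeeping in Condition~\ref{crit:bound_on_probs} to be the main obstacle; apart from that, the proof is a mechanical check of polynomial inequalities in the single small quantity $\alpha/m^{2}$.
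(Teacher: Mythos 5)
Your proposal follows the same strategy as the paper's proof: observe that $\omega\notin\sinwalks(A,B,k)$ is by Definition~\ref{defn:sinwalks} exactly the event that Corollary~\ref{cor:union_of_bads_upper_bound} bounds, verify the various parameter conditions (\ref{crit:ratios_criterion}, \ref{crit:alpha_bound_for_length_bound}, \ref{crit:alpha_bound_for_redLength_bound}, \eqref{eq:condition_on_N_m}, and \ref{crit:bound_on_probs}) for the explicit parameters, and then read off $10\alpha = 1/(1000A^4)$, together with the direct check $\alpha < 1/(10000 M^{\mathrm{upper}}_1)$. Your sketch correctly identifies Condition~\ref{crit:bound_on_probs} as the place where $B\ge250$ and $k\ge20A$ actually enter, which is also where the paper expends its only non-routine computation, so the two arguments match.
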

\begin{proof}
We explained why the choice of parameters in $\sinwalks\left(A,B,k\right)$ is meaningful just before this Proposition.
It remains to simply verify that each of the conditions of Corollary \ref{cor:union_of_bads_upper_bound} hold for the choice of parameters in Definition \ref{defn:sinwalks}.
\begin{enumerate}
\item 
First note that indeed $\alpha=\left(10A\right)^{-4}<\frac{1}{20M^{\mathrm{upper}}_{1}}$ by (8) in Definition \ref{defn:sinwalks}.
Since $A\ge1000$:
\[
M^{\mathrm{upper}}_{1}\alpha <
4000L_{1}^{2}\ln\left(\frac{1}{\alpha}\right)\left(10A\right)^{-4} =
0.4 \cdot 4\ln\left(10A\right) A^{-2} <
10^{-4}.
\]
\item 
Condition \ref{crit:ratios_criterion} holds for any $1\le m \le k$ and $0<\frac{\alpha}{m^2}<\frac{1}{10^{4} M^{\mathrm{upper}}_{1} m^2}$, since $A\ge1000$ by 
items (2) and (4) in Definition \ref{defn:sinwalks}.
\item 
Condition \ref{crit:alpha_bound_for_length_bound} holds for any $1\le m \le k$ and $0<\frac{\alpha}{m^2}<\frac{1}{10^{4} M^{\mathrm{upper}}_{1}  m^2}$, 
by items (2) and (4) in Definition \ref{defn:sinwalks}.
\item
Condition \ref{crit:alpha_bound_for_redLength_bound} holds for any $1\le m \le k$ and $0<\frac{\alpha^2}{8m^4}<\frac{1}{8\cdot10^{8} \left(M^{\mathrm{upper}}_{1}\right)^2  m^4}$, 
by items (4), (5) and (6) in Definition \ref{defn:sinwalks}.
\item 
Inequality \eqref{eq:condition_on_N_m} holds for any $1\le m \le k$ and $0<\frac{\alpha}{m^2}<\frac{1}{10^{4} M^{\mathrm{upper}}_{1}  m^2}$, 
by items (7) in Definition \ref{defn:sinwalks}.
\item
To show that Condition \ref{crit:bound_on_probs} holds for any $1\le m \le k-1$ and $0<\frac{\alpha}{m^2}<\frac{1}{10^{4} M^{\mathrm{upper}}_{1}  m^2}$, 
note that by Lemma \ref{lem:condition_on_alpha} 
it is sufficient to see that for any $1\le m\le k-1$ the number $\frac{\alpha}{m^2}$ is larger than
$4 \left(\frac{N_{m-1}}{R^{\mathrm{upper}}_{m}}\right)^{-\frac{1}{4}}$, 
$2\sqrt{5} \left(N_{m}\left(\frac{R^{\mathrm{lower}}_{m}}{R^{\mathrm{upper}}_{m}}\right)^{2}\right)^{-\frac{1}{8}}$, 
$\frac{1}{\sqrt{2}} \left(\frac{N_{0}}{M^{\mathrm{upper}}_{m}}\right)^{-\frac{1}{4}}$ and 
$5\left(N_{m-1}\right)^{-1}$. The case that $m=k-1$ implies all of the cases $m<k-1$, and in this case the four expressions are bounded from above 
by $\frac{1000 B k}{ {L_{k}}^{\frac{1}{8}}}$,
which is bounded by $\frac{\alpha}{k^2}$ since Definition \ref{defn:sinwalks} implies:
\begin{align*}
k^{2} \cdot 1000 B k L_{k}^{-\frac{1}{8}} & = 
1000 B k^{3} \left(A k^{B}\right)^{-\frac{1}{8}} =
1000 A^{-\frac{1}{8}} B k^{3-\frac{B}{8}} \\ & <
1000 A^{-\frac{1}{8}} \cdot 80k^{\frac{B}{80}} \cdot k^{3-\frac{B}{8}} = 
80000 A^{-\frac{1}{8}} \left(20A\right)^{3-\frac{9B}{80}} \\ & = 
80000 \cdot 2^{\frac{240-9B}{80}}  \left(10A\right)^{\frac{230-9B}{80}} <
2^{\frac{1840-9B}{80}}  \left(10A\right)^{\frac{230-9B}{80}} \\ & <
\left(10A\right)^{-4} = 
\alpha.
\end{align*}
\end{enumerate}

\end{proof}

\section{The Set of Tests \texorpdfstring{$\Lambda_{k}$}{Lambda k}}
\label{section_Lambda}

\subsection{Definition of $\Lambda_{k}$ and Outline of the Proof of Theorem \ref{thm:main_theorem}}
\label{subsection_def_of_tests}

In Definition \ref{def:test} we defined what a test is, 
and in Theorem \ref{thm:main_theorem} we claimed the existence of a set of tests denoted by $\Lambda$, that has some desirable properties. We now construct this set of tests recursively, based on a choice of a yet another sequence of parameters $B_m$ (for $m\geq 2$) which will be called the \textbf{branching number at level $m$}.

\begin{defn}
\label{def:Lambda}
We define $\Lambda_{m}$, the \textbf{set of level-$m$ tests},  for any $m\in\mathbb{N}$ by recursion.
A \textbf{level-$1$ test} is a sequence of length $1$ of the form $\left(\left(t,\Delta\right)\right)=\left(\left(0,0\right)\right)$.
The set of level-$1$ tests is $\Lambda_{1}$.
For any $m\in\mathbb{N}$, we define recursively:
 \[
 \Lambda_{m+1}\coloneqq\left\{ \left((t+t_{j},\Delta+2\Delta_{j}):
	\begin{array}{c} 
		1\leq j\leq B_{m+1}\\[3pt]
		(t,\Delta)\in T_{j}
	\end{array}
	\right)
	\middle|
	\begin{array}{c}
	    \forall 1\leq j \leq B_{m+1},\\[3pt]
		1 \leq t_j \leq M_{m+1}^{\mathrm{upper}} \text{ distinct}, \\[3pt]
		|\Delta_{j}| \leq M_{m+1}^{\mathrm{upper}},\\[3pt]
		T_j \in \Lambda_{m}
	\end{array} 
\right\} 
 \]
and a \textbf{level-$(m+1)$ test} is 
simply an element of $\Lambda_{m+1}$.

\end{defn}
\medskip

\noindent
The set of level-$m$ tests $\Lambda _ m$ depends implicitly on the parameters $A, B, k$ of Definition~\ref{defn:sinwalks}. When we want to make this dependence explicit, we write it as $\Lambda _ m (A, B, k)$.

\medskip

For Theorem \ref{thm:main_theorem}, we will use the set of tests $\Lambda=\Lambda_{k}\left(A,B,k\right)$. 
We hope that at least one of the tests in $\Lambda_k$, a test we called $\lambda_{\rm passed}$ in Theorem \ref{thm:main_theorem}, will satisfy 
the consequences of this theorem. 
In particular, we want to have $|\lambda_{\rm passed}(\omega')|>N^\theta$. 
We achieve this by requiring that for any $\left(t_{1},\Delta_{1}\right),\left(t_{2},\Delta_{2}\right)\in\lambda_{\mathrm{passed}}$, if they are different, 
then the following intervals are disjoint:
\begin{equation}
\label{eq:disjoint intervals}
\left\{X^{\omega}_{t} : t\in I'_{t_{1}}\right\} \cap
\left\{X^{\omega}_{t} : t\in I'_{t_{2}}\right\} = \emptyset
\end{equation}
where $I'_{t} = \left\{i_{0}+t,\dots,\next\left(\omega,L_{1},i_{0}+t\right)\right\}$,
and additionally that the number of such intervals 
 will be large enough.

We will use the following strategy to ensure the disjointness in \eqref{eq:disjoint intervals}:
Pick $B_k$ intervals $I_{k,j}^{0}\left(\omega\right)$ such that the scenery intervals $\left\{X^{\omega}_{t}:t\in I_{k,j}^{0}\left(\omega\right)\right\}$ 
are pairwise disjoint. Inside each of these intervals, pick $B_{k-1}$ intervals $I_{k-1,j}^{0}\left(\omega\right)$ such that the scenery intervals 
$\left\{X^{\omega}_{t}:t\in I_{k-1,j}^{0}\left(\omega\right)\right\}$ are pairwise disjoint, and note that we now have $B_{k} B_{k-1}$ intervals that have 
pairwise disjoint scenery intervals. 
Repeat this process until you have $B_{k} \cdots B_{2}$ intervals $I_{1,j}^{0}\left(\omega\right)$ with pairwise disjoint scenery intervals, which is what we needed 
for \eqref{eq:disjoint intervals}.
A similar strategy was used in \cite{Kalikow}. Also relevant are the discrete Cantor sets from \cite{Austin}.

To have any chance of achieving the pairwise disjointness of the $B_{m}$ intervals from the $m$-th stage of the strategy above, 
the branching number at level $m$ needs to be less than $L_m$.
This is because for each interval $I_{m,j}^{0}\left(\omega\right)$, the scenery interval must contain an interval of the form 
\[\left\{L_{1}\cdots L_{m} \cdot x,
\dots ,L_{1}\cdots L_{m} \cdot \left(x+1\right)\right\},\] 
for some $x$, so this scenery interval can pack at most $\frac{1}{2}L_{m}$ pairwise disjoint scenery intervals of the form
\[\left\{L_{1}\cdots L_{m-1} \cdot \left(x'-1\right), \dots ,L_{1}\cdots L_{m-1} \cdot \left(x'+1\right)\right\},\] 
and the scenery intervals $I_{m-1,j'}^{0}\left(\omega\right)$ could be this large, in the extreme case. 

Now, given that idea, we would like to maximize the $B_{m}$-s, while ensuring that no matter which places in the record $\left(\left(\omega_{i},\sigma\left(X^{\omega}_{i}\right)\right)\right)_{i=0}^{l-1}$ the adversary chooses to corrupt, we will still have 
some $B_{m}$ relatively uncorrupted intervals. Suppose we are the adversary, and suppose we only change the viewed scenery. A good strategy for us would be to 
change the viewed scenery at the least visited places in the scenery interval that the record covers. This signifies that the possibility of reconstruction is 
controlled by the behaviour of the local time measure of $\omega$ (as discussed in subsection \ref{subsection:local_time}). 
In Lemma \ref{inner lemma : J ratio} below, we will translate our assumption on the local time measure in Definition \ref{def:badblocal} to what is essentially a lower bound on the maximal $B_{m}$ that we can use, and use the choice of $B_{m}$ that we get from it:
\begin{equation}
\label{eq:premise B beta small}
B_{m} \coloneqq \left\lfloor \frac{M^{\mathrm{lower}}_{m}}{\beta_{m} M^{\mathrm{upper}}_{m-1} R^{\mathrm{upper}}_{m} \left(m^2+1\right)} \right\rfloor
.\end{equation}

Later, we will choose the parameters $A,B$ in Definition \ref{defn:sinwalks} (which we have not done yet) so that $B_{m}$ will be roughly $L_{m}^{2\theta}$, for $2\theta$ slightly less than $1$, as in Theorem~\ref{thm:main_theorem}.

\medskip

\subsection{Properties of $\Lambda_{k}$}
The following proposition gives an upper bound on the number of tests in $\Lambda_m$ in terms of the parameters $A$ and $B_2$,\dots,$B_m$. 
This will be used to establish consequence (3) of Theorem \ref{thm:main_theorem}, 
i.e that $\ln\left|\Lambda\right| < \epsilon \left|\lambda_{\mathrm{passed}}\left(\omega'\right)\right|$
.

\begin{prop}
\label{prop:Bound_on_size_of_lambda}
Assume that $B\ge16$, $A\ge2^{200}$ and $B\le32A^{\frac{1}{32}}$. Then for any 
$m\in
\left\{2,3,\dots\right\}$
it holds that  $\ln\left|\Lambda_{m}\left(A,B,k\right)\right|<
10^{6}A^{\frac{1}{32}} B_{2}\cdots B_{m}$.
\end{prop}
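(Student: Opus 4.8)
The plan is to induct on $m$, using the recursive structure of $\Lambda_{m+1}$ in Definition \ref{def:Lambda} to obtain a product-type recursion for $\left|\Lambda_m\right|$, and then to bound that product crudely. First I would record the trivial base case: $\left|\Lambda_1\right|=1$ (the only level-$1$ test is $\left((0,0)\right)$), so $\ln\left|\Lambda_1\right|=0$. For the inductive step, I would count the choices that go into forming an element of $\Lambda_{m+1}$: for each $j\in\{1,\dots,B_{m+1}\}$ we pick an integer $t_j$ in $\{1,\dots,M_{m+1}^{\mathrm{upper}}\}$ (at most $M_{m+1}^{\mathrm{upper}}$ options, or a slightly smaller binomial count because they are distinct, but the crude upper bound suffices), an integer $\Delta_j$ with $\left|\Delta_j\right|\le M_{m+1}^{\mathrm{upper}}$ (at most $2M_{m+1}^{\mathrm{upper}}+1\le 3M_{m+1}^{\mathrm{upper}}$ options), and a test $T_j\in\Lambda_m$ (at most $\left|\Lambda_m\right|$ options). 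Hence
\[
\left|\Lambda_{m+1}\right|\le\left(3\left(M_{m+1}^{\mathrm{upper}}\right)^2\left|\Lambda_m\right|\right)^{B_{m+1}},
\]
and taking logarithms,
\[
\ln\left|\Lambda_{m+1}\right|\le B_{m+1}\left(\ln 3+2\ln M_{m+1}^{\mathrm{upper}}+\ln\left|\Lambda_m\right|\right).
\]

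Next I would iterate this recursion from $m=1$ upward. Writing $a_m=\ln\left|\Lambda_m\right|$ and $c_m=\ln 3+2\ln M_m^{\mathrm{upper}}$, the recursion $a_{m+1}\le B_{m+1}(c_{m+1}+a_m)$ unrolls to
\[
a_m\le\sum_{i=2}^{m}c_i\,B_i B_{i+1}\cdots B_m\le\Big(\sum_{i=2}^{m}c_i\Big)B_2\cdots B_m,
\]
using $B_j\ge1$ for all $j$ (which I would need to check is forced by the assumptions $A\ge2^{200}$, $B\ge16$ via the explicit formula \eqref{eq:premise B beta small} and Definition \ref{defn:sinwalks}; this is where the lower bounds on $A$ and $B$ are really used). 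So it remains to bound $\sum_{i=2}^{m}c_i=\sum_{i=2}^m\left(\ln 3+2\ln M_i^{\mathrm{upper}}\right)$ by $10^6 A^{1/32}$. Here I would plug in the explicit value $M_i^{\mathrm{upper}}=2000\left(L_1\cdots L_i\right)^2\ln(i^2/\alpha)$ from item (2) of Definition \ref{defn:sinwalks}, with $L_j=Aj^B$ and $\alpha=(10A)^{-4}$, giving $\ln M_i^{\mathrm{upper}}=O\big(i\ln A+B\ln i!+\ln\ln(i/\alpha)\big)=O\big(i(\ln A+B\ln i)\big)$. Summing over $i\le m$ produces a bound of order $m^2(\ln A+B\ln m)$, so I would want $m^2(\ln A+B\ln m)\le 10^6A^{1/32}B_2\cdots B_m$ — and since each $B_i$ is of order $L_i^{2\theta}\gtrsim A^{2\theta}i^{2\theta B}$ (from \eqref{eq:premise B beta small} together with the explicit parameter values), the product $B_2\cdots B_m$ grows far faster than any polynomial in $m$ times a polynomial in $\ln A$, so this holds comfortably for $A\ge2^{200}$, $B\ge16$, $B\le32A^{1/32}$. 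The factor $A^{1/32}$ and the constraint $B\le 32A^{1/32}$ are exactly tuned so that the "constant" prefactor $\sum c_i$, which carries a factor like $B$ from the $\ln i!$ terms, is absorbed into $A^{1/32}$.

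The main obstacle is bookkeeping rather than conceptual: one must show that the accumulated prefactor $\sum_{i=2}^m c_i$ — which depends on $A$, $B$, and $m$ through the explicit formulas for $M_i^{\mathrm{upper}}$ — is dominated by $10^6 A^{1/32}$ times a single copy of the partial product $B_2\cdots B_m$, uniformly in $m$. The delicate point is that $\sum c_i$ grows with $m$ (like $m^2 B\ln m$), so I cannot simply bound it by a constant; instead I must exploit that $B_2\cdots B_m$ itself grows super-polynomially in $m$ (each $B_i\ge 2$ once $i$ is moderately large, and in fact $B_i$ grows like a power of $L_i$), so that even the partial product already beats the prefactor, leaving the $A^{1/32}$ headroom to absorb the dependence on $A$ and $B$. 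I would carry this out by first establishing a clean lower bound $B_i\ge c\, A^{2\theta} i^{2\theta B}$ (or even just $B_i\ge 2$ for $i\ge i_0$ and a crude lower bound below that) from \eqref{eq:premise B beta small} and Definition \ref{defn:sinwalks}, then verifying the inequality $\big(\sum_{i=2}^m c_i\big)\le 10^6A^{1/32}B_2\cdots B_m$ by comparing growth rates term by term, using $B\le32A^{1/32}$ to control the $B$-dependence of the left side.
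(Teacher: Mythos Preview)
Your recursion and its unrolling to
\[
a_m\le\sum_{i=2}^{m}c_i\,B_i B_{i+1}\cdots B_m
\]
are fine, but the next step is where the argument breaks. When you bound $B_i\cdots B_m\le B_2\cdots B_m$ and pull out the full product, you are left needing $\sum_{i=2}^m c_i\le 10^6 A^{1/32}$, and as you yourself compute, $\sum c_i$ grows like $m^2(\ln A+B\ln m)$, so this is simply false for large $m$. You then try to rescue it by requiring instead $\sum c_i\le 10^6A^{1/32}B_2\cdots B_m$, but that product has already been factored out; using it again would only yield $a_m\le 10^6A^{1/32}(B_2\cdots B_m)^2$, not the claimed bound. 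So the ``partial product beats the prefactor'' idea cannot be applied at this stage of the argument.

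The paper's proof uses exactly the same recursion and unrolling, but factors the sum the other way:
\[
\sum_{i=2}^{m}c_i\,B_i\cdots B_m
=(B_2\cdots B_m)\sum_{i=2}^{m}\frac{c_i}{B_2\cdots B_{i-1}},
\]
and shows that \emph{this} sum is bounded uniformly in $m$. The point is that each successive term picks up an extra $B_{i-1}$ in the denominator, and since $B_i$ grows (the paper establishes $B_i>L_i/((\ln A+B)i)^{13}$), the terms decay geometrically; concretely the paper gets $\frac{c_{n+1}}{B_2\cdots B_n}<100\cdot 2^{-4(n-1)}$ for $n\ge 2$, so the whole sum is essentially $c_2$ plus a convergent tail, and that is what fits under $10^6A^{1/32}$. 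Your crude bound throws away precisely the denominators $B_2\cdots B_{i-1}$ that make the series summable.
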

\begin{proof}
First note that $\left|\Lambda_{1}\right|=1$, so the claim holds for $m=1$. 
Now, using the recursive definition:
\begin{align*}
\left|\Lambda_{m+1}\right| & \leq \left(M_{m+1}^{\mathrm{upper}}\right)^{B_{m+1}}\left(2M_{m+1}^{\mathrm{upper}}+1\right)^{B_{m+1}}\left|\Lambda_{m}\right|^{B_{m+1}}\\
 & <\left(\left(M_{m+1}^{\mathrm{upper}}\right)^{3}\left|\Lambda_{m}\right|\right)^{B_{m+1}}
 \end{align*}
 and hence
\[
\ln\left|\Lambda_{m+1}\right|<B_{m+1}\left(3\ln\left(M_{m+1}^{\mathrm{upper}}\right)+\ln\left|\Lambda_{m}\right|\right).
\]
Thus we get by induction that for any $m\in\left\{2,3,\dots\right\}$:
\begin{multline}
\label{eq:log_Lambda_bound}
\begin{aligned}
\ln\left|\Lambda_{m}\right| &<
3{B_{m}}\ln\left(M_{m}^{\mathrm{upper}}\right) + 3{B_{m}B_{m-1}}\ln\left(M_{m-1}^{\mathrm{upper}}\right) 
+\cdots \\&\quad \cdots+
3\left(B_{m}\cdots B_{2}\right)\ln\left(M_{2}^{\mathrm{upper}}\right) 
\\&=
B_{2}\cdots B_{m}\sum_{n=2}^{m} \frac{3\ln\left(M_{n}^{\mathrm{upper}}\right)}{B_{2}\cdots B_{n-1}}
\end{aligned}
\end{multline}
(where we take the denominator of the above sum for $n=2$ to be 1).

To finish the proof, it is sufficient to see that the sum is bounded by $100\ln A$. 
Using the choice of parameters in Definition \ref{defn:sinwalks} and the definition of $B_m$ in~\eqref{eq:premise B beta small}, 
we get that 
\begin{equation}
\label{eq:bidirectional_B_m_bound}
\frac{L_{m}}{\left(\left(\ln A +B\right)m\right)^{13}}<B_{m}<L_{m}
\end{equation}
and by using the definitions of $M^{\mathrm{upper}}_{m}$ and $\alpha$, and inserting the value $L_{1}\cdots L_{m} = A^{m} \left(m!\right)^{B}$, we get that 
\begin{multline}
\label{eq:Mupper_upper_bound}
\ln\left(M^{\mathrm{upper}}_{m}\right) = 2\ln\left(L_{1}\cdots L_{m}\right) + \ln\left(2000\ln\left(\frac{m^2}{\alpha}\right)\right)
\\ < 2\ln\left(L_{1}\cdots L_{m}\right) + 13\ln\left(A\right)m
= 15m\ln{A} + 2B\ln{\left(m!\right)}.
\end{multline}
Thus $3\ln\left(M^{upper}_{m}\right)<100\left(\left(\ln A +B\right)m\right)^{2}$,
and therefore for $n\ge1$:
\begin{equation}
\label{eq:bound_on_lower_level_entropy_bound_sum_term}
\frac{6\ln\left(M^{upper}_{n+1}\right)}{B_{2} \cdots B_{n}} < \frac{ 100\left(\left(\ln A + B\right)^{n-1} n!\right)^{16} }{L_{2} \cdots L_{n}} = \frac{ 100\left(\left(\ln A + B\right)^{n-1} n!\right)^{16} }{A^{n-1} \left(n!\right)^{B}}.\end{equation}
Now, since $B\ge16$, $A\ge2^{200}$ and $B\le32A^{\frac{1}{32}}$  by the assumptions of this Proposition, and since $\ln{A}\le 32 A^{\frac{1}{32}}$, then
$\eqref{eq:bound_on_lower_level_entropy_bound_sum_term} < 100\cdot2^{-4\left(n-1\right)}$  for $n\ge2$
. So, by \eqref{eq:Mupper_upper_bound}:
\begin{align*}
\eqref{eq:log_Lambda_bound}
  &<B_{2}  \cdots B_{m}\left(
    3\ln\left(M^{\mathrm{upper}}_{2}    \right)
  +100\sum_{n\ge1} 2^{-4n}\right)
\\&<B_{2}\cdots B_{m}\left(
400\left(\ln{A}+B\right)^{2}
+ \frac{100}{15}\right)
\\&<10^{6}A^{\frac{1}{32}} B_{2}\cdots B_{m}.
\end{align*}
\end{proof}

The following lemma bounds from below the size of a test $\lambda\in\Lambda_m$, as defined in Definition \ref{def:test}. 
Together with the disjointness of the intervals in \eqref{eq:disjoint intervals}, 
this gives an lower bound on the size of reconstructed scenery for the desired test $\lambda_{\mathrm{passed}} \in \Lambda_m$  
in terms of the parameters $B_2$,\dots,$B_m$, and also alternatively using the parameters $B,M^{\mathrm{upper}}_{m}$. 
This will be used to establish consequence (2) of Theorem \ref{thm:main_theorem}, 
i.e that $\left|\lambda_{\mathrm{passed}}\left(\omega'\right)\right| \ge N^{\theta}$ 
($N$, the length of the random walk in this theorem, is roughly $M^{\mathrm{upper}}_{m}$ for $m=k$).
Additionally, this lemma will be used to establish consequence (3) of Theorem \ref{thm:main_theorem}, together with Proposition \ref{prop:Bound_on_size_of_lambda}.

\begin{lem}
\label{lem:how_much_psila}
Assume that $B\ge13$ and $A\ge20000B$. 
For any $m\in\mathbb{N}$ and for any $\lambda\in\Lambda_{m}$,
it holds that $\left|\lambda\right|=B_{2} \cdots B_{m}$.

In particular, if $m\ge20A$ then for any $\lambda\in\Lambda_{m}\left(A,B,k\right)$, 
$\frac{\ln\left|\lambda\right|}{\ln\left(M^{\mathrm{upper}}_{m}\right)} > \frac{1}{2}-\frac{200}{B}$.
\end{lem}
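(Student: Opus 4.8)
The first claim, $|\lambda| = B_2\cdots B_m$ for every $\lambda\in\Lambda_m$, is proved by induction on $m$ using the recursive Definition \ref{def:Lambda}. For $m=1$ a level-$1$ test is the single tuple $((0,0))$, so $|\lambda|=1$, which is the empty product $B_2\cdots B_1$. For the inductive step, suppose every $T\in\Lambda_m$ has $|T|=B_2\cdots B_m$. An element $\lambda\in\Lambda_{m+1}$ is obtained by choosing, for each $1\le j\le B_{m+1}$, a test $T_j\in\Lambda_m$ together with a shift $(t_j,\Delta_j)$, and forming the concatenation of the shifted copies $((t+t_j,\Delta+2\Delta_j))_{(t,\Delta)\in T_j}$ over all $j$. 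Since each shifted copy contributes exactly $|T_j|=B_2\cdots B_m$ tuples and there are $B_{m+1}$ of them, we get $|\lambda| = B_{m+1}\cdot B_2\cdots B_m = B_2\cdots B_{m+1}$, as required. (Note this counts tuples with multiplicity, which is the convention in Definition \ref{def:test}.)

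For the ``in particular'' clause, I need a lower bound on $\dfrac{\ln|\lambda|}{\ln(M^{\mathrm{upper}}_m)} = \dfrac{\ln(B_2\cdots B_m)}{\ln(M^{\mathrm{upper}}_m)}$ when $m\ge 20A$. For the numerator, use the lower half of the two-sided estimate $\dfrac{L_m}{((\ln A + B)m)^{13}} < B_m < L_m$ from \eqref{eq:bidirectional_B_m_bound} (which is available from the choice of parameters in Definition \ref{defn:sinwalks} and the definition \eqref{eq:premise B beta small} of $B_m$): this gives
\[
\ln(B_2\cdots B_m) > \sum_{n=2}^m \ln L_n - 13\sum_{n=2}^m \ln\big((\ln A + B)n\big) = \ln(L_1\cdots L_m) - \ln L_1 - 13\sum_{n=2}^m\ln\big((\ln A+B)n\big).
\]
Plugging in $L_1\cdots L_m = A^m(m!)^B$ and $L_1 = A$, the dominant term is $m\ln A + B\ln(m!)$, while the correction $13\sum_{n\le m}\ln((\ln A+B)n) = O(m\ln((\ln A+B)m))$ is lower order once $m$ is large compared to $A$ (this is where $m\ge 20A$, $A\ge 20000B$, $B\ge 13$ enter, making $\ln A$ and $B$ small relative to $m$). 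For the denominator, the upper bound \eqref{eq:Mupper_upper_bound}, $\ln(M^{\mathrm{upper}}_m) < 2\ln(L_1\cdots L_m) + 13m\ln A = 2(m\ln A + B\ln(m!)) + 13m\ln A$, shows $\ln(M^{\mathrm{upper}}_m)$ is at most $(2+o(1))\ln(L_1\cdots L_m)$, with the $13m\ln A$ term again lower order relative to $B\ln(m!) \approx Bm\ln m$ for $m\ge 20A$. Dividing, the ratio tends to $\tfrac12$, and a careful bookkeeping of the constants yields the stated $\tfrac12 - \tfrac{200}{B}$.

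The main obstacle is purely the bookkeeping in the second part: one must track how large $m$ must be (relative to $A$ and $B$) so that all the error terms — $\ln L_1 = \ln A$, the $13\sum\ln((\ln A+B)n)$ term in the numerator, and the $13m\ln A$ term in the denominator — are simultaneously small enough that the ratio exceeds $\tfrac12 - \tfrac{200}{B}$ rather than merely converging to $\tfrac12$. The hypotheses $m\ge 20A$, $A\ge 20000B$, $B\ge 13$ are calibrated precisely for this, and using $\ln(m!) \ge m\ln m - m \ge m\ln(20A) - m$ together with $m\ge 20A$ should make each error term a controlled fraction (of order $1/B$) of the main term $Bm\ln m$; I would isolate a factor of $B$ in the denominator-to-numerator comparison to land the $200/B$ slack.
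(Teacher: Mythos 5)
Your proposal is correct and follows essentially the same route as the paper: an induction from Definition \ref{def:Lambda} for the multiplicativity $\left|\lambda\right|=B_{2}\cdots B_{m}$, and then the two-sided bound \eqref{eq:bidirectional_B_m_bound} on $B_m$, the closed form $L_1\cdots L_m = A^m(m!)^B$, and the upper bound \eqref{eq:Mupper_upper_bound} on $\ln M^{\mathrm{upper}}_m$ for the ratio estimate. The only thing you leave as a sketch is the final bookkeeping, but the plan is sound: with $m\ge 20A$ one indeed gets $m\ln A\le\ln(m!)$, so the numerator is at least $B\ln(m!)\left(1-O(1/B)\right)$ while the denominator is at most $2B\ln(m!)\left(1+O(1/B)\right)$, and the resulting ratio exceeds $\tfrac12-\tfrac{200}{B}$ exactly as in the paper's computation around \eqref{eq:psila_Count_Ratio}.
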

\begin{proof}
Since Definition \ref{def:Lambda} is recursive, we shall prove this lemma by induction.
If $m=1$ then 
for
any $\lambda\in\Lambda_{m}$ , $\left|\lambda\right|=1$
. Suppose that the lemma holds for $m$ and let $\lambda$ be in $\Lambda_{m+1}$. Then 
\[
		\lambda=\left(\left(t+t_{j},\Delta+\Delta_{j}\right):j\in\left\{ 1,\dots,B_{m+1}\right\} ,\left(t,\Delta\right)\in \lambda_{j}\right)
\]
for some $\left\{ t_{1},\dots,t_{B_{m+1}}\right\} $ that are distinct from each other, some
$\Delta_{1},\dots,\Delta_{B_{m+1}} $ , and some $\lambda_{1},\dots,\lambda_{B_{m+1}} \in \Lambda_{m}$.
Thus by induction:
\[
\left|\lambda\right|=\sum_{j=1}^{B_{m+1}}\left|\lambda_{j}\right|=B_{m}\left|\lambda_{1}\right|=B_{m+1} B_{m} \cdots B_{2}, \]
and therefore the first assertion of the lemma holds.

Now, by \eqref{eq:bidirectional_B_m_bound} and the assumptions on $A$ and $B$,
\begin{align*}
\ln\left|\lambda\right| >& \ln\left(L_{2}\cdots L_{m}\right) - \left(13\left(m-1\right)\ln\left(\ln A +B\right) +13\ln\left(\frac{m!}{1!}\right)\right)
\\ >& \ln\left(L_{2}\cdots L_{m}\right) - \left(13m\ln\left(1.00005 A\right) +13\ln\left(m!\right)\right)
\\ >& \ln\left(L_{2}\cdots L_{m}\right) - \left(26\ln\left(A\right)m +13\ln\left(m!\right)\right).
\end{align*}

By combining this with \eqref{eq:Mupper_upper_bound}, inserting the value $L_{2}\cdots L_{m} = A^{m-1} \left(m!\right)^{B}$, and using elementary manipulation of inequalities, we get:
\begin{equation}
\label{eq:psila_Count_Ratio}
\frac{\ln\left|\lambda\right|}{\ln\left(M^{\mathrm{upper}}_{m}\right)} 
> \frac{1}{2} - 100\cdot\frac{\ln\left(A\right)m+\ln\left(m!\right)}{\ln\left(A\right)m+B\ln\left(m!\right)}
> \frac{1}{2} - \frac{100}{B}\left(\frac{\ln\left(A\right)}{\ln\left(m!\right)}+1\right),
\end{equation}
and since $m\ge20A$, then $\eqref{eq:psila_Count_Ratio}>\frac{1}{2}-\frac{200}{B}$.
\end{proof}

\subsection{Existence of a satisfied test in the set $\Lambda_{k}$}
\label{section_some_test_satisfied}
The following proposition will be used to establish consequence (1) of Theorem \ref{thm:main_theorem}, 
i.e that there exists a test $\lambda_{\mathrm{passed}}\in\Lambda_m$ such that 
the adversarially changed record of the observed scenery and walk, $x'$, passes the test $\lambda_{\mathrm{passed}}$ relative to the unchanged scenery $\sigma$. 
In simpler terms, consequence (1) is that the scenery $\sigma$ can be reconstructed from $x'$, while consequences (2) and (3) ensure that this reconstruction is 
large and not achieved by sheer luck. 
The places where $x$ and $x'$ from Theorem \ref{thm:main_theorem} are different is denoted by $E$, and thus in order for the test $\lambda_{\mathrm{passed}}$ to 
be as in Definition \ref{def:passes_test} we want to ensure two things: 
(1) that, in the notations of Definition \ref{def:passes_test}, $X^{\omega'}_{t}+\Delta_{i} = X^{\omega}_{t}$; 
and (2) that this $t$ is not in $E$. These two requirements, up to some relaxations, are the consequences (1) and (2) 
of Proposition \ref{prop:existence_of_satisfied_test}:

\begin{prop}
\label{prop:existence_of_satisfied_test}
Let $m\in\left\{2,\dots,k\right\}$, $\omega\in\walks$, $j\in\left\{0,1,\dots,\len_{m}\left(\omega\right)\right\}$, 
and let $E^{-},E^{+}$ be disjoint subsets of $E\subset\left\{0,1,\dots,\len\left(\omega\right)\right\}$. 
Denote $i_{0} = i_{\omega,L_{1}\cdots L_{m}}\left(j\right)$. 
Assume that for some $0\le\alpha<\left(\prod_{r=2}^{m}\left(1+\frac{1}{r^2}\right)\right)^{-1}$:
\[
\left|I_{m,j}^{0}\left(\omega\right) \cap \left(
	\cup_{i=1}^{k}\badb_{i,0}\left(\omega\right) \cup E
\right)\right| < \alpha \left|I_{m,j}^{0}\left(\omega\right)\right|
\]
Then there exists $\lambda\in\Lambda_{m}$ such that for any $\left(t,\Delta\right)\in\lambda$ it holds that:
\begin{enumerate}
\item 
$
\Delta = 
2\left| E^{+} \cap \left\{i_{0},\dots,i_{0} + t\right\} \right| - 
2\left| E^{-} \cap \left\{i_{0},\dots,i_{0} + t\right\} \right|
$.
\item 
$\left|I'_{t}\cap E \right| < 
	\left(\prod_{r=2}^{m}\left(1+\frac{1}{r^2}\right)\right)\alpha \left|I'_{t}\right|$
, for $I'_{t} = \left\{i_{0}+t,\dots,\next\left(\omega,L_{1},i_{0}+t\right)\right\}$.
\item $M^{\mathrm{lower}}_{1} \le \left|I'_{t}\right| \le M^{\mathrm{upper}}_{1}$.
\end{enumerate}
Also, For any $\left(t_{1},\Delta_{1}\right),\left(t_{2},\Delta_{2}\right)\in\lambda$, if they are different, then:
\[\left\{X^{\omega}_{t} : t\in I'_{t_{1}}\right\} \cap
\left\{X^{\omega}_{t} : t\in I'_{t_{2}}\right\} = \emptyset\] 
and additionally:
\[\left|\left\{
X^{\omega}_{t} : t\in I'_{t_{1}} , \left(t_{1},\Delta_{1}\right)\in\lambda
\right\}\right| \ge L_{1} \cdot \left|\lambda\right|.\]
\end{prop}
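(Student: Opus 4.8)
The plan is to prove Proposition~\ref{prop:existence_of_satisfied_test} by induction on $m$, mirroring the recursive definition of $\Lambda_m$ in Definition~\ref{def:Lambda}. We carry out the induction for all $m\ge 1$; for $m=1$ the statement is immediate, since $\Lambda_1=\{((0,0))\}$, the hypothesis (with the empty product read as $1$) forces $|I_{1,j}^{0}(\omega)\cap(\cup_{i}\badb_{i,0}(\omega)\cup E)|<|I_{1,j}^{0}(\omega)|$ and hence $j\notin\badb_1(\omega)$, and then $\lambda=((0,0))$ works: $I'_0$ is the $j$-th level-$1$ interval (up to its right endpoint), so its scenery interval has at least $L_1$ points, it has length in $[M_1^{\mathrm{lower}},M_1^{\mathrm{upper}}]$ and error density below $\alpha$, while the disjointness and counting assertions are trivial for a one-element test. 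So fix $m\ge2$ and assume the proposition at level $m-1$.

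First I would observe that the hypothesis forces $j\in\goodb_m(\omega)$: otherwise $I_{m,j}^{0}(\omega)\subseteq\badb_{m,0}(\omega)$, which together with $\alpha<1$ contradicts the hypothesis. Reading off Definitions~\ref{def:badb_length}, \ref{def:badb_redlength} and~\ref{def:badblocal}, this gives the three ingredients I will use: $M_m^{\mathrm{lower}}\le|I_{m,j}^{0}(\omega)|\le M_m^{\mathrm{upper}}$; the level-$m$ interval $I_{m,j}^{0}(\omega)$ is the disjoint union of at most $|I_{m,j}(\omega)|\le R_m^{\mathrm{upper}}$ level-$(m-1)$ sub-intervals; and $\overline{MLT}_{m,j}(\omega)\le\frac{\beta_m}{2}R_m^{\mathrm{upper}}$, i.e. each integer is the left endpoint of at most $\frac{\beta_m}{2}R_m^{\mathrm{upper}}$ of these sub-intervals. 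Next, call a level-$(m-1)$ sub-interval $I_{m-1,j'}^{0}(\omega)\subseteq I_{m,j}^{0}(\omega)$ \emph{good} when $|I_{m-1,j'}^{0}(\omega)\cap(\cup_{i}\badb_{i,0}(\omega)\cup E)|<(1+\tfrac{1}{m^2})\alpha\,|I_{m-1,j'}^{0}(\omega)|$; a Markov-type estimate against the hypothesis shows the good sub-intervals cover more than a $\frac{1}{m^2+1}$ fraction of $I_{m,j}^{0}(\omega)$ by length, and since $(1+\tfrac{1}{m^2})\alpha<1$ a good sub-interval cannot contain all of $\badb_{m-1,0}(\omega)$, hence lies in $\goodb_{m-1}(\omega)$ and in particular has length in $[M_{m-1}^{\mathrm{lower}},M_{m-1}^{\mathrm{upper}}]$.

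The crux is then a packing step, and this is the step I expect to be the main obstacle, since it is where the local-time control $\badb_m^{\mathrm{local}}$ must be converted into a clean combinatorial count. All left-endpoint scenery positions occurring inside $I_{m,j}^{0}(\omega)$ lie in a window of $2L_m$ consecutive integers, and two level-$(m-1)$ sub-intervals whose positions are sufficiently far apart have disjoint scenery intervals. Combining the length lower bound $|I_{m,j}^{0}(\omega)|\ge M_m^{\mathrm{lower}}$, the length upper bound $M_{m-1}^{\mathrm{upper}}$ on a good sub-interval, and the per-position bound $\frac{\beta_m}{2}R_m^{\mathrm{upper}}$, a selection argument produces $\gtrsim\frac{M_m^{\mathrm{lower}}}{(m^2+1)\,\beta_m\,M_{m-1}^{\mathrm{upper}}\,R_m^{\mathrm{upper}}}$ good sub-intervals with pairwise disjoint scenery intervals; for the calibrated choices of $\beta_m$ and of $B_m=\lfloor\frac{M_m^{\mathrm{lower}}}{\beta_m M_{m-1}^{\mathrm{upper}}R_m^{\mathrm{upper}}(m^2+1)}\rfloor$ from Definition~\ref{defn:sinwalks} and~\eqref{eq:premise B beta small} this count is at least $B_m$. (This is exactly the content of Lemma~\ref{inner lemma : J ratio}; the delicate part is matching constants, and it is where the $(m^2+1)$ factor in $B_m$ and the leading constant in $\beta_m$ pay off.) Picking such sub-intervals $I_{m-1,j'_1}^{0}(\omega),\dots,I_{m-1,j'_{B_m}}^{0}(\omega)$ — discarding the first sub-interval of $I_{m,j}^{0}(\omega)$ if needed so that no offset is $0$ — I would write $i_0^{(s)}$ for the left endpoint of the $s$-th one, put $t_s:=i_0^{(s)}-i_0$, and let $\Delta_s$ be the signed error count $|E^+\cap\{i_0,\dots,i_0^{(s)}-1\}|-|E^-\cap\{i_0,\dots,i_0^{(s)}-1\}|$. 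The $t_s$ are distinct; since $j\in\goodb_m^{\mathrm{upper}}$ we have $1\le t_s\le M_m^{\mathrm{upper}}$; and since the total error mass in $I_{m,j}^{0}(\omega)$ is below $\alpha M_m^{\mathrm{upper}}$, also $|\Delta_s|\le M_m^{\mathrm{upper}}$.

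Finally I would apply the inductive hypothesis to each sub-interval $I_{m-1,j'_s}^{0}(\omega)$ — with base point $i_0^{(s)}$, the same $E^{\pm}$, and parameter $(1+\tfrac{1}{m^2})\alpha$, which is admissible since $(1+\tfrac{1}{m^2})\alpha<\big(\prod_{r=2}^{m-1}(1+\tfrac{1}{r^2})\big)^{-1}$ — obtaining tests $\lambda_s\in\Lambda_{m-1}$, and set $\lambda:=\big((t+t_s,\Delta+2\Delta_s):1\le s\le B_m,\ (t,\Delta)\in\lambda_s\big)$, which lies in $\Lambda_m$ by Definition~\ref{def:Lambda}. Conclusion~(1) then follows by additivity of the signed error counts along the nested decomposition of the index range into the part before $i_0^{(s)}$ and the part handled by $\lambda_s$, together with the inductive conclusion~(1); conclusion~(2) is the inductive error-fraction bound multiplied by the extra factor $1+\tfrac{1}{m^2}$, which telescopes to $\prod_{r=2}^{m}(1+\tfrac{1}{r^2})$; conclusion~(3) is inherited verbatim. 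For the disjointness assertion, two elements of $\lambda$ arising from the same $\lambda_s$ are handled by the inductive disjointness, while two arising from $\lambda_{s_1}$ and $\lambda_{s_2}$ with $s_1\neq s_2$ have $I'_{t_1},I'_{t_2}$ sitting inside $I_{m-1,j'_{s_1}}^{0}(\omega)$ and $I_{m-1,j'_{s_2}}^{0}(\omega)$, whose scenery intervals were chosen disjoint. The counting assertion then follows by induction without tracking which bottom-level intervals coincide: $\{X^{\omega}_t:t\in I'_{t_1},(t_1,\Delta_1)\in\lambda\}$ is the disjoint union over $s$ of the corresponding sets for the $\lambda_s$, so its size is at least $\sum_{s}L_1|\lambda_s|=L_1|\lambda|$, and $|\lambda|=B_2\cdots B_m$ by Lemma~\ref{lem:how_much_psila}. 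The one remaining chore is to be consistent about the shared right endpoints $\next(\omega,L_1,\cdot)$ of consecutive $L_1$-reduced intervals when telescoping~(2) and~(3); this costs only an additive constant per interval and is absorbed by the slack built into the parameters of Definition~\ref{defn:sinwalks}.
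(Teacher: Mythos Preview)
Your proposal is correct and follows essentially the same inductive argument as the paper: you identify that $j\in\goodb_m(\omega)$, isolate the ``good'' level-$(m-1)$ sub-intervals via a Markov-type estimate (exactly Lemma~\ref{inner lemma : J ratio}), use the local-time bound to find at least $2B_m$ of them with distinct reduced-walk positions and hence $B_m$ with pairwise disjoint scenery intervals, and then apply the inductive hypothesis to each, assembling $\lambda\in\Lambda_m$ via the recursive formula of Definition~\ref{def:Lambda}. The only cosmetic differences are that the paper takes $m=2$ as the base case rather than $m=1$, and organizes the count $|X^J|>2B_m$ slightly differently (keeping $|I_{m,j}(\omega)|$ in the intermediate inequality rather than replacing it by $R_m^{\mathrm{upper}}$ at once); your flagging of the off-by-one at the right endpoint of $I'_t$ is a valid caution that the paper leaves implicit.
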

We are going to prove this by induction on $m$, increasing from $m=2$. 
We follow the proof strategy described at Subsection \ref{subsection_def_of_tests}.
We begin by finding $B_{m}$ intervals of the form $I_{m,j}^{0}\left(\omega\right)$ with pairwise disjoint scenery intervals, such that each of them satisfies the 
requirements of Proposition \ref{prop:existence_of_satisfied_test} for $m-1$. By applying the Proposition to each of them, we will prove this proposition for $m$. 
For the sake of efficiency, denote $\cup_{i=1}^{k}\badb_{i,0}\left(\omega\right) \cup E$ by $\badb_{\mathrm{all}}$.

The following Lemma, which will be used in several places in the proof of Proposition \ref{prop:existence_of_satisfied_test}, is the motivation of the definition of the various sets $\badb_{m,n}$-s. 
All the work that we did in section \ref{section_sinwalks}, sums up to Lemma \ref{inner lemma : goodness} and Proposition \ref{prop:prob_of_sinuosity}, 
which will be all that we need for the rest of the proof.
\begin{lem}
\label{inner lemma : goodness}
Let $m\in\left\{1,\dots,k\right\}$, $\omega\in\walks$ and $j\in\left\{0,1,\dots,\len_{m}\left(\omega\right)\right\}$.
If:
\begin{equation}
\label{eq:premise_of_goodness_lemms}
\left|I_{m,j}^{0}\left(\omega\right) \cap \badb_{\mathrm{all}}\right| < \left|I_{m,j}^{0}\left(\omega\right)\right|
\end{equation}
then:
\begin{enumerate}
\item	$M^{\mathrm{lower}}_{m} \le \left|I_{m,j}^{0}\left(\omega\right)\right| \le M^{\mathrm{upper}}_{m}$
\item	$R^{\mathrm{lower}}_{m} \le \left|I_{m,j}\left(\omega\right)\right| \le R^{\mathrm{upper}}_{m}$
\item	$\max_{x\in\mathbb{Z}}\left|\left\{ t\in I_{m,j}\left(\omega\right):X_{t}^{\red\left(\omega,L_{1}\cdots L_{m-1}\right)}=x\right\} \right| \le \frac{\beta_{m}}{2}\left|I_{m,j}\left(\omega\right)\right|$
\end{enumerate}
\end{lem}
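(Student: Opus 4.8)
The plan is to prove the contrapositive-style observation that if any of the three bounds fails for the index $j$, then $j$ lies in one of the "bad" sets $\badb_m^{\mathrm{length}}(\omega)$, $\badb_m^{\mathrm{redLength}}(\omega)$ or $\badb_m^{\mathrm{local}}(\omega)$, hence $j \in \badb_m(\omega)$, and therefore $I_{m,j}^0(\omega) \subseteq \badb_{m,0}(\omega) \subseteq \badb_{\mathrm{all}}$, which contradicts the premise \eqref{eq:premise_of_goodness_lemms}: indeed $I_{m,j}^0(\omega) \subseteq \badb_{\mathrm{all}}$ forces $|I_{m,j}^0(\omega) \cap \badb_{\mathrm{all}}| = |I_{m,j}^0(\omega)|$, not strictly less. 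So the whole lemma is essentially an unwinding of the definitions of the bad sets together with the observation that $\badb_{m,0}(\omega) = \bigcup_{j \in \badb_m(\omega)} I_{m,j}^0(\omega)$ is a subset of $\bigcup_{i=1}^k \badb_{i,0}(\omega)$, which is one of the sets comprising $\badb_{\mathrm{all}}$.

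Concretely, I would argue as follows. First suppose $\left|I_{m,j}^0(\omega)\right| > M_m^{\mathrm{upper}}$ or $\left|I_{m,j}^0(\omega)\right| < M_m^{\mathrm{lower}}$; then by Definitions \ref{def:badbupper} and \ref{def:badblower}, $j \in \badb_m^{\mathrm{upper}}(\omega)$ or $j \in \badb_m^{\mathrm{lower}}(\omega)$, so $j \in \badb_m^{\mathrm{length}}(\omega) \subseteq \badb_m(\omega)$. Similarly, if $\left|I_{m,j}(\omega)\right| > R_m^{\mathrm{upper}}$ or $\left|I_{m,j}(\omega)\right| < R_m^{\mathrm{lower}}$, then by Definitions \ref{def:badbredupper} and \ref{def:badbredlower}, $j \in \badb_m^{\mathrm{redUpper}}(\omega) \cup \badb_m^{\mathrm{redLower}}(\omega) = \badb_m^{\mathrm{redLength}}(\omega) \subseteq \badb_m(\omega)$. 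Finally, if $\max_{x\in\mathbb{Z}}\left|\{ t\in I_{m,j}(\omega):X_t^{\red(\omega,L_1\cdots L_{m-1})}=x\}\right| > \frac{\beta_m}{2}\left|I_{m,j}(\omega)\right|$, i.e.\ $\overline{MLT}_{m,j}(\omega) > \frac{\beta_m}{2}\left|I_{m,j}(\omega)\right|$, then $j \in \badb_m^{\mathrm{local}}(\omega) \subseteq \badb_m(\omega)$ by Definition \ref{def:badblocal}. In every case $j \in \badb_m(\omega)$, so by Definition \ref{def:badb_asterix_at_levels} applied with $m'=0$ we have $I_{m,j}^0(\omega) \subseteq \badb_{m,0}(\omega)$, and since $\badb_{m,0}(\omega) \subseteq \bigcup_{i=1}^k \badb_{i,0}(\omega) \subseteq \badb_{\mathrm{all}}$, the intersection $I_{m,j}^0(\omega) \cap \badb_{\mathrm{all}}$ equals $I_{m,j}^0(\omega)$, contradicting \eqref{eq:premise_of_goodness_lemms}. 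Hence none of the three violating cases can occur, which is exactly conclusions (1), (2) and (3).

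This lemma is entirely bookkeeping; there is no real obstacle, only the need to track the chain of inclusions $\badb_m^{\mathrm{length}}, \badb_m^{\mathrm{redLength}}, \badb_m^{\mathrm{local}} \subseteq \badb_m \Rightarrow I_{m,j}^0(\omega) \subseteq \badb_{m,0} \subseteq \bigcup_i \badb_{i,0} \subseteq \badb_{\mathrm{all}}$ carefully. The one point worth a sentence of care is the degenerate case $m=1$ for conclusion (3): there $\red(\omega,L_1\cdots L_{m-1}) = \red(\omega, \text{empty product}) = \omega$ itself (or, more precisely, the interpretation of the reduced walk at level $0$), and $I_{1,j}(\omega) = I_{1,j}^0(\omega)$, so the statement still reads off directly from $j \notin \badb_1^{\mathrm{local}}(\omega)$. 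With that remark in place the proof is complete.
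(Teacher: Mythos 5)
Your proof is correct and is essentially the paper's own argument, phrased as a contrapositive (if a conclusion fails then $j\in\badb_m(\omega)$, so $I_{m,j}^{0}(\omega)\subseteq\badb_{m,0}(\omega)\subseteq\badb_{\mathrm{all}}$, contradicting the premise) rather than directly (the premise exhibits a point of $I_{m,j}^{0}(\omega)$ outside every bad set, forcing $j\in\goodb_m(\omega)$). Both just unwind Definitions \ref{def:badbupper}--\ref{def:badblocal} and \ref{def:badb_asterix_at_levels} in the same way; your extra remark on the $m=1$ degenerate case for conclusion (3) is a nice, correct detail the paper omits.
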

\begin{proof}
The premise implies that:
\[
\left|I_{m,j}^{0}\left(\omega\right) \cap \left(
	\cap_{i=1}^{k}\goodb_{i,0}\left(\omega\right) \cap
	E^{c}
\right)\right| > 0
\]
and thus:
\[
j \in \goodb_{m,0}\left(\omega\right) = 
\goodb^{\mathrm{length}}_{m,0}\left(\omega\right) \cap \goodb^{\mathrm{redLength}}_{m,0}\left(\omega\right) \cap
\goodb^{\mathrm{local}}_{m,0}\left(\omega\right)
\]
which means, by definitions \ref{def:badbupper}, \ref{def:badbredupper}, \ref{def:badblower}, \ref{def:badbredlower}, \ref{def:badblocal}, that 
Lemma \ref{inner lemma : goodness} holds.
\end{proof}
Note that since $0\le\alpha<\left(\prod_{r=2}^{m}\left(1+\frac{1}{r^2}\right)\right)^{-1}$, a sufficient condition for Lemma \ref{inner lemma : goodness} and \eqref{eq:premise_of_goodness_lemms} is:
\[
\left|I_{m,j}^{0}\left(\omega\right) \cap \badb_{\mathrm{all}}\right| < \left(\prod_{r=2}^{m}\left(1+\frac{1}{r^2}\right)\right)\alpha\left|I_{m,j}^{0}\left(\omega\right)\right|.
\]
Now we may define the following set:
\begin{equation}
\label{eq:def_of_J}
J \coloneqq \left\{ i\in I_{m,j}\left(\omega\right) : 
	\left|I_{m-1,i}^{0}\left(\omega\right) \cap \badb_{\mathrm{all}}\right| < \left(1+\frac{1}{m^2}\right) \alpha\left|I_{m-1,i}^{0}\left(\omega\right)\right|
\right\}.
\end{equation}
\begin{lem}
\label{inner lemma : J ratio}
Let $m\in\left\{2,\dots,k\right\}$, $\omega\in\walks$ and $j\in\left\{0,1,\dots,\len_{m}\left(\omega\right)\right\}$.
If:
\[
\left|I_{m,j}^{0}\left(\omega\right) \cap \badb_{\mathrm{all}}\right| < \alpha\left|I_{m,j}^{0}\left(\omega\right)\right|
\]
then for the $J$ defined above in \eqref{eq:def_of_J}:
\begin{equation}
\label{eq:upper_bound_on_J}
\frac{\left|J\right|}{\left|I_{m,j}\left(\omega\right)\right|} > \frac{M^{\mathrm{lower}}_{m}}{ \left(m^{2} + 1\right) M^{\mathrm{upper}}_{m-1} R^{\mathrm{upper}}_{m} }.
\end{equation}
\end{lem}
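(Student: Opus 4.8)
The plan is to run a single averaging argument over the level-$(m-1)$ sub-blocks that make up $I_{m,j}^{0}\left(\omega\right)$, feeding in the interval-length bounds that Lemma \ref{inner lemma : goodness} already provides at levels $m$ and $m-1$.

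First I would extract from Lemma \ref{inner lemma : goodness}, applied at level $m$ to the index $j$ (legitimate because the hypothesis and $\alpha<1$ give $|I_{m,j}^{0}\left(\omega\right)\cap\badb_{\mathrm{all}}|<|I_{m,j}^{0}\left(\omega\right)|$, which is the premise of that lemma), the two facts $|I_{m,j}^{0}\left(\omega\right)|\ge M^{\mathrm{lower}}_{m}$ and $|I_{m,j}\left(\omega\right)|\le R^{\mathrm{upper}}_{m}$. Then I would use the hierarchical decomposition of Definition \ref{defn:intervals}, $I_{m,j}^{0}\left(\omega\right)=\bigsqcup_{i\in I_{m,j}\left(\omega\right)}I_{m-1,i}^{0}\left(\omega\right)$, and the induced identity $|I_{m,j}^{0}\left(\omega\right)\cap\badb_{\mathrm{all}}|=\sum_{i\in I_{m,j}\left(\omega\right)}|I_{m-1,i}^{0}\left(\omega\right)\cap\badb_{\mathrm{all}}|$. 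Putting $S\coloneqq I_{m,j}\left(\omega\right)\setminus J$, every $i\in S$ satisfies $|I_{m-1,i}^{0}\left(\omega\right)\cap\badb_{\mathrm{all}}|\ge(1+\tfrac1{m^{2}})\alpha\,|I_{m-1,i}^{0}\left(\omega\right)|$ by the definition \eqref{eq:def_of_J} of $J$; summing over $i\in S$ and bounding the total by the hypothesis $|I_{m,j}^{0}\left(\omega\right)\cap\badb_{\mathrm{all}}|<\alpha|I_{m,j}^{0}\left(\omega\right)|$ gives $\sum_{i\in S}|I_{m-1,i}^{0}\left(\omega\right)|<\tfrac{m^{2}}{m^{2}+1}|I_{m,j}^{0}\left(\omega\right)|$, and subtracting from $|I_{m,j}^{0}\left(\omega\right)|$ leaves $\sum_{i\in J}|I_{m-1,i}^{0}\left(\omega\right)|>\tfrac1{m^{2}+1}|I_{m,j}^{0}\left(\omega\right)|\ge\tfrac1{m^{2}+1}M^{\mathrm{lower}}_{m}$.

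To finish I would bound each term on the left from above by $M^{\mathrm{upper}}_{m-1}$. This is where the standing restriction $\alpha<\bigl(\prod_{r=2}^{m}(1+\tfrac1{r^{2}})\bigr)^{-1}$ enters: it yields $(1+\tfrac1{m^{2}})\alpha<\bigl(\prod_{r=2}^{m-1}(1+\tfrac1{r^{2}})\bigr)^{-1}\le1$ (the product over an empty range, for $m=2$, being $1$), so for $i\in J$ we have $|I_{m-1,i}^{0}\left(\omega\right)\cap\badb_{\mathrm{all}}|<|I_{m-1,i}^{0}\left(\omega\right)|$, and Lemma \ref{inner lemma : goodness} applied at level $m-1$ to the index $i$ gives $|I_{m-1,i}^{0}\left(\omega\right)|\le M^{\mathrm{upper}}_{m-1}$. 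Hence $|J|\,M^{\mathrm{upper}}_{m-1}\ge\sum_{i\in J}|I_{m-1,i}^{0}\left(\omega\right)|>\tfrac1{m^{2}+1}M^{\mathrm{lower}}_{m}$, so $|J|>\tfrac{M^{\mathrm{lower}}_{m}}{(m^{2}+1)M^{\mathrm{upper}}_{m-1}}$, and dividing by $|I_{m,j}\left(\omega\right)|\le R^{\mathrm{upper}}_{m}$ produces exactly \eqref{eq:upper_bound_on_J}. The only genuine point of care --- hardly an obstacle --- is precisely this last bookkeeping: the slack between the threshold $(1+\tfrac1{m^{2}})\alpha$ that defines ``good'' sub-blocks and the threshold $1$ needed to invoke Lemma \ref{inner lemma : goodness} one level down must be non-negative, which is exactly what the hypothesis on $\alpha$ guarantees; everything else is a single round of averaging.
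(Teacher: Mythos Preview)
Your proposal is correct and follows essentially the same averaging argument as the paper's proof: both decompose $I_{m,j}^{0}$ into the sub-blocks $I_{m-1,i}^{0}$ for $i\in I_{m,j}$, lower-bound $\sum_{i\in J}|I_{m-1,i}^{0}|$ by $\tfrac{1}{m^{2}+1}|I_{m,j}^{0}|$ via the defining threshold of $J$ and the hypothesis, and then apply the length bounds of Lemma~\ref{inner lemma : goodness} at levels $m$ and $m-1$ together with $|I_{m,j}|\le R^{\mathrm{upper}}_{m}$. Your write-up is in fact slightly more explicit than the paper's in checking that the premise of Lemma~\ref{inner lemma : goodness} holds for each $i\in J$ at level $m-1$ (the paper absorbs this into the remark preceding the statement), but the substance is the same.
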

\begin{proof}
By Dividing $I_{m,j}^{0}$ 
into sub-intervals $I_{m-1,i}^{0}$ for $i\in I_{m,j}$, we get:
\begin{align*}
\left|I_{m,j}^{0}\left(\omega\right) \cap \badb_{\mathrm{all}}\right| &=
\sum_{i\in J}\left|I_{m-1,i}^{0}\left(\omega\right) \cap \badb_{\mathrm{all}}\right| +
\sum_{i\in I_{m,j}\left(\omega\right) - J}\left|I_{m-1,i}^{0}\left(\omega\right) \cap \badb_{\mathrm{all}}\right| \\ &\ge
\sum_{i\in I_{m,j}\left(\omega\right) - J} \left(1+\frac{1}{m^2}\right) \alpha \left|I_{m-1,i}^{0}\left(\omega\right)\right| \\ &=
-\left(1+\frac{1}{m^2}\right) \alpha  \sum_{i\in J}\left|I_{m-1,i}^{0}\left(\omega\right)\right| +
\left(1+\frac{1}{m^2}\right) \alpha \left|I_{m,j}^{0}\left(\omega\right)\right|.
\end{align*}
As we assumed, $\left|I_{m,j}^{0}\left(\omega\right) \cap \badb_{\mathrm{all}}\right| < \alpha \left|I_{m,j}^{0}\left(\omega\right)\right|$. Thus:
\[
\left(1+\frac{1}{m^2}\right) \alpha  \sum_{i\in J}\left|I_{m-1,i}^{0}\left(\omega\right)\right| >
\frac{1}{m^2} \alpha \left|I_{m,j}^{0}\left(\omega\right)\right|
\]
and by applying Lemma \ref{inner lemma : goodness} to both sides we get:
\[
\left(1+\frac{1}{m^2}\right) \alpha  \left|J\right| M^{\mathrm{upper}}_{m-1} >
\frac{1}{m^2} \alpha M^{\mathrm{lower}}_{m}.
\]
Since, also by Lemma \ref{inner lemma : goodness}, $\left|I_{m,j}\left(\omega\right)\right| \le R^{\mathrm{upper}}_{m}$, we get the conclusion 
of Lemma \ref{inner lemma : J ratio}.

\end{proof}

\begin{proof}[Proof of Proposition \ref{prop:existence_of_satisfied_test}]
By the definition of $B_{m}$ at \eqref{eq:premise B beta small}, Lemma \ref{inner lemma : J ratio} implies that 
\begin{equation}
\label{eq: J I ratio bound compound}
\left|J\right| > \beta_{m} B_{m}\left|I_{m,j}\left(\omega\right)\right|. 
\end{equation}
Denote $X^{J} \coloneqq \left\{ X_{t}^{\red\left(\omega,L_{1}\cdots L_{m-1}\right)} : t \in J\right\}$.
By Lemma \ref{inner lemma : goodness} we have
\begin{align*}
\left| J \right| 
&\le \sum_{x\in X^{J}} \left|\left\{ t\in I_{m,j}\left(\omega\right):X_{t}^{\red\left(\omega,L_{1}\cdots L_{m-1}\right)}=x\right\} \right| \\
&\le \sum_{x\in X^{J}} \frac{\beta_{m}}{2}\left|I_{m,j}\left(\omega\right)\right| \\
&\le \frac{\beta_{m}}{2}\left|I_{m,j}\left(\omega\right)\right| \left|X^{J} \right|
\end{align*}
and thus by \eqref{eq: J I ratio bound compound} it holds that $\left|X^{J} \right|>2 B_{m}$.

Therefore, we may pick $2 B_{m}$ distinct indices in $J$ such that $X_{i}^{\red\left(\omega,L_{1}\cdots L_{m-1}\right)}$ are different from each other.
Among these, there are at least $B_{m}$ different indices $j_{1},\dots,j_{B_{m}} \in J$ such that 
$\left\{X_{t}^{\omega} : t\in I^{0}_{m-1,j_{i}}\left(\omega\right) \right\}$ 
are disjoint from each other.
For $i\in\left\{1,\dots,B_{m}\right\}$, denote:
\[
t_{i} \coloneqq
i_{\omega,L_{1}\cdots L_{m-1}}\left(j_{i}\right) - i_{\omega,L_{1}\cdots L_{m}}\left(j\right)
\]
\begin{multline*}
\Delta_{i} \coloneqq 
\left| E^{+} \cap \left\{i_{\omega,L_{1}\cdots L_{m}}\left(j\right),\dots,i_{\omega,L_{1}\cdots L_{m-1}}\left(j_{i}\right)\right\} \right| \\ -
\left| E^{-} \cap \left\{i_{\omega,L_{1}\cdots L_{m}}\left(j\right),\dots,i_{\omega,L_{1}\cdots L_{m-1}}\left(j_{i}\right)\right\} \right|
\end{multline*}
Since $\left\{i_{\omega,L_{1}\cdots L_{m}}\left(j\right),\dots,i_{\omega,L_{1}\cdots L_{m-1}}\left(t_{i}\right)\right\} \subseteq I_{m,j}^{0}\left(\omega\right)$,
then by Lemma \ref{inner lemma : goodness}
$\left|\Delta_{i}\right| \le \left|I_{m,j}^{0}\right| \le M_{m}^{\mathrm{upper}}$. 
For the same reason, $0 \le t_{i} \le M_{m}^{\mathrm{upper}}$.
Now, as we noted, we will prove the proposition  by induction.

\paragraph{\textbf{Base case}}
Assume that $m=2$, and define:
\[
\lambda = \left(\left(t_{i},2\Delta_{i}\right):i\in\left\{1,\dots,B_{m}\right\}\right).
\]
First note that indeed $\lambda\in\Lambda_{2}$, since as noted above $0 \le t_{i} \le M_{m}^{\mathrm{upper}}$ and $\left|\Delta_{i}\right|\le M_{m}^{\mathrm{upper}}$. 
Denote $i_{0}=i_{\omega,L_{1}L_{2}}\left(j\right)$, and note that $i_{0}+t_{i}=i_{\omega,L_{1}}\left(j_{i}\right)$. This means that, using the notation of the proposition, 
\[
I'_{t_{i}} = \left\{i_{0}+t_{i},\dots,\next\left(\omega,L_{1},i_{0}+t_{i}\right)\right\}=I_{1,j_{i}}^{0}\left(\omega\right)
.
\]
Now, note that the proposition's required properties for $\lambda$ hold. For any $\left(t_{i},2\Delta_{i}\right)\in\lambda$, i.e any $i\in\left\{1,\dots,B_{m}\right\}$:
\begin{enumerate}
    \item
    Property (1) follows directly from the definition of $\Delta_{i}$
    \item 
    Property (2) follows since $j_{i}\in J$ and  $I'_{t_{i}}=I_{1,j_{i}}^{0}\left(\omega\right)$
    \item 
    Property (3) follows from Lemma \ref{inner lemma : goodness} and property (2).
\end{enumerate}
The disjointness of 
$\left\{X_{t}^{\omega} : t\in I_{t_{i}} \right\}=
\left\{X_{t}^{\omega} : t\in I^{0}_{m-1,j_{i}}\left(\omega\right) \right\}$ 
was guaranteed at the choice of $j_{1},\dots,j_{B_{m}}$, and  the lower bound $L_{1}\cdot\left|\lambda\right|$ on the visited scenery follows from the disjointness and the definition of the intervals $I'_{t_{i}}$.

\paragraph{\textbf{Step case}} 
Assume that $m>2$ and that the proposition holds for $m-1$. 
For each $i\in\left\{1,\dots,B_{m}\right\}$, the proposition for $m-1$ implies that there exists $\lambda_{i}\in\Lambda_{m-1}$ that satisfies the properties listed there. To prove the proposition for $m$, we construct $\lambda\in\Lambda_{m}$ that satisfies the required properties as follows:
\[
\lambda = \left(\left(t+t_{i},\Delta+2\Delta_{i}\right) : i\in\left\{1,\dots,B_{m}\right\},\left(t,\Delta\right)\in\lambda_{i}\right).
\]
Let $i\in\left\{1,\dots,B_{m}\right\}$ and $\left(t,\Delta\right)\in\lambda_{i}$, and denote 
$i_{m} = i_{\omega,L_{1}\cdots L_{m}}\left(j\right)$,  
$i_{m-1} = i_{\omega,L_{1}\cdots L_{m-1}}\left(j_{i}\right)$.
\begin{enumerate}
\item 
Conclusion (1) in the proposition for $m-1$ and $j_{i}$ implies that
\[
\Delta = 
2\left| E^{+} \cap \left\{
                i_{m-1}
        ,\dots,
                i_{m-1}+t
    \right\} \right| - 
2\left| E^{-} \cap \left\{
                i_{m-1}
        ,\dots,
                i_{m-1}+t
    \right\} \right|
,\]
and the definition of $\Delta_{i}$ is equivalent to 
\[
\Delta_{i} = 
\left| E^{+} \cap \left\{ i_{m} ,\dots, i_{m-1} \right\} \right| - 
\left| E^{-} \cap \left\{ i_{m} ,\dots, i_{m-1} \right\} \right| ,
\]
thus we get conclusion (1) in the proposition for $m$ and $j$:

\begin{align*}
\Delta+2\Delta_{j} = &
\left| E^{+} \cap \left\{ i_{m} ,\dots, i_{m-1}+t \right\} \right| - 
\left| E^{-} \cap \left\{ i_{m} ,\dots, i_{m-1}+t \right\} \right| \\ = &
\left| E^{+} \cap \left\{ i_{m} ,\dots, i_{m}+t+t_{i} \right\} \right| - 
\left| E^{-} \cap \left\{ i_{m} ,\dots, i_{m}+t+t_{i} \right\} \right|.
\end{align*}

\item 
Denote the intervals that we get from the proposition for $m-1$ and $j_{i}$ by $I'_{t+t_{i}}$. Then:
\begin{align*}
I'_{t+t_{i}}=&
\left\{i_{m-1}+t,\dots,\next\left(\omega,L_{1},i_{m-1}+t\right)\right\}
\\=&
\left\{i_{m}+t+t_{i},\dots,\next\left(\omega,L_{1},i_{m}+t+t_{i}\right)\right\}
\end{align*}
and since at the proposition for $m-1$ and $j_{i}$ we look at $\left|I_{m-1,j_{i}}^{0}\left(\omega\right)\right|$ with a $E$ density of $\left(1+\frac{1}{m^2}\right) \alpha$ (by the definition of $J$), 
then the density in the sub-intervals $I'_{t+t_{i}}$ satisfies by the conclusion (2) in the proposition:
\begin{align*}
\left|I'_{t+t_{i}}\cap E\right| &< 
	\left(\prod_{r=2}^{m-1}\left(1+\frac{1}{r^2}\right)\right) \left(1+\frac{1}{m^2}\right) \alpha \left|I'_{t+t_{i}}\right| \\&= 
	\left(\prod_{r=2}^{m}\left(1+\frac{1}{r^2}\right)\right) \alpha \left|I'_{t+t_{i}}\right|
\end{align*}
thus we got conclusion (2) in the proposition for $m$ and $j$.
\item 
Conclusion (3) in the proposition for $m$ and $j$ and conclusion (3) in the proposition for $m-1$ and $j_{i}$ both say that $M^{\mathrm{lower}}_{1} \le \left|I'_{j_{i},t}\right| \le M^{\mathrm{upper}}_{1}$.
\end{enumerate}

Also, For any $\left(t'+t_{i_{1}},\Delta'+2\Delta_{i_{1}}\right),\left(t''+t_{i_{2}},\Delta''+2\Delta_{i_{2}}\right)\in\lambda$, if they are different, 
then either $i_{1}=i_{2}$, and then the proposition for the case $m-1$ implies
\[\left\{X^{\omega}_{t} : t\in I'_{t'+t_{i_{1}}} \right\} \cap
\left\{X^{\omega}_{t} : t\in I'_{t''+t_{i_{2}}}\right\} = \emptyset,\] 
or $i_{1}\neq i_{2}$, and then 
\begin{align*}
\left\{X^{\omega}_{t} : t\in I'_{j_{i_{1}},t'}  \right\} \subseteq \left\{X_{t}^{\omega} : t\in I^{0}_{m-1,j_{i_{1}}}\left(\omega\right) \right\} \\
\left\{X^{\omega}_{t} : t\in I'_{j_{i_{2}},t''} \right\} \subseteq \left\{X_{t}^{\omega} : t\in I^{0}_{m-1,j_{i_{2}}}\left(\omega\right) \right\}
\end{align*}
and then our choice of $j_{1},\dots,j_{B_{m}}$ guaranteed that the right hand side sets are disjoint. Additionally, the lower bound $L_{1}\cdot\left|\lambda\right|$ on the visited scenery  follows from the disjointness and the definition of the intervals $I'_{t+i_{j}}$, without resort to induction.
\end{proof}

\subsection{Proof of Theorem \ref{thm:main_theorem}}
\label{section_pf_of_main_thm}
Let $A,B\in\mathbb{N}$ be some parameters that will be chosen later in the proof. 
We consider $L_{m}$, et cetera, to be the same parameters from definitions \ref{def:Lambda} and \ref{defn:sinwalks}. 
Let $k$ be the unique integer such that:
\begin{equation}
\label{eq:defn_of_k}
M^{\mathrm{upper}}_{k} < N \le M^{\mathrm{upper}}_{k+1}.
\end{equation}
Note that a level-$k$ test, as in Definition \ref{def:Lambda}, is also a test over length $N$ as in Definition \ref{def:test}.

Suppose $\omega$ is an infinite walk sequence. 
Regardless of whether $\max_{t<N} \left|X^{\omega}_{t}\right|$ is bigger than $L_{1}\cdots L_{k}$ or not, it can be deduced whether it is in 
$\sinwalks\left(A,B,k\right)$ or not by just looking at its first $M^{\mathrm{upper}}_{k}$ steps. 
In light of this, we define $\sinwalks$ to be the set of $\omega\in\left\{\pm1\right\}^{N}$ such that (1) $\max_{t< M^{\mathrm{upper}}_{k}} \left|X^{\omega}_{t}\right|\ge L_{1}\cdots L_{k}$; and (2) if we truncate $\omega$ at $\next\left(\omega,L_{1}\cdots L_{k},0\right)$ we get 
a walk in $\sinwalks\left(A,B,k\right)$. 
By the former argument, the latter definition means that:
\begin{equation}
\label{eq:prob_of_sinwalks_is_unchanged}
\Pr\left(\omega\in\sinwalks\right) = \Pr\left(\omega\in\sinwalks\left(A,B,k\right)\right).
\end{equation}
For the rest of this proof, let $\omega,\omega',x,x'$ be the ones from the statement of the theorem, truncated at $\next\left(\omega,L_{1}\cdots L_{k},0\right)$.

First, note that there exists $N'_{0}$ such that if $N>N'_{0}$ then for the $k$ from \eqref{eq:defn_of_k}, for any $\omega\in\sinwalks\left(A,B,k\right)$ it holds that:
\begin{equation}
\label{eq:flexible_length_rescaling}
\len\left(\omega\right) > N^{\frac{1}{2}+\theta}.
\end{equation}
Since $M^{\mathrm{upper}}_{k} \le N < M^{\mathrm{upper}}_{k+1}$, and since there exists $k_{0}$ such that for $k\ge k_{0}$, 
$M^{\mathrm{lower}}_{k} > \left(M^{\mathrm{upper}}_{k+1}\right)^{\frac{1}{2}+\theta}$
\footnote{Note that $\frac{1}{2}+\theta$ approaches $1$ from below.}
\footnote{It is sufficient to take $k_{0}=20A$, and doesn't impose conditions in addition to what we already have.}
.

Let us define a few more parameters:
\begin{multline}
\label{eq:parameters_for_prop_existence_of_satisfied_test}
\begin{aligned}
m    = & k
\\ E = & \left\{0\le t < \len\left(\omega\right) : x\left(t\right)\neq x'\left(t\right)\right\}
\\ E^{+} = & \left\{t \in E : \left(\omega\left(t\right),\omega'\left(t\right)\right) = \left(+1,-1\right) \right\}
\\ E^{-} = & \left\{t \in E : \left(\omega\left(t\right),\omega'\left(t\right)\right) = \left(-1,+1\right) \right\}
\\ \sinwalks' = & \sinwalks\left(A,B,k\right)
\\ \Lambda = & \Lambda_{k}\left(A,B,k\right)
.
\end{aligned}
\end{multline}

By \eqref{eq:prob_of_sinwalks_is_unchanged} and Proposition \ref{prop:prob_of_sinuosity}, assuming $A\ge1000,B\ge250,k\ge20A$, it holds that:
\begin{equation}
\label{eq:parameter_p_requirement}
\Pr\left(\omega\in\sinwalks\right) > 
1-\frac{1}{1000A^{4}}.
\end{equation}
Let $\omega$ be in $\sinwalks$. The assumption that $\left|E\right|<\delta\len\left(\omega\right)$ implies that:
\[
\left|\bigcup_{i=1}^{k}\badb_{i,0}\left(\omega\right) \cup E \right| < \left(\delta+\frac{1}{1000A^{4}}\right) \len\left(\omega\right).
\]
By this assumption we use Proposition \ref{prop:existence_of_satisfied_test} with the parameters $m,E,E^{+},E^{-}$, 
and $\alpha_{\mathrm{e}} = \delta+10^{-3} A^{-4}$ in the role of $\alpha$, 
to get an element $\lambda\in\Lambda$ such that the implications from that proposition hold, and set $\lambda_{\mathrm{passed}} = \lambda$.
\subsubsection{Sufficient conditions for Theorem \ref{thm:main_theorem} (1).}
Proposition \ref{prop:existence_of_satisfied_test} says that for any $\left(t,\Delta\right)\in\lambda_{\mathrm{passed}}$:
\begin{enumerate}
\item 
$
\Delta = 
2\left| E^{+} \cap \left\{0,\dots,t\right\} \right| - 
2\left| E^{-} \cap \left\{0,\dots,t\right\} \right| =
X^{\omega}_{t} - X^{\omega'}_{t}
$, 
using \eqref{eq:parameters_for_prop_existence_of_satisfied_test} for the second equality.
\item 
$\left|I'_{t}\cap E\right| < 
	\left(\prod_{r=2}^{m}\left(1+\frac{1}{r^2}\right)\right)\alpha_{\mathrm{e}} \left|I'_{t}\right|$
, where $I'_{t} = \left\{t,\dots,\next\left(\omega,L_{1},t\right)\right\}$
.
\item 
$M^{\mathrm{lower}}_{1} \le \left|I'_{t}\right| \le M^{\mathrm{upper}}_{1}$.
\end{enumerate}

Combining (2) and (3), we get that if:
\begin{equation}
\label{eq:parameter_delta_requirement}
\delta<10^{-3}A^{-4}<\frac{1}{1000M^{\mathrm{upper}}_{1}}
\end{equation}
then:
\[
\alpha_{\mathrm{e}} < 2\cdot10^{-3}A^{-4} < \frac{2}{1000 M^{\mathrm{upper}}_{1}} < \frac{1}{M^{\mathrm{upper}}_{1} \prod_{r=2}^{\infty}\left(1+\frac{1}{r^2}\right)}
\]
and thus $\left|I'_{t}\cap E\right|<1$, and necessarily $I'_{t}\cap E = \emptyset$. Therefore, for each $t\in I'_{t}$, assertion (2) of this theorem holds:
\[
\sigma\left(X^{\omega'}_{t} + \Delta_{k}\right)
=\sigma\left(X^{\omega}_{t}\right)
=x\left(t\right)_{2}
=x'\left(t\right)_{2}.
\]
In the terms of Definition \ref{def:passes_test}, this means that $x'$ passes the test $\lambda$ relative to $\sigma$, and Theorem \ref{thm:main_theorem} (1) holds.
\subsubsection{Sufficient conditions for Theorem \ref{thm:main_theorem} (2) and (3).}
Proposition \ref{prop:existence_of_satisfied_test} with the above parameters implies that:
\[\left|\left\{
X^{\omega}_{t} : t\in I'_{t_{1}} , \left(t_{1},\Delta_{1}\right)\in\lambda
\right\}\right| \ge L_{1} \cdot \left|\lambda\right|\]
which is, by Theorem \ref{thm:main_theorem} (1), equivalent to:
\begin{equation}
\label{eq:reconstructed_scenery}
\left|\lambda\left(\omega'\right)\right|
= \left|\bigcup_{k=1}^{\left|\lambda\right|}\left\{X^{\omega'}_{t}+\Delta_{k}:t\in I'_{k}\right\}\right| 
\ge L_{1} \cdot \left|\lambda\right|
= A \cdot \left|\lambda\right|.
\end{equation}
By Lemma \ref{lem:how_much_psila}, assuming that $B\ge13$, $A\ge20000B$ and $k\ge20A$, and by \eqref{eq:flexible_length_rescaling}:
\begin{align*}
\eqref{eq:reconstructed_scenery} & \ge L_{1}\left|\lambda\right|> \left|\lambda\right|
\ge \left(M^{\mathrm{upper}}_{k}\right)^{\frac{1}{2} - \frac{200}{B}}
\ge \left(\len\left(\omega\right)\right)^{\frac{1}{2} - \frac{200}{B}}
\\&> N^{\left(\frac{1}{2}+\theta\right)\left(\frac{1}{2} - \frac{200}{B}\right)}
> N^{\frac{1}{4} + \frac{\theta}{2} - \frac{200}{B}}
\end{align*}
thus, for Theorem \ref{thm:main_theorem} (2) to hold, we need to require 
\begin{equation}
\label{eq:parameter_theta_requirement}
\frac{1}{4} + \frac{\theta}{2} - \frac{200}{B}>\theta.
\end{equation}

Now, by Proposition \ref{prop:Bound_on_size_of_lambda}, Lemma \ref{lem:how_much_psila} and \eqref{eq:reconstructed_scenery}, 
by assuming that $B\ge16$, $A\ge2^{200}$ and $B\le32A^{\frac{1}{32}}$, and also that $A\ge20000B$ (also $B\ge13$ by these assumptions) we will get:
\begin{align*}
\ln\left|\Lambda_{k}\left(A,B,k\right)\right|
<10^{6}A^{\frac{1}{32}} B_{2}\cdots B_{m}
=10^{6}A^{\frac{1}{32}} \left|\lambda\right|
\le \frac{10^{6}A^{\frac{1}{32}}}{A} \left|\lambda\left(\omega'\right)\right|.
\end{align*}
Thus, for Theorem \ref{thm:main_theorem} (3) to hold, we need to require:
\begin{equation}
\label{eq:parameter_epsilon_requirement}
10^{6} A^{-\frac{31}{32}}<\epsilon
\end{equation}

\subsubsection{Choice of $A,B$}
Let us list the assumptions that we collected in the course of the proof to ensure that Theorem \ref{thm:main_theorem} holds:
\begin{enumerate}
\item 
	$B\ge250$ 	(for Proposition \ref{prop:prob_of_sinuosity}, and also for Proposition \ref{prop:Bound_on_size_of_lambda} and Lemma \ref{lem:how_much_psila})
\item 
	$B>\frac{400}{\frac{1}{2}-\theta}$ 
	(cf. \eqref{eq:parameter_theta_requirement})
\item 
	$A \ge \left(\frac{B}{32}\right)^{32}$ 
		(for Proposition \ref{prop:Bound_on_size_of_lambda})
\item 
	$A > 10^{-\frac{3}{4}} p^{-\frac{1}{4}}$
		(cf. \eqref{eq:parameter_p_requirement})\item 
	$A\ge20000B$
	(for Lemma \ref{lem:how_much_psila})
\item 
    $A > \left(\frac{10^{6}}{\epsilon}\right)^{\frac{32}{31}}$ 
			(cf. \eqref{eq:parameter_epsilon_requirement})
\item 
	$A\ge2^{200}$
	(for Proposition \ref{prop:Bound_on_size_of_lambda}, and also for Proposition \ref{prop:prob_of_sinuosity})
\item 
	$k\ge20A$
	(for Proposition \ref{prop:prob_of_sinuosity} and Lemma \ref{lem:how_much_psila})
\item 
	$k\ge k_{0}$ (condition for \eqref{eq:flexible_length_rescaling})
\item 
	$\delta<10^{-3}A^{-4}$ 
	(cf. \eqref{eq:parameter_delta_requirement}).\end{enumerate}
We may chose values for $A,B$ depending on $p,\theta,\epsilon$ alone, first $B$ such that (1) and (2) hold, and then $A$ such that (3)-(7) hold. 
Now, there exists $N_{0}$ such that if $N>N_{0}$ then (8) and (9) hold (this is the $N_{0}$ of Theorem \ref{thm:main_theorem}, not of Definition \ref{defn:r_all}).
Finally, (10) holds for small enough $\delta$. Thus the proof of Theorem \ref{thm:main_theorem} is complete.

\begin{bibdiv}
\begin{biblist}
\bib{Aaronson}{article}{
   author={Aaronson, Jon},
   title={Relative complexity of random walks in random sceneries},
   journal={Ann. Probab.},
   volume={40},
   date={2012},
   number={6},
   pages={2460--2482},
   issn={0091-1798},
   review={\MR{3050509}},
   doi={10.1214/11-AOP688},
}

\bib{alon_spencer}{book}{
   author={Alon, Noga},
   author={Spencer, Joel H.},
   title={The probabilistic method},
   series={Wiley Series in Discrete Mathematics and Optimization},
   edition={4},
   publisher={John Wiley \& Sons, Inc., Hoboken, NJ},
   date={2016},
   pages={xiv+375},
   isbn={978-1-119-06195-3},
   review={\MR{3524748}},
}

\bib{Austin}{article}{
	author = {{Austin}, T.},
    title = {Scenery entropy as an invariant of RWRS processes},
	journal = {ArXiv e-prints},
	archivePrefix = {arXiv},
	eprint = {1405.1468},
	primaryClass = {math.DS},
	keywords = {Mathematics - Dynamical Systems, Mathematics - Probability, 37A35, 37A50 (primary), 60F05, 60F17 (secondary)},
    year = {2014},
    month = {may},
	adsurl = {http://adsabs.harvard.edu/abs/2014arXiv1405.1468A},
	adsnote = {Provided by the SAO/NASA Astrophysics Data System}
}

\bib{benjamini_kesten}{article}{
   author={Benjamini, Itai},
   author={Kesten, Harry},
   title={Distinguishing sceneries by observing the scenery along a random
   walk path},
   journal={J. Anal. Math.},
   volume={69},
   date={1996},
   pages={97--135},
   issn={0021-7670},
   review={\MR{1428097}},
   doi={10.1007/BF02787104},
}

\bib{denHollander_Steif_kalikowism}{article}{
   author={den Hollander, Frank},
   author={Steif, Jeffrey E.},
   title={Mixing properties of the generalized $T,T^{-1}$-process},
   journal={J. Anal. Math.},
   volume={72},
   date={1997},
   pages={165--202},
   issn={0021-7670},
   review={\MR{1482994}},
   doi={10.1007/BF02843158},
}

\bib{denHollander_Steif_review}{article}{
   author={den Hollander, Frank},
   author={Steif, Jeffrey E.},
   title={Random walk in random scenery: a survey of some recent results},
   conference={
      title={Dynamics \& stochastics},
   },
   book={
      series={IMS Lecture Notes Monogr. Ser.},
      volume={48},
      publisher={Inst. Math. Statist., Beachwood, OH},
   },
   date={2006},
   pages={53--65},
   review={\MR{2306188}},
   doi={10.1214/lnms/1196285808},
}

\bib{harris_keane_background}{article}{
   author={Harris, Matthew},
   author={Keane, Michael},
   title={Random coin tossing},
   journal={Probab. Theory Related Fields},
   volume={109},
   date={1997},
   number={1},
   pages={27--37},
   issn={0178-8051},
   review={\MR{1469918}},
   doi={10.1007/s004400050123},
}

\bib{feller}{book}{
   author={Feller, William},
   title={An introduction to probability theory and its applications. Vol.
   I},
   series={Third edition},
   publisher={John Wiley \& Sons, Inc., New York-London-Sydney},
   date={1968},
   pages={xviii+509},
   review={\MR{0228020}},
}

\bib{guruswami}{inproceedings}{
  title={Bridging Shannon and Hamming: List Error-Correction with Optimal Rate},
  author={Guruswami, Venkatesan},
  date={2011}
}

\bib{Hoeffding}{article}{
   author={Hoeffding, Wassily},
   title={Probability inequalities for sums of bounded random variables},
   journal={J. Amer. Statist. Assoc.},
   volume={58},
   date={1963},
   pages={13--30},
   issn={0162-1459},
   review={\MR{0144363}},
}

\bib{Kalikow}{article}{
   author={Kalikow, Steven Arthur},
   title={$T,\,T^{-1}$\ transformation is not loosely Bernoulli},
   journal={Ann. of Math. (2)},
   volume={115},
   date={1982},
   number={2},
   pages={393--409},
   issn={0003-486X},
   review={\MR{647812}},
   doi={10.2307/1971397},
}

\bib{den_hollander_keane}{article}{
   author={Keane, M.},
   author={den Hollander, W. Th. F.},
   title={Ergodic properties of color records},
   journal={Phys. A},
   volume={138},
   date={1986},
   number={1-2},
   pages={183--193},
   issn={0378-4371},
   review={\MR{865242}},
   doi={10.1016/0378-4371(86)90179-2},
}

\bib{lowe_matzinger_2002_background}{article}{
   author={L\"{o}we, Matthias},
   author={Matzinger, Heinrich, III},
   title={Scenery reconstruction in two dimensions with many colors},
   journal={Ann. Appl. Probab.},
   volume={12},
   date={2002},
   number={4},
   pages={1322--1347},
   issn={1050-5164},
   review={\MR{1936595}},
   doi={10.1214/aoap/1037125865},
}

\bib{lowe_matzinger_2003_background}{article}{
   author={L\"{o}we, Matthias},
   author={Matzinger, Heinrich, III},
   title={Reconstruction of sceneries with correlated colors},
   journal={Stochastic Process. Appl.},
   volume={105},
   date={2003},
   number={2},
   pages={175--210},
   issn={0304-4149},
   review={\MR{1978654}},
   doi={10.1016/S0304-4149(03)00003-6},
}

\bib{lember_matzinger_background}{article}{
   author={Lember, J\"{u}ri},
   author={Matzinger, Heinrich},
   title={Information recovery from a randomly mixed up message-text},
   journal={Electron. J. Probab.},
   volume={13},
   date={2008},
   pages={no. 15, 396--466},
   review={\MR{2386738}},
   doi={10.1214/EJP.v13-491},
}

\bib{lowe_matzinger_merkl_background}{article}{
   author={L\"{o}we, Matthias},
   author={Matzinger, Heinrich},
   author={Merkl, Franz},
   title={Reconstructing a multicolor random scenery seen along a random
   walk path with bounded jumps},
   journal={Electron. J. Probab.},
   volume={9},
   date={2004},
   pages={no. 15, 436--507},
   issn={1083-6489},
   review={\MR{2080606}},
   doi={10.1214/EJP.v9-206},
}

\bib{levin_pemantle_peres_background}{article}{
   author={Levin, David A.},
   author={Pemantle, Robin},
   author={Peres, Yuval},
   title={A phase transition in random coin tossing},
   journal={Ann. Probab.},
   volume={29},
   date={2001},
   number={4},
   pages={1637--1669},
   issn={0091-1798},
   review={\MR{1880236}},
   doi={10.1214/aop/1015345766},
}

\bib{lindenstrauss_indistinguishables}{article}{
   author={Lindenstrauss, Elon},
   title={Indistinguishable sceneries},
   journal={Random Structures Algorithms},
   volume={14},
   date={1999},
   number={1},
   pages={71--86},
   issn={1042-9832},
   review={\MR{1662199}},
   doi={10.1002/(SICI)1098-2418(1999010)14:1<71::AID-RSA4>3.0.CO;2-9},
}

\bib{matzinger_reconstruction_three}{article}{
   author={Matzinger, Heinrich},
   title={Reconstructing a three-color scenery by observing it along a
   simple random walk path},
   journal={Random Structures Algorithms},
   volume={15},
   date={1999},
   number={2},
   pages={196--207},
   issn={1042-9832},
   review={\MR{1704344}},
   doi={10.1002/(SICI)1098-2418(199909)15:2<196::AID-RSA5>3.3.CO;2-R},
}

\bib{matzinger_phd}{book}{
   author={Matzinger, Heinrich Felix},
   title={Reconstruction of a one dimensional scenery seen along the path of
   a random walk with holding},
   note={Thesis (Ph.D.)--Cornell University},
   publisher={ProQuest LLC, Ann Arbor, MI},
   date={1999},
   pages={146},
   isbn={978-0599-08539-8},
   review={\MR{2698361}},
}

\bib{matzinger_lember_background}{article}{
   author={Matzinger, Heinrich},
   author={Lember, J\"{u}ri},
   title={Reconstruction of periodic sceneries seen along a random walk},
   journal={Stochastic Process. Appl.},
   volume={116},
   date={2006},
   number={11},
   pages={1584--1599},
   issn={0304-4149},
   review={\MR{2269217}},
   doi={10.1016/j.spa.2006.03.006},
}

\bib{matzinger_rolles_2003a_background}{article}{
   author={Matzinger, Heinrich},
   author={Rolles, Silke W. W.},
   title={Reconstructing a piece of scenery with polynomially many
   observations},
   journal={Stochastic Process. Appl.},
   volume={107},
   date={2003},
   number={2},
   pages={289--300},
   issn={0304-4149},
   review={\MR{1999792}},
   doi={10.1016/S0304-4149(03)00085-1},
}

\bib{matzinger_rolles_2003b_background}{article}{
   author={Matzinger, Heinrich},
   author={Rolles, Silke W. W.},
   title={Reconstructing a random scenery observed with random errors along
   a random walk path},
   journal={Probab. Theory Related Fields},
   volume={125},
   date={2003},
   number={4},
   pages={539--577},
   issn={0178-8051},
   review={\MR{1974414}},
   doi={10.1007/s00440-003-0257-3},
}

\bib{matzinger_rolles_2006_background}{article}{
   author={Matzinger, Heinrich},
   author={Rolles, Silke W. W.},
   title={Retrieving random media},
   journal={Probab. Theory Related Fields},
   volume={136},
   date={2006},
   number={3},
   pages={469--507},
   issn={0178-8051},
   review={\MR{2257132}},
   doi={10.1007/s00440-005-0489-5},
}

\bib{Meilijson}{article}{
   author={Meilijson, Isaac},
   title={Mixing properties of a class of skew-products},
   journal={Israel J. Math.},
   volume={19},
   date={1974},
   pages={266--270},
   issn={0021-2172},
   review={\MR{0372158}},
   doi={10.1007/BF02757724},
}

\bib{ornstein}{article}{
  title={An example of a Kolmogorov automorphism that is not a Bernoulli shift},
  author={Ornstein, Donald S.},
  journal={Advances in Mathematics},
  date={1973},
  volume={10},
  pages={49-62}
}

\bib{revesz_RW}{book}{
   author={R\'ev\'esz, P\'al},
   title={Random walk in random and non-random environments},
   edition={2},
   publisher={World Scientific Publishing Co. Pte. Ltd., Hackensack, NJ},
   date={2005},
   pages={xvi+380},
   isbn={981-256-361-X},
   review={\MR{2168855}},
   doi={10.1142/9789812703361},
}

\bib{rudolph}{article}{
  title={Asymptotically Brownian skew products give non-loosely Bernoulli K-automorphisms},
  author={Rudolph, Daniel J.},
  journal={Inventiones mathematicae},
  date={1988},
  volume={91},
  pages={105-128}
}

\bib{benjiweiss}{article}{
  title={The isomorphism problem in ergodic theory},
  author={Weiss, Benjamin},
  journal={Bulletin of the American Mathematical Society},
  date={1972},
  volume={78},
  pages={668-684}
}

\end{biblist}
\end{bibdiv}
\end{document}